\documentclass[a4paper,12pt,reqno]{amsart}
\usepackage{graphicx}

\usepackage{color}

\usepackage{amsmath,amstext,amssymb,amsopn,amsthm}
\usepackage{url}

\usepackage[margin=30mm]{geometry}
\usepackage{eucal,mathrsfs,dsfont}
\usepackage{soul}

\newtheorem{theorem}{Theorem}[section]
\newtheorem{corollary}[theorem]{Corollary}
\newtheorem{lemma}[theorem]{Lemma}
\newtheorem{proposition}[theorem]{Proposition}

\theoremstyle{definition}

\newtheorem{definition}[theorem]{Definition}

\theoremstyle{remark}
\newtheorem{remark}[theorem]{Remark}
\newtheorem{example}[theorem]{Example}


\definecolor{mr}{rgb}{0.1,0.2,0.7}



\newcommand{\calB}{\mathcal{B}}

\newcommand{\WUSC}[3]{\textrm{\rm WUSC}(#1,#2,#3)}
\newcommand{\WLSC}[3]{\textrm{\rm WLSC}(#1,#2,#3)}
\newcommand{\lC}{{\underline{C}}}
\newcommand{\uC}{{\overline{C}}}
\newcommand{\la}{{\underline{\alpha}}}
\newcommand{\ua}{{\overline{\alpha}}}

\newcommand{\R}{\mathds{R}}

\newcommand{\N}{{\mathds{N}}}

\newcommand{\RR}{\mathrm{I\kern-0.20emR}}
\newcommand{\D}{\mathrm{d}\kern0.2pt}

\newcommand{\p}{\mathbb{P}}

\DeclareMathOperator{\dist}{dist}



\title[Gradient estimates of Dirichlet heat kernels for L{\'e}vy processes]{Gradient estimates of Dirichlet heat kernels for unimodal L{\'e}vy processes}
\author[T. Kulczycki]{Tadeusz Kulczycki}
\author[M. Ryznar]{Micha{\l} Ryznar}
\thanks{T. Kulczycki was supported in part by the National Science Centre, Poland, grant no. 2015/17/B/ST1/01233, M. Ryznar was supported in part by the National Science Centre, Poland, grant no. 2015/17/B/ST1/01043}
\address{Faculty of Pure and Applied Mathematics, Wroc{\l}aw University of Science and Technology, Wyb. Wyspia{\'n}skiego 27, 50-370 Wroc{\l}aw, Poland.}
\email{Tadeusz.Kulczycki@pwr.edu.pl}
\email{Michal.Ryznar@pwr.edu.pl}

\pagestyle{headings}

\begin{document}
\begin{abstract} Under some mild assumptions on the L{\'e}vy measure and the symbol we obtain gradient estimates of Dirichlet heat kernels for pure-jump isotropic unimodal L{\'e}vy processes in $\R^d$.
\end{abstract}

\maketitle

\section{Introduction}

The Dirichlet heat kernels for L{\'e}vy processes have been intensively studied in recent years. Qualitatively sharp estimates for classical Dirichlet heat kernels for the Brownian motion were established in 2002 by Zhang \cite{Z2002} for $C^{1,1}$ domains and in 2003 by 
Varopoulos \cite{V2003} for Lipschitz domains. Upper bound for the Dirichlet heat kernels for the isotropic stable processes were given in 2006 by Siudeja \cite{S2006} for convex sets. He used some ideas from \cite{KS2006}. In 2008 Chen, Kim and Song \cite{CKS2008} obtained sharp, two sided estimates for the Dirichlet heat kernels for the isotropic stable process for $C^{1,1}$ open sets. In 2009 Bogdan, Grzywny and Ryznar \cite{BGR2009} showed similar results for $\kappa$-fat open sets. Gradual extensions were then obtained for some subordinate Brownian motions \cite{CKS2012,CKS2014,CKS2012a}, for L{\'e}vy processes with comparable L{\'e}vy measure \cite{KSV2014} and unimodal L{\'e}vy processes satisfying some scaling conditions \cite{BGR2014a}.

The aim of this paper is to obtain gradient estimates of Dirichlet heat kernels for unimodal L{\'e}vy processes whose  symbols
satisfy some scaling conditions and L{\'e}vy measures satisfy some regularity conditions. The main result is the following theorem.
\begin{theorem}
\label{main1}
Let $X = (X_t, t \ge 0)$ be a pure-jump isotropic L{\'e}vy process in $\R^d$ with the characteristic exponent $\psi$, which satisfies WLSC($\underline{\alpha},\theta_0,\underline{C}$) and WUSC($\overline{\alpha},\theta_0,\overline{C}$) for some $\underline{\alpha} > 0$, $\overline{\alpha} \in (0,2)$, $\theta_0 \ge 0$, and $\underline{C}, \overline{C}  > 0$. We assume that the L{\'e}vy measure of $X$ is infinite and has the strictly positive density $\nu(x) = \nu(|x|)$, $\nu(r)$ is nonincreasing, absolutely continuous such that $-\nu'(r)/r$ is nonincreasing,  it satisfies $\nu(r) \le a \nu(r+1), r \ge 1$ for some constant $a$.

Let $D \subset \R^d$ be an open, nonempty set and $p_D(t,x,y)$ be the Dirichlet heat kernel for $X$ on $D$. Then $\nabla_x \, p_D(t,x,y)$ exists for any $x, y \in D$, $t > 0$ and we have
\begin{equation}
\label{maineq}
|\nabla_x \, p_D(t,x,y)| \le c \left[\frac {1}{\delta_D(x)\wedge1} \vee \psi^{-}(1/t)\right] p_{D}(t, x, y), \quad \quad x, y \in D, \,\,  t \in (0,1],
\end{equation} 
where $c = c(d,\psi)$, $\delta_D(x) = {\dist(x, D^c)}$ and $\psi^-$ denotes the generalized inverse of $\psi^*(r)=sup_{\rho \in [0,r]}\psi(\rho)$.
\end{theorem}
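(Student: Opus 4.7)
The plan is to split the argument into a free heat-kernel gradient estimate followed by a Markov transfer to the Dirichlet setting.

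\textbf{Step 1 (free gradient estimate).} First I would prove the pointwise bound
\[
|\nabla_x p(t,x,y)| \le c\,\psi^{-}(1/t)\, p(t,x,y), \qquad t \in (0,1],
\]
for the transition density $p$ of the unrestricted process $X$ on $\R^d$. By isotropy, $p(t,\cdot)$ is radial, so $\nabla p(t,x) = (x/|x|)\,\partial_r p(t,|x|)$ and the task reduces to estimating $|\partial_r p(t,r)|$. The scaling hypotheses WLSC/WUSC together with the two-sided heat-kernel estimates available under them (as in Bogdan--Grzywny--Ryznar), combined with the extra regularity ``$-\nu'(r)/r$ nonincreasing'', should yield the bound after differentiating either the Fourier representation of $p$ or its series expansion in convolutions of $\nu$.

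\textbf{Step 2 (localisation with a fixed ball).} Fix $x,y \in D$ and $t \in (0,1]$, and set
\[
r := \tfrac12\bigl(\delta_D(x)\wedge 1 \wedge \psi^{-}(1/t)^{-1}\bigr), \qquad B := B(x,r) \subset D.
\]
By the strong Markov property at $\tau_B$ together with the Ikeda--Watanabe formula, for every $x' \in B$,
\begin{equation*}
p_D(t,x',y) = p_B(t,x',y) + \int_0^t\!\!\int_B p_B(s,x',w)\!\int_{D\setminus B} p_D(t-s,z,y)\,\nu(|z-w|)\,dz\,dw\,ds.
\end{equation*}
Because the ball $B$ is held fixed (it does not depend on $x'$), differentiating at $x' = x$ puts all derivatives onto $p_B(\cdot,x,\cdot)$. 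The task is thereby reduced to a gradient estimate for $p_B$ at its centre and control of the remaining integral in terms of $p_D(t,x,y)$.

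\textbf{Step 3 (ball gradient estimate and assembly).} For the ball I would use the Hunt formula
\[
p_B(s,x,w) = p(s,x-w) - \E^x\bigl[p(s-\tau_B,\,X_{\tau_B}-w);\,\tau_B < s\bigr],
\]
differentiate under the expectation, and combine Step 1 with the Ikeda--Watanabe density for $X_{\tau_B}$ to obtain
\[
|\nabla_x p_B(s,x,w)| \le c\bigl(\psi^{-}(1/s) \vee 1/r\bigr)\,p_B(s,x,w)
\]
for $x$ near the centre of $B$. Plugging this into the differentiated localisation identity and reassembling the Ikeda--Watanabe integral into $p_D(t,x,y)$ produces a right-hand side of the form $c(1/r \vee \psi^{-}(1/t))\,p_D(t,x,y)$; the choice of $r$ made in Step 2 then matches the two regimes $1/(\delta_D(x)\wedge 1)$ and $\psi^{-}(1/t)$ in the claimed inequality.

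\textbf{Main obstacle.} I expect Step 3 to be the hardest: passing $\nabla_x$ under the expectation in the Hunt formula and obtaining a ball gradient estimate whose dependence on the radius is sharp. Pointwise control of $\nabla\nu(|x|)$ is needed, and this is exactly what the hypothesis ``$-\nu'(r)/r$ nonincreasing'' supplies, since a short calculation gives $|\nabla\nu(|x|)| \lesssim \nu(|x|)/|x|$. The tail comparability $\nu(r)\le a\,\nu(r+1)$ for $r\ge 1$ then provides the unit-scale uniform control needed to dominate the integrand in a neighbourhood of $x$, legitimising the interchange of differentiation and integration. Keeping all constants independent of the domain $D$ throughout the bootstrap is the subtle bookkeeping point.
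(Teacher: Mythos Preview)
Your localisation strategy is natural, but Step~3 contains a genuine gap that is not just bookkeeping. After differentiating the Ikeda--Watanabe decomposition you arrive at
\[
\nabla_x k_D(t,x,y)=\int_0^t\!\int_B \nabla_x p_B(s,x,w)\,g_s(w)\,dw\,ds, \qquad g_s(w)=\int_{D\setminus B}\nu(w-z)\,p_D(t-s,z,y)\,dz,
\]
and you propose to insert $|\nabla_x p_B(s,x,w)|\le c\bigl(\psi^{-}(1/s)\vee 1/r\bigr)p_B(s,x,w)$. The problem is the short-time regime: as $s\to 0$ one has $p_B(s,x,\cdot)\to\delta_x$, hence $\int_B p_B(s,x,w)\,g_s(w)\,dw\to g_0(x)>0$, while $\psi^{-}(1/s)\approx [V^{-1}(\sqrt s)]^{-1}$ blows up. Since $\int_0^\varepsilon[V^{-1}(\sqrt s)]^{-1}\,ds$ diverges whenever the lower scaling index satisfies $\underline\alpha\le 1$ (already for isotropic $\alpha$-stable processes with $\alpha\le 1$), the right-hand side is $+\infty$ and the argument does not close. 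Your choice of $r$ ties $1/r$ to $\psi^{-}(1/t)$, but it cannot control $\psi^{-}(1/s)$ uniformly for $s\in(0,t)$. A secondary issue: in the Hunt formula $p_B(s,x,w)=p_s(x-w)-\E^x[\dots]$ the law $P^x$ itself depends on $x$, so ``passing $\nabla_x$ under $\E^x$'' is not a mere interchange of limit and integral; one must first switch the base point to $w$ (via symmetry of $p_B$) before differentiating, as the paper does in Lemma~\ref{existence}.

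The paper circumvents the $s\to 0$ singularity by never differentiating the localisation identity. It fixes $B=B(0,r)$ with $r=\delta_D(0)\wedge 1$ and estimates the \emph{symmetric difference} $p_D(t,x,y)-p_D(t,\hat x,y)$ for $x=|x|e_1$, $\hat x=(-x_1,x_2,\dots,x_d)$. The key observation (Proposition~\ref{m_estimate}) is that reflection symmetry of $B$, after the change of variables $w\mapsto \hat w$, gives
\[
k_D(t,x,y)-k_D(t,\hat x,y)=\int_{B_+}\!\int_0^t\bigl(p_B(s,x,w)-p_B(s,\hat x,w)\bigr)\bigl(g_s(w)-g_s(\hat w)\bigr)\,ds\,dw,
\]
so \emph{both} factors are differences. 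The Lévy-measure regularity yields $|g_s(w)-g_s(\hat w)|\le c\,(|w|/r)\,g_s(w)$, while Lemmas~\ref{lem3}--\ref{lem5} give $p_B(s,x,w)-p_B(s,\hat x,w)\le c\,(|x-\hat x|/|w|)\,p_B(s,x,w)$ for $w$ near the origin; the factors $|w|$ and $1/|w|$ cancel, producing $c\,|x-\hat x|/r$ times the original integrand, \emph{uniformly in $s$}. Dividing by $|x-\hat x|$ and letting $x\to 0$ then yields the gradient bound. This double-difference cancellation is the missing ingredient in your scheme; the derivative estimates for $p_t$ in your Step~1 are indeed proved in the paper (Lemma~\ref{derestimates1}, via a $(d+2)$-dimensional lift), but they enter only through these difference bounds, not through a direct differentiation of the exit-time integral.
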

The notation used in the formulation of the above theorem is explained in Preliminaries.

By the semigroup property one easily gets
\begin{corollary}
\label{largetime}
If $X$ and $D$ satisfy assumptions of Theorem \ref{main1} then we have
\begin{equation*}
|\nabla_x \, p_D(t,x,y)| \le c \left[\frac {1}{\delta_D(x)\wedge1} \vee \psi^{-}(1)\right] p_{D}(t, x, y), \quad \quad x, y \in D, \,\,  t \in [1,\infty),
\end{equation*} 
where $c = c(d,\psi)$.
\end{corollary}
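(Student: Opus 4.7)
The strategy is exactly the one indicated in the statement: reduce the estimate for $t\in[1,\infty)$ to the estimate of Theorem \ref{main1} at time $t=1$ by splitting off one unit of time via the Chapman--Kolmogorov identity for the Dirichlet heat kernel. No scaling of $\psi^-$ is needed, since $\psi^-(1)=\psi^-(1/t)$ for $t=1$, and $t=1$ is exactly the time slice on which Theorem \ref{main1} will be applied.

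Concretely, for $t>1$ I would start from
\begin{equation*}
p_D(t,x,y) \;=\; \int_D p_D(1,x,z)\, p_D(t-1,z,y)\,\D z
\end{equation*}
and differentiate in $x$ under the integral sign. Theorem \ref{main1} applied at time $1$ yields, for every $z\in D$,
\begin{equation*}
|\nabla_x p_D(1,x,z)| \;\le\; c\left[\frac{1}{\delta_D(x)\wedge 1} \vee \psi^-(1)\right] p_D(1,x,z).
\end{equation*}
The bracketed factor is independent of $z$, so pulling it out and applying Chapman--Kolmogorov in reverse reconstitutes $p_D(t,x,y)$ in the remaining integral, producing the claimed inequality. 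The case $t=1$ is just Theorem \ref{main1} itself.

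The only point requiring a word of justification is the interchange of $\nabla_x$ and the integral. This is routine: on a small neighbourhood of any fixed $x\in D$ the quantity $\delta_D(\cdot)$ is bounded below, so Theorem \ref{main1} provides a bound on $|\nabla_x p_D(1,x,z)|$ that, uniformly in $x$ in that neighbourhood, is dominated by a constant multiple of $p_D(1,x,z)$; since
\begin{equation*}
\int_D p_D(1,x,z)\, p_D(t-1,z,y)\,\D z \;=\; p_D(t,x,y) \;<\; \infty,
\end{equation*}
the dominated convergence theorem applies. I do not foresee any substantive obstacle beyond this, and the whole argument is short; the only choice being made is to cut the time interval at $1$ rather than at $t/2$, which ensures that the constant emerging from Theorem \ref{main1} is exactly $\psi^-(1)$ as required.
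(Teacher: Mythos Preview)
Your proposal is correct and is precisely the argument the paper has in mind: the authors give no proof at all, merely remarking ``By the semigroup property one easily gets'' the corollary, which is exactly your Chapman--Kolmogorov splitting at time $1$ followed by Theorem \ref{main1}. The only small imprecision is in your domination step---the bound from Theorem \ref{main1} involves $p_D(1,x',z)$ rather than $p_D(1,x,z)$ as $x'$ varies---but this is harmless since $p_D(1,x',z)\le p_1(0)<\infty$ uniformly, and $\int_D p_D(t-1,z,y)\,\D z\le 1$, so a constant dominating function works.
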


Analogous gradient estimates of the classical Dirichlet heat kernel were obtained in 2006 by Zhang \cite[Theorem 2.1]{Z2006}. Note that Zhang's estimates have different shape than estimates obtained in Theorem \ref{main1}. Namely, in Zhang's estimate there is an additional term $ |x-y|/\sqrt{t}$. This is caused by a different behaviour of Dirichlet heat kernels for pure-jump L{\'e}vy processes and the classical one.  

The Dirichlet semigroup of the process $X$ on the domain $D \subset \R^d$ is given by 
$$
P_t^D f(x) = \int_D p_D(t,x,y) f(y) \, dy, \quad \quad x \in D, \, t > 0.
$$

Directly from Theorem \ref{main1} we get
\begin{corollary}
\label{parabolic}
Let $X$, $D$ satisfy assumptions of Theorem \ref{main1} and $f$ be nonnegative, measurable and bounded on $\R^d$. Then $\nabla_x \, P_t^D f(x)$ exists for any $t > 0$, $x \in D$ and we have
\begin{equation*}
|\nabla_x \, P_t^D f(x)| \le c \left[\frac {1}{\delta_D(x)\wedge1} \vee \psi^{-}(1/t)\right] P_t^D f(x), \quad \quad x, y \in D, \,\,  t \in (0,1],
\end{equation*} 
where $c = c(d,\psi)$.
\end{corollary}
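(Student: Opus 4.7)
The plan is to differentiate $P_t^D f(x) = \int_D p_D(t,x,y) f(y)\,dy$ with respect to $x$ under the integral sign and then invoke the pointwise estimate \eqref{maineq} on the integrand. Once the interchange is justified, the claimed bound follows immediately by pulling the $y$-independent bracket $c[(\delta_D(x)\wedge 1)^{-1} \vee \psi^{-}(1/t)]$ out of the integral and recognizing $\int_D p_D(t,x,y) f(y)\,dy$ on the right.

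To justify the interchange at a fixed $x_0 \in D$ and $t \in (0,1]$, I choose $r < \delta_D(x_0)/2$ so that $\delta_D(x) \ge \delta_D(x_0)/2$ for every $x \in B(x_0, r)$. By Theorem~\ref{main1}, $\nabla_x p_D(t,x,y)$ exists throughout $B(x_0, r) \times D$, and uniformly in $x \in B(x_0,r)$,
\begin{equation*}
|\nabla_x p_D(t, x, y)| \le c M_0 \, p_D(t, x, y), \qquad M_0 := \tfrac{2}{\delta_D(x_0) \wedge 1} \vee \psi^{-}(1/t).
\end{equation*}
For $|h| < r$ the fundamental theorem of calculus gives
\begin{equation*}
p_D(t, x_0+h, y) - p_D(t, x_0, y) = \int_0^1 \nabla_x p_D(t, x_0+sh, y) \cdot h\, ds,
\end{equation*}
and combining this with the bound above and the trivial inequality $p_D(t,x,y) \le p(t, x-y)$ (with $p(t,\cdot)$ the free transition density of $X$), the integrand of the difference quotient is dominated by $c M_0 \|f\|_\infty \sup_{|z| < r} p(t, x_0 + z - y)$. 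Isotropic unimodality makes $p(t,\cdot)$ radially nonincreasing, so this supremum is an $L^1(\R^d)$ function of $y$ independent of $h$.

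Dominated convergence therefore justifies the interchange and yields
\begin{equation*}
\nabla_x P_t^D f(x_0) = \int_D \nabla_x p_D(t, x_0, y) f(y)\, dy,
\end{equation*}
and applying \eqref{maineq} inside this integral produces the stated estimate. The only delicate ingredient is the differentiation under the integral, and the gradient estimate of Theorem~\ref{main1} combined with isotropic unimodality is exactly what supplies the required uniform dominating function; no further input is needed.
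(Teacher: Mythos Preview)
Your argument is correct and matches the paper's approach: the paper simply says the corollary follows ``directly from Theorem~\ref{main1}'' and gives no further proof, and you have filled in the only nontrivial step, namely the justification of differentiation under the integral via a dominating function built from the gradient bound and the unimodality of $p_t$. One small remark: the corollary asserts existence of $\nabla_x P_t^D f(x)$ for \emph{all} $t>0$, while your argument as written treats $t\in(0,1]$; the case $t>1$ follows at once by writing $P_t^D f = P_1^D(P_{t-1}^D f)$ and applying your argument to the bounded nonnegative function $P_{t-1}^D f$.
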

Note that we have a simple estimate $P_t^D f(x) \le P^x(\tau_D > t) \|f\|_\infty$ and for bounded, sufficiently smooth ($C^{1,1}$) domains $D$ sharp estimates of $P^x(\tau_D > t)$ are known \cite[Theorem 4.5]{BGR2014a}.

Gradient estimates of $P_t^D f$ for Dirichlet semigroups corresponding to diffusion processes $X$ whose generators are second-order elliptic operators have been intensively studied see e.g. \cite{W2004, FMP2004}, \cite[Chapter 3]{L1995}. The main motivation of such estimates comes from well-known connections with stochastic differential equations see e.g. \cite{H1980, V1980}.

Estimates of $|\nabla_x \, P_t f(x)|$, where $\{P_t\}_{t > 0}$ is the semigroup of a free L{\'e}vy process were obtained in 2012 by Schilling, Sztonyk, Wang \cite[Theorem 1.3]{SSW2012}. $P_t f$ is given by $P_t f(x) = \int p_t(x-y) f(y) \, dy$, where $p_t(x)$ is the transition density of the process $X$.  Interesting estimates of derivatives of $p_t$ were obtained in \cite{SSW2012} and \cite{KS2015}. The gradient estimates $|\nabla_x \, P_t f(x)|$ from \cite{SSW2012} were applied to study stochastic differential equations driven by L{\'e}vy processes in \cite{M2015}. 

In 2011 estimates of $\|D P_tf\|$ were obtained by Priola and Zabczyk in \cite[Theorem 4.14, (5.19)]{PZ2011} for the semigroup $\{P_t\}_{t > 0}$ of the solution to a nonlinear stochastic partial differential equation
$$
d \, X_t = A X_t \, dt + F(X_t) \, dt + d Z_t
$$
in a real seperable Hilbert space driven by an infinite dimensional, cylindrical stable process $Z$ ($D$ is  here a Fr{\'e}chet derivative). The results of that paper apply to stochastic heat equation with Dirichlet boundary conditions, see \cite[Example 5.5]{PZ2011}. It is also worth  pointing  out that Da Prato, Goldys and Zabczyk in \cite[Corollary on page 437]{DGZ1997} obtained estimates of $|D P_t^{\mathcal{O}} f(x)|$ where $\{P_t^{\mathcal{O}}\}_{t > 0}$ is the Dirichlet semigroup of the process $X$ on ${\mathcal{O}}$,  $X$ is the solution of a linear stochastic equation
$$
d \, X = A X \, dt + d\, W
$$
in a separable Hilbert space $H$ driven by a Wiener process $W$ on $H$, and ${\mathcal{O}}$ is an open subset of $H$.

The paper is organized as follows. In Section 2 we introduce the notation and collect known facts  needed in the sequel. In Section 3 we provide  some auxiliary estimates of the heat kernel and the L{\'e}vy measure. The main section of the paper is Section 4 in which we prove Theorem \ref{main1}. The next section contains examples of processes which satisfy assumptions of Theorem \ref{main1}. In Appendix we provide a proof of Theorem \ref{hk_kula2} which is an extension of Theorem 4.5 in \cite{BGR2014a}.

\section{Preliminaries}

In the whole paper we  use a convention that for a radial function $f:\R^d \to \R$ we  write $f(x) = f(r)$, if $x \in \R^d$ and $|x| = r$. All constants appearing in this paper are positive and finite. We write $\kappa = \kappa(a,\ldots,z)$ to emphasize that $\kappa$ depends only on $a,\ldots,z$. We adopt the convention that constants denoted by $c$ (or $c_1$, $c_2$) may change their value from one use to another. We write $f(x) \approx g(x)$ for $x \in A$ and say $f$ and $g$ are comparable for $x \in A$ if $f,g \ge 0$ on $A$ and there is a number $c \ge 1$, called comparability constant, such that $c^{-1} f(x) \le g(x) \le c f(x)$ for $x \in A$. For $x \in \R^d$ and $r > 0$ we let $B(x,r) = \{y \in \R^d: \, |y - x| < r\}$. We denote $a \wedge b = \min(a,b)$ and $a \vee b = \max(a,b)$ for $a, b \in \R$. When $D \subset \R^d$ is an open set we denote by $\mathcal{B}(D)$ a family of Borel subsets of $D$.

A Borel measure on $\R^d$ is called isotropic unimodal if on $\R^d \setminus \{0\}$ it is absolutely continuous with respect to the Lebesgue measure and has a finite, radial, radially nonincreasing density function. Such measures may have an atom at the origin. A L{\'e}vy process $X=(X_t, t \ge 0)$ is called isotropic unimodal if its transition probability $p_t(dx)$ is isotropic unimodal for all $t > 0$. Unimodal isotropic pure-jump L\'evy processes are characterized \cite{W1983} by unimodal isotropic L{\'e}vy measures $\nu(dx) = \nu(x) \, dx = \nu(|x|) \, dx$. 

The characteristic exponent of $X$ is given by
$$
\psi(\xi) = \int_{\R^d} \left(1 - \cos\langle\xi,x\rangle\right) \, \nu(dx), \quad \xi \in \R^d.
$$

In the whole paper we assume that $X$ is a pure-jump isotropic unimodal L{\'e}vy process in $\R^d$ with the characteristic exponent $\psi$ and that the L{\'e}vy measure of $X$ is infinite. We also assume that the following Hartman-Wintner condition holds
\begin{equation}
\label{logpsi}
\lim_{|x| \to \infty} \frac{\psi(x)}{\log |x|} = \infty.
\end{equation}
This guarantees that for any $t > 0$,   $p_t(dx)$ has a radial, radially nonincreasing density function $p_t(x)$, which
 is bounded and smooth on $\R^d$.


The derivative $\nu'(r)$ is understood as a function (defined a.e. on $(0, \infty)$) such that  $\nu(r) = -\int_r^\infty\nu'(\rho) d\rho,\,  r>0 $.
In fact, under the assumption that $-\nu'(r)/r$ is nonincreasing on the set where it is
defined, we can always take a version which is well defined for each point $r > 0$ and
$-\nu'(r)/r$ is nonincreasing on $(0, \infty)$. Throughout the whole paper we use that meaning
of $\nu'(r)$.
Note also that if $\nu(r)$ is convex then $-\nu'(r)/r$ is nonincreasing (in the above sense).
 
Now we recall the definition of scaling conditions (cf. \cite{BGR2014}). Let $\varphi$ be a non-negative, non-zero function on $[0,\infty)$. We say that $\varphi$ satisfies {\it{a weak lower scaling condition}} WLSC($\underline{\alpha},\theta_0,\underline{C}$) (and write $\varphi \in \text{WLSC}(\underline{\alpha},\theta_0,\underline{C}$) or $\varphi \in \text{WLSC}$) if there are numbers $\underline{\alpha} > 0$, $\theta_0 \ge 0$ and $\underline{C} > 0$ such that
$$
\varphi(\lambda \theta) \ge \underline{C} \lambda^{\underline{\alpha}} \varphi(\theta), \quad \text{for} \quad \lambda \ge 1, \, \theta \ge \theta_0.
$$
We say that $\varphi$ satisfies {\it{a weak upper scaling condition}} WUSC($\overline{\alpha},\theta_0,\overline{C}$) (and write $\varphi \in \text{WUSC}(\underline{\alpha},\theta_0,\underline{C}$) or $\varphi \in \text{WUSC}$) if there are numbers $\overline{\alpha} \in (0,2)$, $\theta_0 \ge 0$ and $\overline{C} > 0$ such that
$$
\varphi(\lambda \theta) \le \overline{C} \lambda^{\overline{\alpha}} \varphi(\theta), \quad \text{for} \quad \lambda \ge 1, \, \theta \ge \theta_0.
$$
Note that the condition $\psi \in \text{WLSC}(\underline{\alpha},\theta_0,\underline{C})$ implies (\ref{logpsi}).

Recall that the maximal characteristic function is defined by $\psi^*(r)=sup_{\rho \in [0,r]} \psi(\rho)$, $r \ge 0$. We define its generalized inverse $\psi^-: [0,\infty) \to [0,\infty]$ by
$$
\psi^-(x) = \inf\{y \ge 0: \, \psi^*(y) \ge x\}, \quad 0 \le x < \infty,
$$
with the convention that $\inf \emptyset = \infty$.
It is well known \cite[Proposition 2]{BGR2014} (see also \cite[Proposition 1]{G2013})
\begin{equation}
\label{psi*}
\psi(r) \le \psi^*(r) \le \pi^2 \psi(r), \quad r \ge 0.
\end{equation}

In the paper we will use the renewal function $V$ of the properly normalized ascending ladder-height process of $X_t^{(1)}$, where $X_t^{(1)}$ is the first coordinate of $X_t$. The ladder-height process is a subordinator with the Laplace exponent
$$
\kappa(\xi) = \exp\left\{\frac{1}{\pi} \int_0^{\infty} \frac{\log \psi(\xi \zeta)}{1 + \zeta^2} \, d\zeta\right\}, \quad \xi \ge 0,
$$
and $V(x)$ is its potential measure of the half-line $(-\infty,x)$. The Laplace transform of $V$ is given by
$$
\int_0^{\infty} V(x) e^{-\xi x} \, dx = \frac{1}{\xi \kappa(\xi)}, \quad \xi > 0.
$$ 
For a detailed discussion of the properties of $V$ we refer the reader to \cite{S1980}. We have $V(x) = 0$ for $x \le 0$ and $V(\infty):= \lim_{r \to \infty} V(r) = \infty$. $V$ is subadditive, that is 
$$
V(x + y) \le V(x) + V(y), \quad x,y \in \R.
$$
It is known that $V$ is absolutely continuous on $(0,\infty)$ and strictly increasing on $(0,\infty)$. 
We will use $V$ and its inverse function $V^{-1}$ in the estimates of heat kernels. 
By \cite[Proof of Proposition 2.4]{BGR2015} we have
\begin{equation}
\label{Vpsi}
c_1 \psi\left(\frac{1}{r}\right) \le \frac{1}{V^2(r)} \le c_2 \psi\left(\frac{1}{r}\right), \quad r > 0,
\end{equation}
where $c_1$, $c_2$ are absolute constants. It is clear that 
\begin{equation}
\label{Vr}
r < V^{-1}(\sqrt{t}) \iff V^2(r) < t, \quad r, t > 0,
\end{equation}
and 
\begin{equation}
\label{rVd}
r < V^{-1}(\sqrt{t}) \iff \frac{t}{V^2(r) r^d} > [V^{-1}(\sqrt{t})]^{-d}, \quad r, t > 0.
\end{equation}

If $\psi \in \text{WLSC}$ and $\psi \in \text{WUSC}$ then by (\ref{psi*}), (\ref{Vpsi}) we get for any $t \in (0,1]$
\begin{equation}
\label{Vpsi-1}
\frac{1}{V^{-1}(\sqrt{t})} \approx \psi^{-}\left(\frac{1}{t}\right),
\end{equation}
where the comparability constant depends only on $\psi$.

By (\ref{Vpsi}), \cite[(1.9)]{BGR2014a}, \cite[Remark 1.4]{BGR2014a} and arguments similar to the justification of \cite[(1.8)]{BGR2014a} we obtain the following scaling properties 
 $V^{-1}$. 
\begin{lemma}
\label{Vscaling}
Let $\psi$ satisfy WLSC($\underline{\alpha},\theta_0,\underline{C}$) and WUSC($\overline{\alpha},\theta_0,\overline{C}$) for some $\underline{\alpha} > 0$, $\overline{\alpha} \in (0,2)$, $\theta_0 \ge 0$, and $\underline{C}, \overline{C}  > 0$. Then there exists $c_1 = c_1(d,\psi)$ such that
$$
V^{-1}(\eta \omega) \ge c_1 \eta^{2/\underline{\alpha}} V^{-1}(\omega), \quad \text{for} \quad \eta \in (0,1], \, \omega \in (0,1],
$$
\end{lemma}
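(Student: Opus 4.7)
The strategy is to push the weak lower scaling of $\psi$ through the comparability $1/V^{2}(r)\approx \psi(1/r)$ from (\ref{Vpsi}) to produce an upper scaling of $V$, and then invert to read off the desired lower scaling of $V^{-1}$. The exponent $2/\underline{\alpha}$ in the conclusion is exactly what the square in (\ref{Vpsi}) predicts: $\underline{\alpha}$ for $\psi$ becomes $\underline{\alpha}/2$ for $V$, hence $2/\underline{\alpha}$ for $V^{-1}$. The WUSC hypothesis is inherited from the paper's running setup but is not actually needed for this one-sided bound.

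The execution goes as follows. Setting $\theta=1/r$ and $\lambda\theta=1/r'$ with $\lambda\ge 1$ (so $r'=r/\lambda\le r$) in the WLSC inequality $\psi(\lambda\theta)\ge \underline{C}\lambda^{\underline{\alpha}}\psi(\theta)$ and using (\ref{Vpsi}) on both sides, I obtain a constant $c_0=c_0(\psi)$ such that
\[
V^{2}(r')\le c_{0}\left(\frac{r'}{r}\right)^{\underline{\alpha}} V^{2}(r),\qquad 0<r'\le r,
\]
valid for all $r\le 1/\theta_0$ when $\theta_0>0$ (globally when $\theta_0=0$). Now I plug in $r=V^{-1}(\omega)$ and $r'=V^{-1}(\eta\omega)$, which satisfy $r'\le r$ because $\eta\in(0,1]$ and $V^{-1}$ is strictly increasing; then $V(r)=\omega$ and $V(r')=\eta\omega$, so the display reduces to $\eta^{2}\le c_{0}\bigl(V^{-1}(\eta\omega)/V^{-1}(\omega)\bigr)^{\underline{\alpha}}$, which rearranges to the claim with $c_{1}=c_{0}^{-1/\underline{\alpha}}$.

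The only real obstacle is verifying that the range $\omega\in(0,1]$ really is covered. If $\theta_0=0$ there is nothing to check. If $\theta_0>0$, the derived inequality for $V$ only applies when $r=V^{-1}(\omega)\le 1/\theta_0$, i.e.\ when $\omega\le \omega_{*}:=V(1/\theta_0)$; if $\omega_{*}\ge 1$ we are again done. Otherwise, for $\omega\in(\omega_{*},1]$ I argue by monotonicity and compactness: $V^{-1}(\omega)\le V^{-1}(1)$ is uniformly bounded above, while $V^{-1}(\eta\omega)\ge V^{-1}(\eta\omega_{*})$ can be controlled by applying the already-proved scaling at the point $\omega_{*}$, namely $V^{-1}(\eta\omega_{*})\ge c_{1}\eta^{2/\underline{\alpha}}V^{-1}(\omega_{*})=c_{1}\eta^{2/\underline{\alpha}}/\theta_{0}$. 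Absorbing the finite constant $\theta_{0}V^{-1}(1)$ into a new $c_{1}=c_{1}(d,\psi)$ yields the uniform bound on all of $\omega\in(0,1]$, exactly as in the argument behind (1.8) of \cite{BGR2014a}.
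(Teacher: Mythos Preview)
Your argument is correct and matches the approach the paper indicates via its citations: transfer the WLSC of $\psi$ to $V$ through (\ref{Vpsi}), invert, and extend to the full range $\omega\in(0,1]$ by the monotonicity/compactness patch you describe, which is precisely the content of \cite[(1.8), (1.9) and Remark~1.4]{BGR2014a}. The paper gives no independent proof beyond those references, so your write-up is effectively the omitted details.
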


Now we introduce the condition (\textbf{H}), the reader is referred to \cite{BGR2014a} for a detailed exposition.
\begin{definition}
\label{conditionH}
We say that condition (\textbf{H}) holds if for every $r > 0$ there is $H_r \ge 1$ such that 
$$
V(z) - V(y) \le H_r V'(x)(z-y) \quad \text{whenever} \quad 0 < x \le y \le z \le 5x \le 5r.
$$
\end{definition}
We may assume that $r \to H_r$ is nondecreasing.
It is known \cite[Section 7.1]{BGR2015} that if $\psi \in \text{WLSC}$ and $\psi \in \text{WUSC}$ then (\textbf{H}) holds.

As usual for any $x \in \R^d$ we denote by $E^x$, $P^x$ the expectation and the probability measure for the process starting from $x$. Let $D \subset \R^d$ be an open, nonempty set. By $\tau_D = \inf\{t \ge 0: \, X_t \notin D\}$ we denote the first exit time of the process $X$ from  $D$. We define a killed process $X_t^D$ by $X_t^D = X_t$ if $t < \tau_D$ and $X_t^D = \partial$ otherwise, where $\partial$ is some point adjoined to $D$. The transition density for $X_t^D$ on $D$ is given by
$$
p_D(t,x,y) = p_t(x - y) - E^x(p_{t - \tau_D}(y - X(\tau_D)), \tau_D < t), \quad t > 0, \, x,y \in D.
$$
$p_D(t,x,y)$ is called the Dirichlet heat kernel for the process $X$ on the set $D$. For any $t > 0$ we put $p_D(t,x,y) = 0$ if $x \notin D$ or $y \notin D$.
It is well known that $p_D(t,x,y) = p_D(t,y,x)$ for any $t > 0$, $x, y \in D$. The Dirichlet semigroup $\{P_t^D\}_{t > 0}$ of $X$ on an open set $D \subset \R^d$ is defined by
\begin{equation}
\label{heatsemigroup}
P_t^D f(x) = E^x(f(X_t), \, \tau_D > t), \quad x \in D,
\end{equation}
for any measurable, bounded function $f: D \to \R$. It is well known that $P_t^D f(x) = \int_D p_D(t,x,y) f(y) \, dy$, $x \in D$, $t > 0$.

The corresponding {\it{Green function}} is defined by
$$
G_D(x,y) = \int_0^{\infty} p_D(t,x,y) \, dt, \quad x,y \in D, \quad x \ne y,
$$
$G_D(x,x) = \infty$, $x \in D$, $G_D(x,y) = 0$ if $x \notin D$ or $y \notin D$.

Let $D \subset \R^d$ be a bounded, open, nonempty set. The distribution $P^x(X(\tau_D) \in \cdot)$ is called the {\it{harmonic measure}} with respect to $X$. The harmonic measure for Borel sets $A \subset (\overline{D})^c$ is given by the Ikeda-Watanabe formula \cite{IW1962}
\begin{equation}
\label{IW}
P^x(X(\tau_D) \in A) = \int_A \int_D G_D(x,y) \nu(y-z) \, dy \, dz, \quad x \in D.
\end{equation} 
When $D \subset \R^d$ is a bounded, open Lipschitz set then we have  \cite{Sztonyk2000}, \cite{Millar1975}
\begin{equation}
\label{Xboundary}
P^x(X(\tau_D) \in \partial D) = 0, \quad x \in D.
\end{equation}
It follows that for such sets $D$ the Ikeda-Watanabe formula (\ref{IW}) holds for any Borel set $A \subset D^c$.
Let $D \subset \R^d$ be a bounded, open, nonempty set. For any $s > 0$, $x \in D$, $z \in (\overline{D})^c$ put 
\begin{equation}
\label{hformula}
h_D(x,s,z) = \int_D p_D(s,x,y) \nu(y-z) \, dy.
\end{equation}
By Ikeda-Watanabe formula \cite{IW1962} and standard arguments (see e.g. \cite[proof of Proposition 2.5]{KS2006}) for any Borel sets $A \subset (0,\infty)$, $B \subset (\overline{D})^c$ we have 
\begin{equation}
\label{IW2}
P^x(\tau_D \in A, X(\tau_D) \in B) = \int_A \int_B h_D(x,s,z) \, dz \, ds, \quad x \in D.
\end{equation}
If (\ref{Xboundary}) holds then we can take $B \subset D^c$ in (\ref{IW2}). 

The main tool used in the proof of Theorem \ref{main1} is the so-called difference process already  constructed  in \cite{KR2016} (cf. also \cite{K2013}). For the Reader convenience we recall its  definition and basic properties  from \cite[Section 4]{KR2016}. We will use the following notation $\hat{x} = (-x_1,x_2,\ldots,x_d)$ for $x = (x_1,x_2,\ldots,x_d)$, $D_+ = \{(x_1,x_2,\ldots,x_d) \in D: \, x_1 > 0\}$, $D_- = \{(x_1,x_2,\ldots,x_d) \in D: \, x_1 < 0\}$ for $D \subset \R^d$. For any $t > 0$, $x,y \in \R^d_+$ put
$$
\tilde{p}_t(x,y) = p_{t}(x - y) - p_t(\hat{x} - y).
$$
Now let us define $\tilde{P}_t(x,A)$, $t \ge 0$, $x \in \R^d_+$, $A \in \calB(\R^d_+)$ by $\tilde{P}_t(x,A) = \int_A \tilde{p}_t(x,y) \, dy$, $t > 0$, and $\tilde{P}_0(x,\cdot) = \delta_x$.
Let us augment $\R^d_+$ by an extra point $\partial$ so that $\R^d_+ \cup \{\partial\}$ is a one-point compactification of $\R^d_+$. We extend $\tilde{P}_t(x,A)$ to a Markov transition function on $\R^d_+ \cup \{\partial\}$ by setting 
\begin{equation}
\label{cemetary}
\tilde{P}_t(x,A) = 
\left\{
\begin{array}{ll}
\displaystyle
\tilde{P}_t(x,A \cap \R^d_+) + 1_A(\partial) (1 - \tilde{P}_t(x,\R^d_+)), & \quad \text{for} \quad x \in \R^d_+, \\
\displaystyle
1_A(\partial), & \quad \text{for} \quad x = \partial,
\end{array}
\right.
\end{equation} 
for any $A \subset \R^d_+ \cup \{\partial\}$ which is in the $\sigma$-algebra in $\R^d_+ \cup \{\partial\}$ generated by $\calB(\R^d_+)$. Then \cite[Section 4]{KR2016} there exists a Hunt process $\tilde{X}=(\tilde{X}_t, t \ge 0)$ with the state space $\R^d_+ \cup \{\partial\}$  and the transition function $\tilde{P}_t(x,A)$. We call it the difference process. We will denote by $\tilde{P}^x$, $\tilde{E}^x$ the probability and the expected value with respect to the process $\tilde{X}$ starting from $x$. 

We say that $D \subset \R^d$ satisfies the {\it{outer cone condition}} if for any $z \in \partial D$ there exist $r > 0$ and a cone $A$ with vertex $z$ such that $A \cap B(z,r) \subset D^c$.

Let $D \subset \R^d_+$ be an open, nonempty set satisfying the outer cone condition. For any $t > 0$, $x,y \in D$ we put 
$$
\tilde{p}_D(t,x,y) = \tilde{p}_t(x,y) - \tilde{E}^x\left(\tilde{p}_{t - \tau_D}(\tilde{X}(\tau_D),y), t > \tau_D\right),
$$
where $\tau_D = \inf\{t \ge 0: \, \tilde{X}_t \notin D\}$.
For any Borel $A \subset D$, $x \in D$ and $t > 0$ we have
\begin{equation*}
\tilde{P}^x(\tilde{X}_t \in A, \tau_D > t) = \int_A \tilde{p}_D(t,x,y) \, dy.
\end{equation*}
We say that a set $D \subset \R^d$ is symmetric if for any $x \in D$ we have $\hat{x} \in D$.

Let $D \subset \R^d$ be an open, nonempty, symmetric set satisfying the outer cone condition. For any $t > 0$, $x,y \in D_+$, we have
$$
\tilde{p}_{D_+}(t,x,y) = p_D(t,x,y) - p_D(t,\hat{x},y).
$$
It follows that
\begin{equation}
\label{key}
0 \le  p_D(t, x, y) - p_D(t,\hat{x}, y)\le p_{t}(x - y) - p_t(\hat{x} - y),
\end{equation}
for any $t > 0$, $x, y \in D_+$.
We define the Green function for $\tilde{X}_t$ and $D_+$ by
$$
\tilde{G}_{D_+}(x,y) = \int_0^{\infty} \tilde{p}_{D_+}(t,x,y) \, dt, \quad x,y \in D_+,
$$
$\tilde{G}_{D_+}(x,x) = \infty$, $x \in D_+$, $\tilde{G}_{D_+}(x,y) = 0$ if $x \notin D_+$ or $y \notin D_+$.
For any $x,y \in D_+$, $x \ne y$ and a Borel set $A \subset \R^d_+$ put
$$
\tilde{\nu}(x,y) = \lim_{t \to 0} \frac{\tilde{p}_t(x,y)}{t} = \nu(x-y) - \nu(\hat{x}-y)
$$
and $\tilde{\nu}(x,A) = \int_A \tilde{\nu}(x,y) \, dy$. We call $\tilde{\nu}(x,A)$ the L{\'e}vy measure for the process $\tilde{X}$.

If $D \subset \R^d$ is a symmetric, open, nonempty, bounded Lipschitz set then for a Borel set $B \subset \R^d_+ \setminus {D}$ and $x \in D_+$ we have 
\begin{equation}
\label{IWtilde}
\tilde{P}^x\left(\tilde{X}(\tau_{D_+}) \in B \right) = 
\int_{D_+} \tilde{G}_{D_+}(x,y) \int_B \tilde{\nu}(y,z) \, dz \, dy.
\end{equation}
For any $s > 0$, $x \in D_+$, $z \in \R^d_+ \setminus \overline{D_+}$ put 
\begin{equation*}
\tilde{h}_D(x,s,z) = \int_{D_+} \tilde{p}_{D_+}(s,x,y) \tilde{\nu}(y,z) \, dy.
\end{equation*}
By (\ref{IWtilde}) and standard arguments (see e.g. \cite[proof of Proposition 2.5]{KS2006}) for any Borel sets $A \subset (0,\infty)$, 
$B \subset \R^d_+ \setminus {D_+}$ we have 
\begin{equation}
\label{IWtilde2}
\tilde{P}^x(\tau_D \in A, \tilde{X}(\tau_D) \in B) = \int_A \int_B \tilde{h}_D(x,s,z) \, dz \, ds, \quad x \in D_+.
\end{equation}

Lemma 5.2 in \cite{KR2016} gives
\begin{lemma}
\label{ABLevyquotient}
Let $X$ be a process which satisfies assumptions of Theorem \ref{main1}. Then for any $v,z \in \R^d_+$ we have
\begin{eqnarray*}
\tilde{\nu}(v,z) &\le& c |z-\hat{z}| \frac{\nu(v-z)}{1\wedge|v-z|} \left(1+\frac{|v-\hat{z}|}{|v-z|}\right),
\end{eqnarray*}
where $c=c(d,\psi)$.
\end{lemma}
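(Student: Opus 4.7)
My plan is to reduce the inequality to a pointwise estimate $-\nu'(r) \le c\,\nu(r)/(1 \wedge r)$, which I would then establish using the monotonicity of $-\nu'(s)/s$ together with the doubling of $\nu$ that follows from the scaling assumptions.

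First, by radiality $|\hat v - z| = |v - \hat z|$, so $\tilde\nu(v,z) = \nu(|v-z|) - \nu(|v-\hat z|)$. Writing $r = |v-z|$ and $R = |v-\hat z|$, a direct coordinate expansion gives $R^2 - r^2 = 4 v_1 z_1 \ge 0$, hence $R \ge r$. Using that $-\nu'(s)/s$ is nonincreasing, I would write
$$\tilde\nu(v,z) = \int_r^R \frac{-\nu'(s)}{s}\, s\, ds \;\le\; \frac{-\nu'(r)}{r}\cdot \frac{R^2 - r^2}{2}.$$
Now $v_1 \le R$ (immediate from $R^2 \ge (v_1+z_1)^2 \ge v_1^2$) and $2z_1 = |z-\hat z|$, so $R^2 - r^2 = 4 v_1 z_1 \le 2R\,|z-\hat z|$, yielding $\tilde\nu(v,z) \le (-\nu'(r)/r)\,R\,|z-\hat z|$. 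Since $R \ge r$, $R/r \le 1 + R/r$, and the target inequality thus reduces to the pointwise bound
$$-\nu'(r) \le \frac{c\,\nu(r)}{1 \wedge r}, \qquad r > 0. \qquad (\ast)$$

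To prove $(\ast)$ I would again exploit monotonicity of $-\nu'(s)/s$: on $[r/2,r]$ it gives $-\nu'(s) \ge s\bigl(-\nu'(r)\bigr)/r$, so integrating yields $-r\nu'(r) \le (8/3)\,\nu(r/2)$. For $r \le 1$ I would conclude via the doubling $\nu(r/2) \le C\nu(r)$, a standard consequence of $\psi \in \text{WLSC}$ combined with the radial monotonicity of $\nu$ (reflecting the usual comparison $\nu(r)\,r^d \approx \psi(1/r)$ available for small $r$). For $r \ge 1$ I would instead integrate over $[r-1,r]$, producing $-\nu'(r) \le 2\nu(r-1)$, and then apply the hypothesis $\nu(r-1) \le a\nu(r)$ (valid once $r-1 \ge 1$) to handle $r \ge 2$; the compact range $r \in [1,2]$ would be absorbed into the constant via the uniform ratio $\nu(1/2)/\nu(2)$. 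The doubling of $\nu$ near the origin is the only nontrivial input and is the main obstacle, though it is a well-established fact for unimodal L\'evy processes in the class under consideration and can be cited from the preliminaries.
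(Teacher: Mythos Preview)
Your argument is correct. The paper does not actually prove this lemma: it is quoted verbatim as Lemma~5.2 of \cite{KR2016}, so there is no in-paper proof to compare against. Your reduction via the monotonicity of $-\nu'(s)/s$ and the identity $R^2-r^2=4v_1z_1$ is clean and leads exactly to the pointwise bound $(\ast)$; the proof of $(\ast)$ you outline is also fine. Two small remarks: the doubling $\nu(r/2)\le C\nu(r)$ for bounded $r$ is recorded in the paper as the Remark following Lemma~\ref{nuestimates}, but it relies on both scaling conditions WLSC and WUSC (through Lemma~\ref{nuestimates}), not on WLSC alone as you suggest---this is harmless since Theorem~\ref{main1} assumes both. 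Also, your treatment of the range $r\in[1,2]$ via the ratio $\nu(1/2)/\nu(2)$ is legitimate because $\nu$ is assumed strictly positive.
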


For any open bounded set $D \subset \R^d$ and any $t > 0$ the operators $P_t^D$ (defined by (\ref{heatsemigroup})) acting on $L^2(D)$ are Hilbert-Schmidt operators. From general theory of semigroups it is well known that there exists a sequence of eigenvalues $0 < \lambda_1^D < \lambda_2^D \le \lambda_3^D \le \ldots$, $\lim_{n \to \infty} \lambda_n^D = \infty$ and an orthonormal basis of eigenfunctions $\{\varphi_n^D\}_{n = 1}^{\infty}$ such that
$$
P_t^D \varphi_n^D(x) = \exp(-\lambda_n^D t) \varphi_n^D(x), \quad t > 0, \, x \in D, \, n \in \N.
$$
The following lemma was proved in \cite{BGR2014a}.
\begin{lemma} \label{eigenApprox}
Let $D$ be an open bounded set containing a ball of radius $R > 0$. Then
 $$\frac 1{8} \left(\frac R  {\rm{diam} D} \right)^{2} \le  \lambda_1^DV^2(R) \le  c \left(\frac {\rm{diam} D} R\right)^{d/2},$$
where  $c=c(d)$.
\end{lemma}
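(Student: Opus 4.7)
The plan is to reduce both bounds to statements about the expected exit time $E^x\tau_D = G_D 1(x)$, exploiting the identity
\begin{equation*}
\int_D \varphi_1^D(x)\, E^x\tau_D \, dx \;=\; \frac{1}{\lambda_1^D} \int_D \varphi_1^D(x)\, dx,
\end{equation*}
obtained by integrating the eigenfunction relation $\varphi_1^D = \lambda_1^D G_D \varphi_1^D$ against the constant function $1$ and using symmetry of $G_D$. Since $\varphi_1^D>0$ on $D$, this immediately yields $\lambda_1^D \ge 1/\sup_{x\in D} E^x\tau_D$; the upper bound will instead come from the Rayleigh quotient $\lambda_1^D \le \calE(u,u)/\|u\|_2^2$ applied to the test function $u = G_D 1$.

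\emph{Lower bound.} For any $x\in D$, containment $D\subset B(x,\mathrm{diam}\,D)$ and translation invariance give $E^x\tau_D \le E^x\tau_{B(x,\mathrm{diam}\,D)} = E^0\tau_{B(0,\mathrm{diam}\,D)}$. Pruitt's maximal estimate, combined with (\ref{psi*}) and (\ref{Vpsi}), yields the explicit bound $E^0\tau_{B(0,r)} \le 2V^2(r)$. Subadditivity of $V$ gives $V(s)\le \lceil s/r\rceil V(r)\le 2(s/r)V(r)$ for $s\ge r>0$, so $V^2(\mathrm{diam}\,D)\le 4(\mathrm{diam}\,D/R)^2 V^2(R)$. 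Chaining the three estimates produces $\lambda_1^D V^2(R)\ge (R/\mathrm{diam}\,D)^2/8$.

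\emph{Upper bound.} With $u = G_D 1 = E^{\cdot}\tau_D$ extended by $0$ on $D^c$, the L\'evy generator satisfies $(-L)u = 1$ on $D$, whence $\calE(u,u) = \int_D u\,dx$ and $\|u\|_2^2 = \int_D u^2\,dx$. Cauchy--Schwarz gives $\int_D u\,dx \le |D|^{1/2}\|u\|_2$, so
\begin{equation*}
\lambda_1^D \;\le\; \frac{\calE(u,u)}{\|u\|_2^2} \;\le\; \frac{|D|^{1/2}}{\|u\|_2}.
\end{equation*}
To bound $\|u\|_2$ from below, restrict the integral to the smaller ball $B(x_0,R/2)\subset D$: for $x$ there, $u(x) \ge E^x\tau_{B(x,R/2)}\ge c\,V^2(R/2) \ge (c/4)V^2(R)$ by the matching Pruitt-type lower bound together with the same subadditivity of $V$. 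Hence $\|u\|_2^2 \ge c'\, V^4(R) R^d$; combining with $|D|\le c(d)(\mathrm{diam}\,D)^d$ gives $\lambda_1^D V^2(R) \le c(\mathrm{diam}\,D/R)^{d/2}$.

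The key technical input is the sharp Pruitt-type estimate $E^0\tau_{B(0,r)}\le 2V^2(r)$, needed to secure the explicit constant $1/8$ on the left; the matching lower bound on $E^x\tau_{B(x,r)}$ is required only qualitatively for the upper inequality. Both are standard for isotropic unimodal L\'evy processes under the paper's hypotheses and follow from Pruitt's classical maximal-type inequality together with (\ref{psi*})--(\ref{Vpsi}).
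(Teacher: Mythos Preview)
The paper does not prove this lemma; immediately before the statement it says ``The following lemma was proved in \cite{BGR2014a}'' and gives no argument of its own. So there is no in-paper proof to compare against, only the outline in the cited reference.

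Your argument is correct in structure and is essentially the one used in \cite{BGR2014a}. For the lower bound, the reduction $\lambda_1^D \ge 1/\sup_x E^x\tau_D$ together with $E^x\tau_D \le E^0\tau_{B(0,\mathrm{diam}\,D)}$ and the subadditivity estimate $V(\mathrm{diam}\,D)\le 2(\mathrm{diam}\,D/R)V(R)$ is exactly the intended route; the only point that deserves a precise citation is the inequality $E^0\tau_{B(0,r)} \le 2V^2(r)$, which is what pins down the constant $1/8$. This does hold for isotropic unimodal L\'evy processes---it comes from comparing $\tau_{B(0,r)}$ with the exit time of the first coordinate from $(-r,r)$ and the one-dimensional fluctuation identity relating that exit time to the ladder-height renewal function $V$---but you should point to the exact statement (e.g.\ in \cite{BGR2015} or the proof in \cite{BGR2014a}) rather than call it ``standard''.

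For the upper bound your Rayleigh-quotient argument with $u=G_D 1$ is fine and gives the stated inequality. It is worth noting that the same computation applied to the ball $B(x_0,R)$ itself, followed by domain monotonicity $\lambda_1^D\le \lambda_1^{B(x_0,R)}$, actually yields the stronger conclusion $\lambda_1^D V^2(R)\le c(d)$ with no factor $(\mathrm{diam}\,D/R)^{d/2}$; the lemma as stated (and as proved in \cite{BGR2014a}) records only the weaker bound, so your derivation matches.
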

For $R > 0$, by Lemma \ref{eigenApprox}, we get
\begin{equation}
\label{eigenvalue}
\lambda_1^{B(0,R)} \approx \frac{1}{V^2(R)},
\end{equation}
where the comparability constant depends only on $d$.

The following result is the partial extension of Theorem 4.5 in \cite{BGR2014a}. Recall that for any set $D\subset \R^d$ and  $x \in \R^d$,  $\delta_{D}(x)= \dist (x, D^c)$.
\begin{theorem}\label{hk_kula2}Let $R > 0$ and put $\lambda_1(R)=\lambda_1^{B(0,R)}$. If $\psi\in\WLSC{\la}{ \theta_0}{\lC}\cap\WUSC{\ua}{ \theta_0}{\uC}$, for some $\underline{\alpha} > 0$, $\overline{\alpha} \in (0,2)$, $\theta_0 \ge 0$, $\underline{C}, \overline{C}  > 0$
 and the L{\'e}vy measure has strictly positive density then we have 
 \begin{eqnarray*} 
&& c_1^{-1}
      \p^x\left(\tau_{B(0,R)}>\frac{t}{2}\right) \p^y\left(\tau_{B(0,R)}>\frac{t}{2}\right) p_{t\wedge V^2(R)}(x-y)\\
&\le& p_{B(0,R)}(t, x, y) \le c_1
      \p^x\left(\tau_{B(0,R)}>\frac{t}{2}\right) \p^y\left(\tau_{B(0,R)}>\frac{t}{2}\right) p_{t\wedge V^2(R)}(x-y)
 \end{eqnarray*}
and
$$ c_2^{-1}  e^{-\lambda_1(R) t}\left(\frac{V(\delta_{B(0,R)}(x))}{\sqrt{t}\wedge V(R)}\wedge 1\right) \le 
\p^x\left(\tau_{B(0,R)}>t\right) \le c_2  e^{-\lambda_1(R) t}\left(\frac{V(\delta_{B(0,R)}(x))}{\sqrt{t}\wedge V(R)}\wedge 1\right)
$$
for all $x,y\in B(0,R)$ and $t>0$. The constants $c_1$, $c_2$ depend on  $R$,  $d$ and $\psi$. They are nondecreasing with respect to $R$.
\end{theorem}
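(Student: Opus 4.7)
The plan is to split into two regimes: the short-time window $t\le V^2(R)$, where everything reduces to \cite[Theorem~4.5]{BGR2014a}, and the long-time window $t\ge V^2(R)$, where intrinsic ultracontractivity (IU) and the eigenfunction expansion take over. In the short-time range we have $t\wedge V^2(R)=t$ and, by \eqref{eigenvalue}, $\lambda_1(R) t\le\lambda_1(R) V^2(R)\approx 1$, so $e^{-\lambda_1(R)t}$ lies between two positive constants depending only on $d$. Consequently both stated inequalities follow directly from \cite[Theorem~4.5]{BGR2014a} after absorbing bounded factors into $c_1,c_2$.

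For the regime $t\ge T_0:=V^2(R)$ I would first use the short-time factorization
$$p_{B(0,R)}(T_0,x,y)\approx R^{-d}\left(\tfrac{V(\delta_{B(0,R)}(x))}{V(R)}\wedge 1\right)\left(\tfrac{V(\delta_{B(0,R)}(y))}{V(R)}\wedge 1\right),$$
derived from the short-time case together with the free heat-kernel estimate $p_{V^2(R)}(z)\approx R^{-d}$ for $|z|\le R$ (which in turn follows from \eqref{Vpsi-1} and standard bounds), to establish IU of the Dirichlet semigroup on $B(0,R)$. Standard arguments then give, for $t\ge 2T_0$,
$$p_{B(0,R)}(t,x,y)\approx e^{-\lambda_1(R)t}\varphi_1^R(x)\varphi_1^R(y),$$
with $\varphi_1^R$ the $L^2$-normalized first eigenfunction, and an analogous factorization for $P^x(\tau_{B(0,R)}>t)$. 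Applying the identity $\varphi_1^R(x)=e^{\lambda_1(R)T_0}P_{T_0}^{B(0,R)}\varphi_1^R(x)$ together with the short-time pointwise bound yields the profile $\varphi_1^R(x)\approx C(R)\bigl(V(\delta_{B(0,R)}(x))/V(R)\wedge 1\bigr)$ for some constant $C(R)$ depending only on $R,d,\psi$.

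Inserting these two inputs into the long-time formulas, and observing that for $t\ge V^2(R)$ we have $\sqrt{t}\wedge V(R)=V(R)$, produces both claims for $t\ge 2T_0$; the intermediate band $V^2(R)\le t\le 2V^2(R)$ is then handled by a direct one-step semigroup estimate. The constant $C(R)$ is pinned down by matching the small-time and large-time expressions at the interface $t\approx V^2(R)$, which guarantees that the two regimes join consistently.

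The main obstacle will be establishing IU and the eigenfunction profile with quantitative constants depending only on $R,d,\psi$ and in a form non-decreasing in $R$. The monotonicity in $R$ can be enforced a posteriori by replacing $c_i(R)$ with $\sup_{R'\le R}c_i(R')$, which remains finite since we never claim uniformity as $R\to\infty$; the real work is extracting the correct $R$-dependence of $C(R)$ from the $L^2$-normalization of $\varphi_1^R$ through the IU step, so that the two regimes agree at $t\approx V^2(R)$.
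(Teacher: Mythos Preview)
Your overall architecture---split at $t=V^2(R)$, use known factorized estimates for short time, and pass to large time via spectral/IU-type arguments---is exactly the paper's strategy. The paper's Lemmas~\ref{UB10} and~\ref{LB10} are precisely the IU-style inputs you describe (phrased with survival probabilities rather than $\varphi_1^R$, but equivalent), and the matching at $t\approx V^2(R)$ is handled the same way.

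The gap is in the short-time step. You invoke \cite[Theorem~4.5]{BGR2014a} as a black box, but the paper's Appendix opens by saying that the constants there are not tracked in a way that yields the claimed monotonicity in $R$; redoing that estimate with explicit $R$-dependence is the whole point of the Appendix. Concretely, the paper goes back to the ingredients of \cite[Theorem~4.5]{BGR2014a}---the upper bound of Lemma~\ref{hk_kula1}, the lower bound of Lemma~\ref{lower_hk_estimate1}, and the survival-probability bound of Lemma~\ref{L5a1}---and expresses every comparability constant through the explicit quantities $\underline{C}_R$, $C_R$, $\tilde{C}_R$, $C^*_R$, $H_R$, $\mathcal{I}_R$, each of which is manifestly monotone in $R$ by construction. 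Your proposed fix of replacing $c_i(R)$ by $\sup_{R'\le R}c_i(R')$ is not free: you must show this supremum is finite, i.e.\ that the constants from \cite[Theorem~4.5]{BGR2014a} applied to $B(0,R')$ do not blow up as $R'\to 0^+$. That is plausible (small balls sit in the scaling regime), but verifying it amounts to reopening the proof of \cite[Theorem~4.5]{BGR2014a} and tracking constants---which is exactly what the paper does. The same caveat applies to your claim $p_{V^2(R)}(z)\approx R^{-d}$ for $|z|\le 2R$: the comparability constants in Lemma~\ref{ptestimates} depend on the range, so you again need the explicit $C_R$-type bookkeeping the paper carries out.

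In short: right skeleton, but the monotonicity-in-$R$ assertion is the substance of the theorem, and it cannot be obtained by citing \cite[Theorem~4.5]{BGR2014a} plus a sup trick without essentially reproducing the paper's constant-by-constant analysis.
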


Let $R \in (0,1]$. An immediate corollary of Theorem \ref{hk_kula2}, (\ref{eigenvalue}) and subadditivity of $V$ is the following comparability 
\begin{equation}
\label{comp}
p_{B(0,R)}(t, x, y)\approx p_t(x-y), \quad t\le V^2(R), \, x,y \in B(0,3R/4).
\end{equation}
The comparability constant depends only on $d$ and $\psi$.

\section{Auxiliary estimates of the heat kernel and the L{\'e}vy measure}

In this section we present some estimates of $p_t(r)$ and $\nu(r)$ which will be needed in the sequel. 

The following estimate follows from \cite[Corollary 2.12]{GRT2016}.
\begin{lemma} 
\label{GRTestimates}
For any $r, t > 0$ we have
$$
p_t(r) \ge c t \nu(r) \exp\left(\frac{-c_1 t}{V^2(r)}\right),
$$
where $c = c(d)$, $c_1 = c_1(d)$.
\end{lemma}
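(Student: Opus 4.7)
The strategy is to derive this estimate as a direct consequence of \cite[Corollary 2.12]{GRT2016}, combined with the comparisons between $\psi$, $\psi^{*}$ and $V^{-2}$ already assembled in Section~2. The cited corollary, for an isotropic unimodal pure-jump L\'evy process with infinite L\'evy measure, provides a universal lower bound of the form
\[
p_t(r) \ge c\, t\, \nu(r)\, \exp\!\bigl(-c_1\, t\, \psi^{*}(1/r)\bigr), \qquad r,t>0,
\]
with $c=c(d)$ and $c_1=c_1(d)$. Once this is in hand, the only remaining task is a translation of the exponential factor from $\psi^{*}(1/r)$ to $1/V^{2}(r)$.

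For the translation I would chain the two comparabilities that are stated in Section~2 without any scaling hypothesis. By (\ref{psi*}), $\psi(1/r) \le \psi^{*}(1/r) \le \pi^{2}\,\psi(1/r)$ with absolute constants, while (\ref{Vpsi}) yields $\psi(1/r) \approx V^{-2}(r)$, again with absolute constants. Composing, $\psi^{*}(1/r) \le c_{2}/V^{2}(r)$ for some absolute $c_2$, so
\[
\exp\!\bigl(-c_1\, t\, \psi^{*}(1/r)\bigr) \ge \exp\!\bigl(-c_1 c_2\, t/V^{2}(r)\bigr),
\]
and the claim follows with the new dimensional constant $c_1c_2$ in place of $c_1$. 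Nothing in this step is sensitive to the particular behaviour of $\psi$ at $0$ or $\infty$, which is why the conclusion holds for all $r,t>0$ with no scaling assumption.

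The main (and in fact only) potential obstacle I anticipate is verifying that \cite[Corollary 2.12]{GRT2016} is really stated in the universal form written above, rather than only under additional scaling hypotheses. If it is, there is nothing more to do. If instead that result is formulated in terms of $\psi(1/r)$ or under a restriction such as $t \le T(r)$, one would either invoke the comparison (\ref{psi*})/(\ref{Vpsi}) directly on the $\psi$-form, or reprove the universal bound from first principles by the standard Lévy--It\^o decomposition: split $X$ into small jumps (support $\subset B(0,r/2)$) and large jumps (a compound Poisson of intensity $\lambda\sim \int_{|y|>r/2}\nu(dy)$), estimate the event ``exactly one large jump in $(0,t]$, landing near $x$'' by $c\,t\,\nu(r)$, and use $P(|Y_t|<r/2)\gtrsim e^{-c't/V^{2}(r)}$ (a Chebyshev-type bound for the small-jump part, whose characteristic exponent is comparable to $\psi(\xi)$ for $|\xi|\lesssim 1/r$) to produce the exponential factor.
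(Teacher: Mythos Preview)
Your proposal is correct and matches the paper's approach: the paper simply records that the lemma follows from \cite[Corollary 2.12]{GRT2016}, and your additional step of converting the exponential factor from $\psi^{*}(1/r)$ to $1/V^{2}(r)$ via (\ref{psi*}) and (\ref{Vpsi}) is exactly the (implicit) translation needed. No further argument is required.
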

The next lemma follows from \cite[Corollary 7]{BGR2014}.
\begin{lemma}
\label{upperptr}
For any $r, t > 0$ we have
$$
p_t(r) \le \left(p_t(0) \wedge \frac{c t}{V^2(r) r^d}\right),
$$
where $c = c(d)$.
\end{lemma}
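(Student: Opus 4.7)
The first bound $p_t(r) \le p_t(0)$ is immediate: by the standing assumption $X$ is isotropic unimodal, so $p_t$ is radial and radially nonincreasing, hence attains its maximum at the origin.

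For the nontrivial bound $p_t(r) \le c t / (V^2(r) r^d)$ my plan is to combine radial monotonicity with a tail estimate for $|X_t|$. Since $p_t$ is radially nonincreasing, for every $y \in B(0,r)$ we have $p_t(y) \ge p_t(r)$, so
\begin{equation*}
p_t(r) \cdot \bigl|B(0,r) \setminus B(0,r/2)\bigr| \;\le\; \int_{B(0,r)\setminus B(0,r/2)} p_t(y)\,dy \;\le\; \Pp\bigl(|X_t| \ge r/2\bigr).
\end{equation*}
The left-hand side equals $c_d r^d p_t(r)$ for a dimensional constant, so it remains to control the tail by $c t / V^2(r)$.

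The tail estimate is the classical Pruitt-type bound for isotropic Lévy processes: there exists $c=c(d)$ such that $\Pp(|X_t|\ge \rho) \le c t \psi^*(1/\rho)$ for all $t,\rho>0$. This is precisely the content that underlies \cite[Corollary 7]{BGR2014}. Applying it with $\rho = r/2$, using monotonicity of $\psi^*$, the bound $\psi^*(2/r) \le c' \psi^*(1/r)$ (which holds because $\psi^* \le \pi^2 \psi$ is subadditive-type and also by (\ref{psi*}) together with the doubling-type relation inherent in unimodality), and finally invoking (\ref{Vpsi}) in the form $\psi^*(1/r) \approx \psi(1/r) \approx 1/V^2(r)$, one obtains
\begin{equation*}
\Pp\bigl(|X_t| \ge r/2\bigr) \;\le\; \frac{c\,t}{V^2(r)}.
\end{equation*}
Dividing by $r^d$ yields the desired bound.

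The only mildly delicate point is the constant-tracking in the Pruitt-type inequality and the passage from $\psi^*(2/r)$ to $1/V^2(r)$; both are standard and are subsumed by \cite[Corollary 7]{BGR2014}, from which the lemma follows directly. Since the comparability $\psi(1/r)\approx 1/V^2(r)$ is absolute (see (\ref{Vpsi})) and the pointwise inequality $\psi \le \psi^* \le \pi^2 \psi$ is also absolute, no scaling hypotheses on $\psi$ are needed here, which matches the statement of the lemma.
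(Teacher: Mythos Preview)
Your argument is correct and is precisely the standard route to this bound: radial monotonicity plus the Pruitt-type tail estimate $\Pp(|X_t|\ge \rho)\le c\,t\,\psi^*(1/\rho)$, followed by the absolute comparability $\psi^*(1/r)\approx 1/V^2(r)$. The paper does not give its own proof but simply cites \cite[Corollary~7]{BGR2014}, whose content is exactly this; your only loose phrase is attributing the doubling $\psi^*(2/r)\le 4\psi^*(1/r)$ to ``unimodality'' --- in fact it holds for any real L{\'e}vy exponent via $1-\cos 2\theta\le 4(1-\cos\theta)$ --- but this does not affect the validity of the argument.
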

The next two lemmas are easy consequences of the results from \cite{BGR2014} and \cite{BGR2014a}. 
\begin{lemma}
\label{p0estimates}
Let $\psi$ satisfy WLSC($\underline{\alpha},\theta_0,\underline{C}$) for some $\underline{\alpha} > 0$, $\theta_0 \ge 0$, and $\underline{C}  > 0$. Then for any $T > 0$ there exists $c = c(d,\psi,T)$ such that for any $t \in (0,T]$ we have
$$
p_t(0) \le c [V^{-1}(\sqrt{t})]^{-d}.
$$
\end{lemma}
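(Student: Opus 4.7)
The plan is to combine a direct Fourier-analytic bound on the on-diagonal density with the equivalence (\ref{Vpsi-1}) between $V^{-1}(\sqrt{t})$ and $1/\psi^-(1/t)$. I first establish the scale-free estimate $p_t(0) \le c(d,\psi)\,[\psi^-(1/t)]^d$ in the small-$t$ regime where WLSC is directly applicable, and then extend the bound to all of $(0,T]$ by monotonicity of $t\mapsto p_t(0)$.

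For the main step I would write $p_t(0) = (2\pi)^{-d}\int_{\R^d} e^{-t\psi(\xi)}\,d\xi$, use (\ref{psi*}) to bound the integrand by $e^{-t\psi^*(|\xi|)/\pi^2}$, and pass to polar coordinates to obtain
\[
p_t(0) \le c_d \int_0^\infty r^{d-1}\, e^{-t\psi^*(r)/\pi^2}\,dr.
\]
Set $r_0 := \psi^-(1/t)$; continuity and monotonicity of $\psi^*$ yield $\psi^*(r) \ge 1/t$ for $r \ge r_0$. The lower scaling of $\psi$ transfers (through (\ref{psi*})) to $\psi^*$ with the same parameters $\underline{\alpha},\theta_0$ and a rescaled constant, so for $r \ge r_0 \ge \theta_0$ one has $\psi^*(r) \ge c(r/r_0)^{\underline{\alpha}}/t$. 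Splitting the radial integral at $r_0$, the inner piece contributes at most $r_0^d/d$, and the tail, after the substitution $u=r/r_0$, becomes $r_0^d \int_1^\infty u^{d-1} e^{-c u^{\underline{\alpha}}/\pi^2}\,du$, a finite constant depending only on $d,\psi$. Altogether, $p_t(0) \le c(d,\psi)\,[\psi^-(1/t)]^d$ for all $t$ small enough that $\psi^-(1/t) \ge \theta_0$, i.e.\ for $t \le t^\ast := 1/\psi^*(\theta_0)$ (with $t^\ast = \infty$ when $\theta_0 = 0$).

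Applying (\ref{Vpsi-1}) gives the desired bound $p_t(0) \le c(d,\psi)\,[V^{-1}(\sqrt{t})]^{-d}$ on $(0, t^\ast \wedge 1]$. On the remaining interval $[t^\ast \wedge 1, T]$ (empty unless $T$ exceeds this threshold) I would observe that $t \mapsto p_t(0) = (2\pi)^{-d}\int e^{-t\psi(\xi)}\,d\xi$ is nonincreasing and finite at $t^\ast \wedge 1$ by (\ref{logpsi}); meanwhile $V^{-1}$ is nondecreasing, so $[V^{-1}(\sqrt{t})]^{-d} \ge [V^{-1}(\sqrt{T})]^{-d}>0$. The inequality then extends to $[t^\ast \wedge 1, T]$ with constant $p_{t^\ast \wedge 1}(0)\,[V^{-1}(\sqrt{T})]^d$, which depends only on $d,\psi,T$.

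The main obstacle is the possibly strict positivity of $\theta_0$ in WLSC: low-frequency scaling is unavailable, so the clean Fourier argument only covers small $t$. The bounded leftover interval $[t^\ast \wedge 1, T]$ is absorbed into the final constant by exploiting the trivial monotonicity of the on-diagonal kernel; everything else is a routine split-and-substitute computation.
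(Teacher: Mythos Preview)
Your argument is correct in substance and ends up taking the same two-step structure as the paper's proof (sharp bound for small $t$, then monotonicity of $t\mapsto p_t(0)$ on the bounded remaining interval), but the first step is handled quite differently. The paper does not redo any Fourier computation: it simply invokes \cite[Lemma~1.6]{BGR2014a} (recorded here as Lemma~\ref{sup_p_t}), which already gives $p_t(0)\le c\,[V^{-1}(\sqrt{t})]^{-d}$ directly, for $t$ up to some $T_0=T_0(d,\psi)$. Your approach instead reproves that estimate from scratch via the split $\int_0^\infty r^{d-1}e^{-t\psi^*(r)/\pi^2}\,dr$, which is a legitimate and self-contained route; the trade-off is a longer argument in place of one citation.

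There is one genuine gap in your write-up. You pass from $p_t(0)\le c\,[\psi^-(1/t)]^d$ to the target bound by invoking (\ref{Vpsi-1}), but in the paper that comparability is stated under the joint hypothesis $\psi\in\mathrm{WLSC}\cap\mathrm{WUSC}$, whereas the present lemma assumes only WLSC. You therefore cannot cite (\ref{Vpsi-1}) as written. The fix is easy: the one-sided inequality you actually need, $\psi^-(1/t)\le c/V^{-1}(\sqrt t)$, follows from (\ref{Vpsi}), (\ref{psi*}) and WLSC alone. Indeed, setting $r=V^{-1}(\sqrt t)$, (\ref{Vpsi}) gives $\psi^*(1/r)\ge \psi(1/r)\ge c/t$, and then WLSC of $\psi^*$ lets you amplify this to $\psi^*(\lambda/r)\ge 1/t$ for a fixed $\lambda=\lambda(d,\psi)\ge 1$, whence $\psi^-(1/t)\le \lambda/r$. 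Even cleaner: skip $\psi^-$ entirely and take $r_0:=1/V^{-1}(\sqrt t)$ as your splitting point from the outset; then $\psi^*(r_0)\ge c/t$ by (\ref{Vpsi})--(\ref{psi*}), and the same split-and-substitute argument yields $p_t(0)\le c\,[V^{-1}(\sqrt t)]^{-d}$ directly, with no conversion step needed.
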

\begin{proof}
By \cite[Lemma 1.6]{BGR2014a} there exists $T_0 = T_0(d,\psi)$ and $c_1 = c_1(d,\psi)$ such that for any $t \in (0,T_0]$ we have 
$$
p_t(0) \le c_1 [V^{-1}(\sqrt{t})]^{-d}.
$$
On the other hand for any $t \in (T_0,T]$ we have
$$
p_t(0) \le p_{T_0}(0) \le p_{T_0}(0) \frac{[V^{-1}(\sqrt{T})]^{d}}{[V^{-1}(\sqrt{t})]^{d}}.
$$
\end{proof}

\begin{lemma}
\label{nuestimates}
Let $\psi$ satisfy WLSC($\underline{\alpha},\theta_0,\underline{C}$) and WUSC($\overline{\alpha},\theta_0,\overline{C}$) for some $\underline{\alpha} > 0$, $\overline{\alpha} \in (0,2)$, $\theta_0 \ge 0$, and $\underline{C}, \overline{C}  > 0$. 
We assume also that the L{\'e}vy measure has strictly positive density.
Then for any $R_0 > 0$ there exist $c_1 = c_1(d,\psi,R_0)$, $c_2 = c_2(d)$ such that for any $r \in (0,R_0]$ we have
$$
\frac{c_1}{V^2(r) r^d}\le \nu(r) \le \frac{c_2}{V^2(r) r^d}.
$$
\end{lemma}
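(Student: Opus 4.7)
The plan is to use (\ref{Vpsi}), whose constants are absolute, to reduce the claim to the comparability $\nu(r) r^d \approx \psi(1/r)$ on $(0, R_0]$ with the correct dependencies, and then handle the two directions separately.

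For the upper bound I would exploit the radial monotonicity of $\nu$ to write
\[ \nu(r)\, \omega_d\, (1 - 2^{-d})\, r^d \;\le\; \nu\bigl(B(0,r) \setminus B(0,r/2)\bigr) \;\le\; \nu\bigl(B(0,r/2)^c\bigr), \]
and invoke the classical Pruitt-type tail estimate (see \cite{BGR2014}) giving $\nu(B(0,r/2)^c) \le c_d\, \psi^*(2/r)$. A doubling property for $\psi$ follows from the elementary inequality $1 - \cos 2\theta \le 4(1-\cos\theta)$ applied under the integral defining $\psi$; this yields $\psi(2\xi) \le 4\psi(\xi)$ for all $\xi \ge 0$, and hence $\psi^*(2\xi) \le 4 \psi^*(\xi)$. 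Combining these bounds with (\ref{psi*}) and (\ref{Vpsi}) gives
\[ \nu(r) \;\le\; c_d\, \frac{\psi^*(2/r)}{r^d} \;\le\; 4 c_d\, \frac{\psi^*(1/r)}{r^d} \;\le\; 4\pi^2 c_d\, \frac{\psi(1/r)}{r^d} \;\le\; \frac{c_2}{V^2(r) r^d}, \]
with $c_2$ depending only on $d$.

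For the lower bound I would invoke the implication, established in \cite{BGR2014}, that under WLSC$(\underline{\alpha}, \theta_0, \underline{C})$ one has $\nu(r) \ge c\, \psi(1/r)/r^d$ for $r \in (0, r_0]$, where $r_0 = r_0(\psi) > 0$ (one may take $r_0 = 1/\theta_0$ when $\theta_0 > 0$, or any fixed positive value when $\theta_0 = 0$) and $c = c(d,\psi) > 0$. Via (\ref{Vpsi}) this produces the required lower bound on $(0, r_0]$. If $R_0 > r_0$, I would handle the remaining interval $[r_0, R_0]$ by monotonicity: $\nu$ is nonincreasing and strictly positive, so $\nu(r) \ge \nu(R_0) > 0$, while $V$ is nondecreasing, so $V^2(r) r^d \ge V^2(r_0) r_0^d$; multiplying these yields
\[ \nu(r) V^2(r) r^d \;\ge\; \nu(R_0) V^2(r_0) r_0^d \;=:\; c_1 \;>\; 0, \]
which gives the lower bound on $[r_0, R_0]$ with $c_1 = c_1(d,\psi,R_0)$.

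The hardest step will be locating in \cite{BGR2014} the quantitative lower bound on $\nu$ from WLSC together with a precise description of the range of its validity in terms of $\theta_0$; every other ingredient, namely the doubling of $\psi^*$ via the trigonometric inequality, the Pruitt-type tail bound, and the elementary compactness argument on $[r_0, R_0]$, is routine.
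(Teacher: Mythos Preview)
Your argument is correct and, for the lower bound, matches the paper exactly: the paper also invokes \cite[Corollary 22]{BGR2014} together with (\ref{Vpsi}) to obtain $\nu(r)\ge c_1/(V^2(r)r^d)$ on $(0,r_0]$, and then uses the same monotonicity trick $\nu(r)\ge \nu(R_0)\ge \nu(R_0)\,V^2(r_0)r_0^d/(V^2(r)r^d)$ on $(r_0,R_0]$. One small remark: the cited lower bound in \cite{BGR2014} uses both scaling hypotheses, not WLSC alone, so your parenthetical ``under WLSC'' is slightly imprecise---but both are assumed here, so nothing is lost.

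For the upper bound the routes diverge slightly. The paper simply appeals to Lemma~\ref{upperptr}, i.e.\ the heat-kernel bound $p_t(r)\le c(d)\,t/(V^2(r)r^d)$ from \cite[Corollary 7]{BGR2014}, and passes to the limit $\nu(r)=\lim_{t\downarrow 0}p_t(r)/t$. You instead bound $\nu(r)r^d$ by the tail $\nu(B(0,r/2)^c)$ via radial monotonicity, apply the Pruitt-type estimate, and then use the elementary doubling $\psi(2\xi)\le 4\psi(\xi)$ together with (\ref{psi*}) and (\ref{Vpsi}). Both paths are short, both land on a constant depending only on $d$, and both ultimately rest on the same Pruitt-type inequalities in \cite{BGR2014}; your version is arguably more self-contained since it avoids the detour through $p_t$.
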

\begin{proof}
The upper bound follows from Lemma \ref{upperptr}.

By \cite[Corollary 22]{BGR2014} and (\ref{Vpsi}) there exists $r_0 = r_0(d,\psi)$ and $c_1 = c_1(d,\psi)$ such that for any $r \in (0,r_0]$ we have
$$
\nu(r) \ge \frac{c_1}{V^2(r) r^d}.
$$
Hence, for any $r \in (r_0,R_0]$ we obtain
$$
\nu(r) \ge \nu(R_0) \ge \nu(R_0) \frac{V^2(r_0) r_0^d}{V^2(r) r^d}.
$$
\end{proof}

\begin{remark}
Note that under the assumptions of Lemma \ref{nuestimates}, for any $R>0$,  there exists $c = c(d,\psi, R)$ such that the 
L{\'e}vy measure satisfies $\nu(r) \le c \nu(2r)$  for any $0 < r \le R$.
\end{remark}

\begin{lemma}
\label{ptestimates}
Let $\psi$ satisfy WLSC($\underline{\alpha},\theta_0,\underline{C}$) and WUSC($\overline{\alpha},\theta_0,\overline{C}$) for some $\underline{\alpha} > 0$, $\overline{\alpha} \in (0,2)$, $\theta_0 \ge 0$, and $\underline{C}, \overline{C}  > 0$. We assume also that the L{\'e}vy measure has strictly positive density. 
Fix $R > 0$. 
If $t \in (0,1\vee V^2(R)]$, $r > 0$ and $r < V^{-1}(\sqrt{t})$ then 
\begin{equation}
\label{ptestimates2}
p_t(r) \approx [V^{-1}(\sqrt{t})]^{-d},
\end{equation}
if $t > 0$, $r \in [0,R]$ and $r \ge V^{-1}(\sqrt{t})$ then 
\begin{equation}
\label{ptestimates3}
p_t(r) \approx \frac{t}{V^2(r) r^d}.
\end{equation}
For any $t \in (0,1\vee V^2(R)]$, $r \in [0,R]$ we have
\begin{equation}
\label{ptestimates1}
p_t(r) \approx \min\left\{[V^{-1}(\sqrt{t})]^{-d}, \frac{t}{V^2(r) r^d}\right\}. 
\end{equation}
The comparability constants depend only on $d$, $\psi$ and $R$.
\end{lemma}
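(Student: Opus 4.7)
The estimate is a straightforward assembly from the three preceding lemmas in the section, so my plan is to split it into upper and lower bounds, and then observe that part (\ref{ptestimates1}) follows trivially from (\ref{ptestimates2}) and (\ref{ptestimates3}) since $r < V^{-1}(\sqrt t)$ iff $[V^{-1}(\sqrt t)]^{-d} < t/(V^2(r)r^d)$ by (\ref{rVd}).

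For the \emph{upper} bounds my plan is the following. For (\ref{ptestimates3}) I would just invoke Lemma \ref{upperptr} directly, which gives $p_t(r) \le ct/(V^2(r)r^d)$ with no restriction on $t$ or $r$. For (\ref{ptestimates2}), since $p_t$ is radially nonincreasing I have $p_t(r) \le p_t(0)$; then I apply Lemma \ref{p0estimates} with $T = 1 \vee V^2(R)$ to conclude $p_t(0) \le c[V^{-1}(\sqrt t)]^{-d}$, where the constant is allowed to depend on $d,\psi,R$.

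For the \emph{lower} bounds the main tool is Lemma \ref{GRTestimates}: $p_t(r) \ge c\, t\, \nu(r) \exp(-c_1 t/V^2(r))$. For (\ref{ptestimates3}), the hypothesis $r \ge V^{-1}(\sqrt t)$ is equivalent (via (\ref{Vr})) to $t/V^2(r) \le 1$, so the exponential contributes a universal constant; then Lemma \ref{nuestimates} yields $t\nu(r) \approx t/(V^2(r)r^d)$, provided $r$ stays in its domain of applicability (see below). For (\ref{ptestimates2}), I use radial monotonicity in the opposite direction: if $r < V^{-1}(\sqrt t)$ then $p_t(r) \ge p_t(V^{-1}(\sqrt t))$, and at this "diagonal" radius $V^2(V^{-1}(\sqrt t)) = t$, so the exponential in Lemma \ref{GRTestimates} is exactly $e^{-c_1}$, and Lemma \ref{nuestimates} gives
\[
p_t(V^{-1}(\sqrt t)) \;\ge\; c\, t \, \nu(V^{-1}(\sqrt t)) \;\approx\; \frac{c\, t}{V^2(V^{-1}(\sqrt t))\, [V^{-1}(\sqrt t)]^d} \;=\; \frac{c}{[V^{-1}(\sqrt t)]^d}.
\]

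The only mildly annoying point — and where care is needed — is verifying that the radii to which Lemma \ref{nuestimates} is applied lie in an interval $(0,R_0]$ with $R_0 = R_0(d,\psi,R)$, since that lemma is only stated locally. For (\ref{ptestimates3}) this is given: $r \in [0,R]$ so take $R_0 = R$. For the lower bound in (\ref{ptestimates2}) I apply Lemma \ref{nuestimates} at radius $V^{-1}(\sqrt t)$, and from $t \le 1 \vee V^2(R)$ together with monotonicity of $V^{-1}$ I obtain $V^{-1}(\sqrt t) \le V^{-1}(1) \vee R =: R_0(\psi,R)$, which is an admissible cap. All comparability constants then depend only on $d$, $\psi$, and $R$, as claimed.
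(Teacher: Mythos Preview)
Your proof is correct and essentially identical to the paper's own argument: the paper also reduces (\ref{ptestimates1}) to (\ref{ptestimates2}) and (\ref{ptestimates3}) via (\ref{rVd}), handles the upper bounds by Lemma \ref{upperptr} and Lemma \ref{p0estimates} (with $T = 1 \vee V^2(R)$), and the lower bounds by combining Lemma \ref{GRTestimates} with Lemma \ref{nuestimates}, applying the latter at $R_0 = R \vee V^{-1}(1)$ exactly as you do.
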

\begin{proof}
In view of (\ref{rVd}) it is enough to show (\ref{ptestimates2}) and (\ref{ptestimates3}). 

Case 1. $r < V^{-1}(\sqrt{t})$, $t \in (0,1\vee V^2(R)]$, $r > 0$.

By the fact that $r \to p_t(r)$ is nonincreasing and Lemma \ref{GRTestimates} we get
\begin{equation}
\label{GRTestimates1}
p_t(r) \ge p_t(V^{-1}(\sqrt{t})) \ge 
c t \nu(V^{-1}(\sqrt{t})) \exp\left(\frac{-c_1 t}{V^2(V^{-1}(\sqrt{t}))}\right).
\end{equation}
Note that $V^2(V^{-1}(\sqrt{t})) = t$. Using this and Lemma \ref{nuestimates} (applied for $R_0 =R\vee  V^{-1}(1)$) we get that the right hand side of (\ref{GRTestimates1}) is bounded from below by 
$$
\frac{c t}{V^2(V^{-1}(\sqrt{t})) [V^{-1}(\sqrt{t})]^d} = c [V^{-1}(\sqrt{t})]^{-d}.
$$

On the other hand, by Lemma \ref{upperptr} and Lemma \ref{p0estimates} (applied for $T =1\vee V^2(R)$) we get
$$
p_t(r) \le p_t(0) \le c [V^{-1}(\sqrt{t})]^{-d}.
$$

Case 2. $r \ge V^{-1}(\sqrt{t})$, $t > 0$, $r \in [0,R]$.

Note that by (\ref{Vr}) we have $t/V^2(r) \le 1$. Using this, Lemma \ref{GRTestimates} and Lemma \ref{nuestimates} we get
$$
p_t(r) \ge c t \nu(r) \exp\left(\frac{-c_1 t}{V^2(r)}\right) \ge \frac{c t}{V^2(r) r^d}.
$$
By Lemma \ref{upperptr} we obtain 
$$
p_t(r) \le \frac{c t}{V^2(r) r^d}.
$$
\end{proof}

In the sequel we need estimates of $\frac{d}{dr} p_t(r)$. They are based on the following result.
\begin{theorem}\cite[Theorem 1.5]{KR2016}
\label{dertransition}
Let $X$ be a pure-jump isotropic L{\'e}vy process in $\R^d$ with the characteristic exponent $\psi$. We assume that its L{\'e}vy measure is infinite and has the density $\nu(x) = \nu(|x|)$ such that  
$\nu(r) $ is nonincreasing, absolutely continuous and $-\nu'(r)/r$ is nonincreasing.  We denote transition densities of $X$ by $p_t(x) = p_t(|x|)$. Then there exists a L{\'e}vy process $X_t^{(d+2)}$ in $\R^{d+2}$ with the characteristic exponent $\psi^{(d+2)}(\xi) = \psi(|\xi|)$, $\xi \in \R^{d+2}$ and the radial, radially nonincreasing transition density $p_{t}^{(d+2)}(x) = p_{t}^{(d+2)}(|x|)$ satisfying
\begin{equation}
\label{derivativeptr}
p_t^{(d+2)}(r) = \frac{-1}{2 \pi r} \frac{d}{dr} p_t(r), \quad \quad r > 0.
\end{equation}
Moreover,  $p_t^{(d+2)}$ is continuous at any $x\neq0$.
\end{theorem}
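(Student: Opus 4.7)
The plan is to define the candidate
$$
\tilde p_t(r) := -\frac{1}{2\pi r}\,\frac{d}{dr}p_t(r),\qquad r>0,
$$
and show that $x\mapsto \tilde p_t(|x|)$ is the transition density on $\R^{d+2}$ of an isotropic unimodal L\'evy process whose characteristic exponent is $\psi(|\xi|)$. The underlying mechanism is the classical ``dimension walk'' for Hankel transforms of radial functions: in $\R^d$ the Fourier transform of a radial function reduces to a Hankel transform with Bessel $J_{d/2-1}$, and the identity $\tfrac{d}{du}[u^{-\nu}J_\nu(u)]=-u^{-\nu}J_{\nu+1}(u)$ turns ``differentiation in $r$ followed by division by $r$'' into the shift $\nu\mapsto\nu+1$, i.e.\ $d\mapsto d+2$.

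First, $\tilde p_t\ge 0$ is immediate from the radial monotonicity of $p_t$. Next I would carry out the main Fourier computation. Starting from the Hankel inversion
$$
p_t(r)=(2\pi)^{-d/2}\int_0^\infty \rho^{d-1}\,\frac{J_{d/2-1}(\rho r)}{(\rho r)^{d/2-1}}\,e^{-t\psi(\rho)}\,d\rho,
$$
I would differentiate under the integral, apply the Bessel identity with $\nu=d/2-1$, and divide by $-2\pi r$, producing
$$
\tilde p_t(r)=(2\pi)^{-(d+2)/2}\int_0^\infty \rho^{d+1}\,\frac{J_{d/2}(\rho r)}{(\rho r)^{d/2}}\,e^{-t\psi(\rho)}\,d\rho,
$$
which is precisely the Hankel representation in dimension $d+2$ of a radial function with Fourier transform $e^{-t\psi(|\xi|)}$. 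The differentiation under the integral and all interchanges of limits are controlled by the Hartman-Wintner condition \eqref{logpsi}, which forces rapid decay of $e^{-t\psi(\rho)}$ as $\rho\to\infty$.

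From this Fourier identification I would deduce: evaluation at $\xi=0$ gives $\int \tilde p_t(|x|)\,dx=1$, so combined with nonnegativity $\tilde p_t(|\cdot|)$ is a probability density; multiplicativity of the Fourier transform yields the semigroup identity $\tilde p_s*\tilde p_t=\tilde p_{s+t}$; hence $\tilde p_t$ is the transition density of an isotropic L\'evy process $X^{(d+2)}$ on $\R^{d+2}$ with characteristic exponent $\psi(|\xi|)$. Applying the same dimension-walk identity to $\nu$ (via the radial L\'evy--Khintchine form) identifies the L\'evy density of $X^{(d+2)}$ as $-\nu'(r)/(2\pi r)$, which under our hypothesis that $-\nu'(r)/r$ is nonincreasing is radially nonincreasing. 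Thus $X^{(d+2)}$ is isotropic unimodal, and Watanabe's characterization of unimodal L\'evy processes forces $p_t^{(d+2)}=\tilde p_t$ to be radially nonincreasing. Continuity away from the origin follows from Hartman-Wintner transferred to $\R^{d+2}$: condition \eqref{logpsi} depends only on the growth of $\psi(\rho)$ as $\rho\to\infty$, so it holds equally well for $\psi^{(d+2)}(\xi)=\psi(|\xi|)$.

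The main obstacle is the rigorous justification of the Hankel manipulations -- differentiation under the integral, interchange of limits, and verifying that $-\nu'(r)/(2\pi r)$ is an admissible L\'evy density on $\R^{d+2}$ (i.e.\ integrable against $1\wedge|x|^2$). The last point reduces, via integration by parts against $-\nu'(r)$, to the known $(1\wedge r^2)$-integrability of $\nu(r)r^{d-1}$ and the vanishing of boundary terms at $0$ and $\infty$ (the latter from $\nu(r)\to 0$, the former from the finiteness of $\int_0^1 r^{d+1}\nu(r)\,dr$).
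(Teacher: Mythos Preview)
The paper does not prove this statement: Theorem~\ref{dertransition} is quoted verbatim from \cite[Theorem 1.5]{KR2016}, and no argument for it is given here. There is therefore no proof in the present paper to compare your proposal against.

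That said, your outline is the standard ``dimension walk'' argument and is essentially correct as a sketch. The Bessel identity $\tfrac{d}{du}[u^{-\nu}J_\nu(u)]=-u^{-\nu}J_{\nu+1}(u)$ is exactly what converts the $d$-dimensional Hankel inversion into the $(d+2)$-dimensional one under the operation $f\mapsto -(2\pi r)^{-1}f'(r)$, and the Hartman--Wintner condition~\eqref{logpsi} does give enough decay of $e^{-t\psi(\rho)}$ to justify differentiation under the integral and the smoothness/continuity claims. Your identification of the L\'evy density of $X^{(d+2)}$ as $-\nu'(r)/(2\pi r)$ also agrees with what the present paper quotes from the proof in \cite{KR2016} when establishing Lemma~\ref{processplus}. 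One point worth tightening: rather than running a parallel Hankel computation on $\nu$ itself (delicate, since $\nu$ is not globally integrable), it is cleaner to first conclude from your Fourier identity that $\tilde p_t$ is a probability convolution semigroup on $\R^{d+2}$---hence automatically the transition density of some L\'evy process with exponent $\psi(|\xi|)$---and only then recover its L\'evy density as the vague limit $\lim_{t\to 0}t^{-1}\tilde p_t(r)=-(2\pi r)^{-1}\nu'(r)$ on $(0,\infty)$, using $p_t(r)/t\to\nu(r)$ in $\R^d$. This sidesteps the direct verification that $-\nu'(r)/(2\pi r)$ satisfies the $(1\wedge|x|^2)$-integrability condition in $\R^{d+2}$.
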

\begin{lemma}
\label{processplus}
Let $X$ satisfy assumptions of Theorem \ref{main1}. Then the assumptions of Theorem \ref{dertransition} are satisfied. Denote by $X_t^{(d+2)}$, $\nu^{(d+2)}(x)$, $\psi^{(d+2)}(x)$ the corresponding process in $\R^{d+2}$, the density of its L{\'e}vy measure and its characteristic exponent, respectively. Then $X_t^{(d+2)}$ is a pure-jump isotropic unimodal L{\'e}vy process in $\R^{d+2}$, its L{\'e}vy measure is infinite and satisfies
\begin{equation}
\label{Levy2plus}
\forall{R > 0} \quad \nu^{(d+2)}(R) > 0.
\end{equation}
$\psi^{(d+2)}$ satisfies WLSC($\underline{\alpha},\theta_0,\underline{C}$) and WUSC($\overline{\alpha},\theta_0,\overline{C}$).
\end{lemma}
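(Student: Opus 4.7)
The plan is to check each claim in turn, most of which follow directly from the construction of $X^{(d+2)}$ in Theorem \ref{dertransition} together with the assumptions imposed on $X$ in Theorem \ref{main1}.

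First I would verify that Theorem \ref{dertransition} applies: its hypotheses (pure-jump isotropic with infinite L\'evy measure having a density $\nu(r)$ that is nonincreasing, absolutely continuous, and with $-\nu'(r)/r$ nonincreasing) are explicitly contained in the hypotheses of Theorem \ref{main1}. Theorem \ref{dertransition} then produces a L\'evy process $X^{(d+2)}$ in $\R^{d+2}$ with radial, radially nonincreasing transition density, so that $X^{(d+2)}$ is automatically isotropic and unimodal. That $X^{(d+2)}$ is pure-jump and that the L\'evy measure has a density follow from the fact that $\psi^{(d+2)}(\xi)=\psi(|\xi|)$ inherits from $\psi$ the absence of a Gaussian term and of a drift, and the density property of $\nu$ transfers to $\nu^{(d+2)}$ through the relation used in \cite{KR2016}.

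The key identity is
\begin{equation*}
\nu^{(d+2)}(r)=\frac{-\nu'(r)}{2\pi r},\qquad r>0,
\end{equation*}
which one reads off by taking $t\to 0^+$ in \eqref{derivativeptr} (passing to the L\'evy density via the standard $p_t(x)/t\to \nu(x)$ for $x\neq 0$, as already exploited in \cite{KR2016}). From this and the fact that $-\nu'(r)/r$ is nonincreasing, positivity $\nu^{(d+2)}(R)>0$ for all $R>0$ is immediate: otherwise $-\nu'(r)/r\equiv 0$ for $r\geq R$, hence $\nu'\equiv 0$ there, which combined with $\nu(\infty)=0$ gives $\nu\equiv 0$ on $[R,\infty)$, contradicting the strict positivity assumption in Theorem \ref{main1}. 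For the scaling conditions, since $\psi^{(d+2)}$ is a radial function equal to $\psi$, one simply has $\psi^{(d+2)}\in \WLSC{\la}{\theta_0}{\lC}\cap \WUSC{\ua}{\theta_0}{\uC}$ by hypothesis on $\psi$, with the same constants.

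The remaining point is to check that the L\'evy measure of $X^{(d+2)}$ is infinite, i.e.\ $\int_0^1 \nu^{(d+2)}(r)\,r^{d+1}\,dr=\infty$. This is where a small computation is needed. Using the identity for $\nu^{(d+2)}$ and integration by parts,
\begin{equation*}
\int_\varepsilon^1 \nu^{(d+2)}(r)\,r^{d+1}\,dr=\frac{1}{2\pi}\int_\varepsilon^1 (-\nu'(r))\,r^d\,dr=\frac{1}{2\pi}\Bigl(\nu(\varepsilon)\varepsilon^d-\nu(1)+d\int_\varepsilon^1 \nu(r)\,r^{d-1}\,dr\Bigr).
\end{equation*}
Infiniteness of the original L\'evy measure forces $\int_0^1 \nu(r)\,r^{d-1}\,dr=\infty$ (since the complementary integral over $[1,\infty)$ is finite), and $\nu(\varepsilon)\varepsilon^d\geq 0$, so letting $\varepsilon\to 0^+$ gives the required divergence. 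I expect this integration-by-parts step — more precisely, deducing infiniteness of $\nu^{(d+2)}$ from infiniteness of $\nu$ together with a careful handling of the boundary term $\nu(\varepsilon)\varepsilon^d$ — to be the only nontrivial point; everything else reduces to direct transfer of properties through the relations $\psi^{(d+2)}(\xi)=\psi(|\xi|)$ and $\nu^{(d+2)}(r)=-\nu'(r)/(2\pi r)$.
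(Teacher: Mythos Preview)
Your proposal is correct and follows essentially the same route as the paper: verify the hypotheses of Theorem~\ref{dertransition}, read off the unimodality and the scaling conditions from $\psi^{(d+2)}(|\xi|)=\psi(|\xi|)$, and use the identity $\nu^{(d+2)}(r)=-\nu'(r)/(2\pi r)$ to deduce strict positivity by the monotonicity/contradiction argument. The only point where you go beyond the paper is the infiniteness of the L\'evy measure of $X^{(d+2)}$: the paper simply cites \cite[proof of Theorem~1.5]{KR2016}, whereas you supply a direct integration-by-parts computation reducing it to $\int_0^1 \nu(r)r^{d-1}\,dr=\infty$; your argument is valid and self-contained.
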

\begin{proof}
The assumptions of Theorem \ref{dertransition}  are clearly  satisfied. The fact that $X_t^{(d+2)}$ is a pure-jump isotropic unimodal L{\'e}vy process in $\R^{d+2}$ follows directly from Theorem \ref{dertransition}. The properties of $\psi^{(d+2)}$ are also clear because $\psi^{(d+2)}(|x|) = \psi(|x|)$. The fact that the L{\'e}vy measure of $X_t^{(d+2)}$ is infinite is stated in \cite[proof of Theorem 1.5]{KR2016}. Now we will justify (\ref{Levy2plus}). Since $X_t^{(d+2)}$ is isotropic unimodal we know that $r \to \nu^{(d+2)}(r)$ is nonincreasing. By \cite[proof of Theorem 1.5]{KR2016} we have
$$
\nu^{(d+2)}(r) = \frac{-1}{2 \pi r} \frac{d \nu}{dr}(r), \quad r > 0.
$$
If $\nu^{(d+2)}(R) = 0$ for some $R > 0$ then $\nu^{(d+2)}(r) = 0$ for all $r \ge R$. But then we would have $\nu(r) = 0$ for all $r \ge R$ which contradicts assumptions in Theorem \ref{main1}.
\end{proof}
Using  Theorem \ref{dertransition} and Lemmas \ref{processplus}, \ref{ptestimates} we obtain
\begin{lemma}
\label{nablaptestimates}
Let $X$ satisfy assumptions of Theorem \ref{main1}. Fix $R > 0$. 
If $t \in (0,1]$, $r > 0$ and $r < V^{-1}(\sqrt{t})$ then 
$$
\left| \frac{d}{dr} p_t(r) \right| \approx r [V^{-1}(\sqrt{t})]^{-d-2},
$$
if $t > 0$, $r \in (0,R]$ and $r \ge V^{-1}(\sqrt{t})$ then 
$$
\left| \frac{d}{dr} p_t(r) \right| \approx \frac{t}{V^2(r) r^{d+1}}.
$$
For any $t \in (0,1]$, $r \in (0,R]$ we have
$$
\left| \frac{d}{dr} p_t(r) \right| \approx r \min\left\{[V^{-1}(\sqrt{t})]^{-d-2}, \frac{t}{V^2(r) r^{d+2}}\right\}.
$$
The comparability constants depend only on $d$, $\psi$ and $R$.
\end{lemma}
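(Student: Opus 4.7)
The plan is to reduce the estimates for $\frac{d}{dr} p_t(r)$ directly to the heat-kernel estimates already proved in Lemma \ref{ptestimates}, but applied to the auxiliary process $X_t^{(d+2)}$ in $\R^{d+2}$. Concretely, Theorem \ref{dertransition} gives the identity
\begin{equation*}
\left| \frac{d}{dr} p_t(r) \right| = 2\pi r \, p_t^{(d+2)}(r), \qquad r > 0,
\end{equation*}
so all three claimed comparabilities will follow at once by multiplying by $r$ the corresponding comparabilities for $p_t^{(d+2)}(r)$.

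First I would collect the hypotheses needed to apply Lemma \ref{ptestimates} to $X_t^{(d+2)}$ in place of $X_t$. By Lemma \ref{processplus}, the process $X_t^{(d+2)}$ is pure-jump, isotropic, unimodal in $\R^{d+2}$, with characteristic exponent $\psi^{(d+2)}(\xi)=\psi(|\xi|)$, so $\psi^{(d+2)}$ satisfies the same WLSC and WUSC conditions as $\psi$; moreover its L{\'e}vy measure is infinite and its radial density $\nu^{(d+2)}$ is strictly positive by \eqref{Levy2plus}. These are precisely the assumptions of Lemma \ref{ptestimates}. A small but essential observation is that the renewal function $V$ used in the estimates depends only on the one-dimensional characteristic exponent of the first coordinate; since $\psi^{(d+2)}(\xi_1,0,\ldots,0)=\psi(|\xi_1|)$, the ladder-height renewal function of $X_t^{(d+2)}$ coincides with $V$, and in particular $V^{-1}$ is the same function.

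Next I would simply transcribe the three comparabilities of Lemma \ref{ptestimates} for $p_t^{(d+2)}(r)$ (with $d$ replaced by $d+2$) and multiply by $r$. For $r<V^{-1}(\sqrt{t})$ and $t\in(0,1]$ we obtain $p_t^{(d+2)}(r)\approx [V^{-1}(\sqrt{t})]^{-(d+2)}$, hence $|\frac{d}{dr}p_t(r)|\approx r[V^{-1}(\sqrt{t})]^{-d-2}$. For $r\geq V^{-1}(\sqrt{t})$ and $r\in(0,R]$ we obtain $p_t^{(d+2)}(r)\approx t/(V^2(r)r^{d+2})$, hence $|\frac{d}{dr}p_t(r)|\approx t/(V^2(r)r^{d+1})$. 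Combining the two regimes via \eqref{rVd} yields the global comparability stated in the third line.

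The only mildly delicate point is bookkeeping the range of $t$ and $R$ in the two-regime case: Lemma \ref{ptestimates} is formulated with $t\in (0,1\vee V^2(R)]$ in the first regime, which is automatic under the hypothesis $t\in(0,1]$ of Lemma \ref{nablaptestimates}, and with $r\in[0,R]$, $t>0$ in the second regime, which is exactly what we assume. Finally, the comparability constants produced depend on $d+2$, $\psi^{(d+2)}$ and $R$, hence only on $d$, $\psi$ and $R$, matching the conclusion. The main obstacle is really conceptual rather than computational: it is the verification that the renewal function and the scaling data of $X_t^{(d+2)}$ agree with those of $X_t$, so that the estimates of Lemma \ref{ptestimates} transfer without loss.
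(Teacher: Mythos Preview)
Your proposal is correct and follows exactly the approach the paper takes: it invokes Theorem \ref{dertransition} to write $\bigl|\tfrac{d}{dr}p_t(r)\bigr|=2\pi r\,p_t^{(d+2)}(r)$, uses Lemma \ref{processplus} to check that $X^{(d+2)}$ meets the hypotheses of Lemma \ref{ptestimates} (with the same $V$), and then applies Lemma \ref{ptestimates} in dimension $d+2$.
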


\section{Gradient estimates}

In this section we prove Theorem \ref{main1}. In the whole section we suppose that the process $X$ satisfies assumptions of this theorem.

\begin{lemma} \label{derestimates1} For any $t \in (0,1]$, $r \in (0,2]$ we have
$$
\left|\frac{d}{dr} p_t(r)\right| \le  c \left(\frac{p_t(r)}{r}\wedge \frac{p_t(r)}{V^{-1}(\sqrt{t})}\right),
$$
where $c = c(d,\psi)$.
\end{lemma}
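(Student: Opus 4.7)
The plan is to derive this directly from the two-sided estimates established in Lemmas \ref{ptestimates} and \ref{nablaptestimates}, both applied with the auxiliary radius $R=2$. Since $t\in(0,1]$ and $r\in(0,2]$, we are in the range where those comparabilities hold with constants depending only on $d$ and $\psi$. I will split according to whether $r$ is smaller or larger than the natural scale $V^{-1}(\sqrt{t})$, and note that the quantity we want to bound by is
$$
\frac{p_t(r)}{r}\wedge\frac{p_t(r)}{V^{-1}(\sqrt{t})}=\frac{p_t(r)}{r\vee V^{-1}(\sqrt{t})}.
$$

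In the first case, $r<V^{-1}(\sqrt{t})$, Lemma \ref{nablaptestimates} gives $|p_t'(r)|\asymp r[V^{-1}(\sqrt{t})]^{-d-2}$, while Lemma \ref{ptestimates} gives $p_t(r)\asymp [V^{-1}(\sqrt{t})]^{-d}$. In this regime the minimum equals $p_t(r)/V^{-1}(\sqrt{t})\asymp[V^{-1}(\sqrt{t})]^{-d-1}$, and the inequality reduces to
$$
r[V^{-1}(\sqrt{t})]^{-d-2}\le [V^{-1}(\sqrt{t})]^{-d-1},
$$
which is immediate from $r<V^{-1}(\sqrt{t})$.

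In the second case, $r\ge V^{-1}(\sqrt{t})$, the same two lemmas yield $|p_t'(r)|\asymp t/(V^2(r)r^{d+1})$ and $p_t(r)\asymp t/(V^2(r)r^d)$, so that $|p_t'(r)|\asymp p_t(r)/r$. Here the minimum is $p_t(r)/r$ itself, and the claimed bound holds trivially.

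Combining both cases produces the asserted estimate with a constant depending only on $d$ and $\psi$ (the $R=2$ dependence is harmless since $R$ is fixed). I do not expect a real obstacle: the only care needed is to verify that the assumptions of Lemmas \ref{ptestimates} and \ref{nablaptestimates} are met uniformly on $(0,1]\times(0,2]$, in particular that $t\in(0,1\vee V^2(2)]$, which is clear since $t\le 1$.
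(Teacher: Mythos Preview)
Your proof is correct and follows essentially the same approach as the paper: both split into the two cases $r<V^{-1}(\sqrt{t})$ and $r\ge V^{-1}(\sqrt{t})$, and in each case apply Lemmas~\ref{ptestimates} and~\ref{nablaptestimates} (with $R=2$) to identify the minimum and verify the bound. The paper's argument is slightly terser but otherwise identical.
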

\begin{proof}
Case 1. $r < V^{-1}(\sqrt{t})$.

By Lemmas \ref{ptestimates} and \ref{nablaptestimates} we get 
$$ \left|\frac{d}{dr} p_t(r)\right| \approx
r [V^{-1}(\sqrt{t})]^{-d-2} \le 
c [V^{-1}(\sqrt{t})]^{-d-1} \approx
\frac{p_t(r)}{V^{-1}(\sqrt{t})} =
\left(\frac{p_t(r)}{r}\wedge \frac{p_t(r)}{V^{-1}(\sqrt{t})}\right).
$$

Case 2. $r \ge V^{-1}(\sqrt{t})$.

By Lemmas \ref{ptestimates} and \ref{nablaptestimates} we get 
$$
\left|\frac{d}{dr} p_t(r)\right| \approx
\frac{t}{V^2(r) r^{d+1}} \approx 
\frac{p_t(r)}{r} =
\left(\frac{p_t(r)}{r}\wedge \frac{p_t(r)}{V^{-1}(\sqrt{t})}\right).
$$
\end{proof}

\begin{lemma} 
\label{lem1}
For any $t \in (0,1]$, $x, y \in B_+(0,1)$ we have
$$
0\le p_{t}(x - y) - p_t(\hat{x} - y) \le c  |\hat{x} - x| \left(\frac{p_{t}(x - y)}{|x-y|} \wedge \frac{p_{t}(x - y)}{V^{-1}(\sqrt{t})} \right),
$$
where $c = c(d,\psi)$.
\end{lemma}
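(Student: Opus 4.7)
The plan is to pass to the radial representation of $p_t$. Writing $p_t(w) = f(|w|)$ with $f$ nonincreasing, and setting $r_1 = |x-y|$, $r_2 = |\hat{x}-y|$, I would first observe that
$$r_2^2 - r_1^2 = (x_1+y_1)^2 - (x_1-y_1)^2 = 4 x_1 y_1 \geq 0,$$
using $x_1, y_1 > 0$ for $x,y \in B_+(0,1)$, so $r_1 \leq r_2$ and the left-hand inequality $0 \le p_t(x-y)-p_t(\hat x - y)$ follows from radial monotonicity. The reverse triangle inequality gives $r_2 - r_1 \leq |\hat{x}-x|$, and $|x|, |y| < 1$ forces $r_1, r_2 < 2$, placing the whole segment $[r_1, r_2]$ in the range where Lemma \ref{derestimates1} applies.

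By the fundamental theorem of calculus together with Lemma \ref{derestimates1} (whose bound may be written as $|f'(r)| \leq c\, f(r)/\max(r, V^{-1}(\sqrt{t}))$ for $r\in(0,2]$), I obtain
$$f(r_1) - f(r_2) = \int_{r_1}^{r_2} (-f'(r))\,dr \leq c \int_{r_1}^{r_2} \frac{f(r)}{\max(r, V^{-1}(\sqrt{t}))}\,dr.$$
The main step is then the pointwise estimate
$$\frac{f(r)}{\max(r, V^{-1}(\sqrt{t}))} \leq c\, \frac{f(r_1)}{\max(r_1, V^{-1}(\sqrt{t}))}, \qquad r \in [r_1, r_2],$$
with $c = c(d,\psi)$. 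I would split into three cases according to where $V^{-1}(\sqrt{t})$ lies relative to $[r_1,r_2]$. When $r_1 \ge V^{-1}(\sqrt{t})$, so the whole interval is above the scale, the monotonicity of $f$ and of $r \mapsto 1/r$ yields the bound directly. When $r_2 \le V^{-1}(\sqrt{t})$, Lemma \ref{ptestimates} shows $f$ is comparable to the constant $[V^{-1}(\sqrt{t})]^{-d}$ on $[r_1,r_2]$, so both sides are comparable. The delicate case is $r_1 < V^{-1}(\sqrt{t}) < r_2$, where I would match the two regimes of Lemma \ref{ptestimates} at the transition point: on the left sub-interval $f(r) \approx [V^{-1}(\sqrt{t})]^{-d} \approx f(r_1)$, and on the right sub-interval $f(r) \leq f(V^{-1}(\sqrt t)) \approx f(r_1)$ combined with $1/r \le 1/V^{-1}(\sqrt t)$ gives $f(r)/r \le c\, f(r_1)/V^{-1}(\sqrt{t})$.

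Plugging the pointwise bound back, the integral is controlled by $c\,(r_2-r_1)\,f(r_1)/\max(r_1, V^{-1}(\sqrt{t}))$, and using $r_2 - r_1 \leq |\hat{x}-x|$ together with the identity
$$\frac{f(r_1)}{\max(r_1, V^{-1}(\sqrt{t}))} = \frac{p_t(x-y)}{|x-y|} \wedge \frac{p_t(x-y)}{V^{-1}(\sqrt{t})}$$
completes the proof. I expect the principal obstacle to be the straddling case in the pointwise estimate, since only there does one need to invoke both regimes of Lemma \ref{ptestimates} and rely on the fact that they agree (up to constants) at the scale $V^{-1}(\sqrt{t})$.
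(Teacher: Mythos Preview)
Your proof is correct and follows essentially the same route as the paper: the paper applies the mean value theorem rather than the fundamental theorem of calculus, evaluates Lemma~\ref{derestimates1} at the intermediate point $|x-y|+\xi$, and then uses radial monotonicity of $p_t$ together with $|x-y|+\xi\ge |x-y|$ to replace that point by $|x-y|$. Your three-case analysis is unnecessary, since $f$ is nonincreasing and $r\mapsto \max(r,V^{-1}(\sqrt t))$ is nondecreasing, so the pointwise bound $\tfrac{f(r)}{\max(r,V^{-1}(\sqrt t))}\le \tfrac{f(r_1)}{\max(r_1,V^{-1}(\sqrt t))}$ holds for all $r\ge r_1$ with $c=1$ without any appeal to Lemma~\ref{ptestimates}.
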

\begin{proof}
The inequality $p_t(x-y) - p_t(\hat{x} - y) \ge 0$ is clear because $|\hat{x} - y| \ge |x-y|$.

We also have
\begin{eqnarray}
\nonumber
p_t(x-y) - p_t(\hat{x}-y) &=& p_t(|x-y|) - p_t(|\hat{x}-y|) \\
\label{xyxi}
&=& (|x-y| - |\hat{x}-y|) D p_t(|x-y| + \xi),
\end{eqnarray}
where $\xi \in (0,|\hat{x}-y| - |x-y|)$ and $Dp_t(r) = \frac{d}{dr} p_t(r)$.
By Lemma \ref{derestimates1} this is bounded from above by
$$
c  |\hat{x} - x| \left(\frac{p_{t}(|x-y| + \xi)}{|x-y| + \xi} \wedge \frac{p_{t}(|x-y| + \xi)}{V^{-1}(\sqrt{t})} \right)
\le c  |\hat{x} - x| \left(\frac{p_{t}(x - y)}{|x-y|} \wedge \frac{p_{t}(x - y)}{V^{-1}(\sqrt{t})} \right).
$$
\end{proof}

Recall that for $r > 0$, $B=B(0,r)$, $t > 0$, $x, y \in B_+$ we have
$$
\tilde{p}_{B_+}(t,x,y) = p_{B}(t,x,y)- p_{B}(t,\hat{x},y).
$$
\begin{lemma}
\label{lem4} 
Let $r \in (0,1]$, $B = B(0,r)$, $x \in B_+(0,r/16)$, $y\in B_+$.  Then for any $t \in (0,1]$ we have
$$
0 \le \tilde{p}_{B_+}(t,x,y) \le 
c |\hat{x} - x|  \left(\frac{1}{r} \vee \frac{1}{V^{-1}(\sqrt{t})}\right) p_{B}(t,x,y),
$$
where $c = c(d,\psi)$.
\end{lemma}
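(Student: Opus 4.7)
The plan is to combine (\ref{key}) with Lemma \ref{lem1} to obtain the starting bound
\[
\tilde{p}_{B_+}(t,x,y) \le p_t(x-y) - p_t(\hat{x} - y) \le c|\hat{x} - x|\left(\frac{p_t(x-y)}{|x-y|} \wedge \frac{p_t(x-y)}{V^{-1}(\sqrt t)}\right) \le c|\hat{x} - x|\frac{p_t(x-y)}{r \wedge V^{-1}(\sqrt t)},
\]
where the last inequality uses $|x-y| \vee V^{-1}(\sqrt t) \ge V^{-1}(\sqrt t) \ge r \wedge V^{-1}(\sqrt t)$. The remaining task is to replace $p_t(x-y)$ by a constant multiple of $p_B(t,x,y)$ under the hypotheses.

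For $y$ in the interior of $B$ --- specifically, for $t \le V^2(r)$ and $|x-y| \le 45r/64$ --- the translated ball $B(x,15r/16)$ is contained in $B$ (since $|x|\le r/16$). By translation invariance one has $p_{B(x,15r/16)}(t,x,y) = p_{B(0,15r/16)}(t,0,y-x)$, and by the scaling properties of $V$ from Lemma \ref{Vscaling} (combined with (\ref{Vpsi}) and WLSC/WUSC for $\psi$) one has $V^2(15r/16)\approx V^2(r)$; then the local comparability (\ref{comp}) applied to $B(0,15r/16)$ gives $p_B(t,x,y) \ge p_{B(x,15r/16)}(t,x,y) \approx p_t(x-y)$, and the bound follows.

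The remaining configurations --- $y$ close to the spherical boundary of $B$, and $t > V^2(r)$ --- are handled by exploiting Theorem \ref{hk_kula2} in tandem with the isotropy identity $P^x(\tau_B > t/2) = P^{\hat{x}}(\tau_B > t/2)$ (valid since $\hat{x}$ is the reflection of $x$ under an isometry preserving $B$), and the key observation that $V^{-1}(\sqrt{t\wedge V^2(r)}) = r \wedge V^{-1}(\sqrt t)$. Applying Lemma \ref{lem1} at time $s = t \wedge V^2(r)$ yields $p_s(\hat{x}-y) \ge p_s(x-y)\bigl(1 - c|\hat{x}-x|/(r\wedge V^{-1}(\sqrt t))\bigr)$, which together with the factorization from Theorem \ref{hk_kula2} controls the difference $p_B(t,x,y) - p_B(t,\hat{x},y)$ in terms of $p_B(t,x,y)$. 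For $t > V^2(r)$, a Chapman--Kolmogorov argument for the difference process $\tilde{X}$, namely $\tilde{p}_{B_+}(t,x,y) = \int_{B_+} \tilde{p}_{B_+}(V^2(r),x,z)\,\tilde{p}_{B_+}(t-V^2(r),z,y)\,dz$ combined with the trivial $\tilde{p}_{B_+} \le p_B$ on the second factor, reduces the estimate to the case $t = V^2(r)$ already treated.

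The main obstacle is the boundary sub-case with $\delta_B(y) < V^{-1}(\sqrt t)$: here the direct comparison $p_t(x-y) \le c\,p_B(t,x,y)$ breaks down by the factor $V(\delta_B(y))/\sqrt t$, and the two-sided bounds of Theorem \ref{hk_kula2} produce a constant gap that obstructs a naive subtraction. This is overcome by a dichotomy on the size of $|\hat{x}-x|/(r\wedge V^{-1}(\sqrt t))$: when it is bounded below by a positive constant, the trivial inequality $\tilde{p}_{B_+}(t,x,y) \le p_B(t,x,y)$ already yields the claim; in the complementary regime, the refined Lemma \ref{lem1}-at-time-$s$ argument combined with the factorization from Theorem \ref{hk_kula2} closes the proof.
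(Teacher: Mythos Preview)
Your argument has a genuine gap in the boundary sub-case (say $t\le V^2(r)$ and $\delta_B(y)$ small), and the dichotomy you propose does not close it. In the ``complementary regime'' where $|\hat x-x|/(r\wedge V^{-1}(\sqrt t))$ is small, you want to subtract the two factorizations from Theorem~\ref{hk_kula2}. But that theorem only gives two-sided bounds with a fixed constant $c_1$: writing $F=\p^x(\tau_B>t/2)\p^y(\tau_B>t/2)$ and $s=t\wedge V^2(r)$, one has
\[
p_B(t,x,y)-p_B(t,\hat x,y)\;\le\; c_1 F\,p_s(x-y)-c_1^{-1}F\,p_s(\hat x-y)
\;\le\; \bigl(c_1-c_1^{-1}(1-\eps)\bigr)F\,p_s(x-y),
\]
where $\eps=c|\hat x-x|/(r\wedge V^{-1}(\sqrt t))$ comes from Lemma~\ref{lem1}. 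The bracket $c_1-c_1^{-1}(1-\eps)$ is a fixed positive number (essentially $c_1-c_1^{-1}$), not $O(|\hat x-x|)$. So you recover only the trivial bound $\tilde p_{B_+}\le C\,p_B$, which is exactly what you were trying to improve. The refinement of $p_s(\hat x-y)$ cannot compensate for the slack in the comparability constants.

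The paper avoids this difficulty by never attempting to subtract two $p_B$-estimates. Instead it uses the semigroup property \emph{before} taking the difference: with $t_0=V^2(r)\wedge t/2$ and $t_1=t-t_0$,
\[
\tilde p_{B_+}(t,x,y)=\int_{B_+}\tilde p_{B_+}(t_0,x,w)\,\tilde p_{B_+}(t_1,w,y)\,dw
\le \frac{c|\hat x-x|}{V^{-1}(\sqrt{t_0})}\int_{B_+}p_{t_0}(x-w)\,p_B(t_1,w,y)\,dw,
\]
using (\ref{key}) and Lemma~\ref{lem1} only on the first factor. The crucial point is that the second factor $p_B(t_1,w,y)$ already carries the correct boundary decay in $y$; one then applies the \emph{upper} bound of Theorem~\ref{hk_kula2} to it, pulls out $\p^y(\tau_B>t_1/2)$, replaces $\p^w$ by $\p^x$ (since $|x|<r/16$), convolves the free kernels, and finally uses the \emph{lower} bound of Theorem~\ref{hk_kula2} to reassemble $p_B(t,x,y)$. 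No subtraction of two-sided estimates is ever needed. Your Chapman--Kolmogorov reduction for $t>V^2(r)$ is in the right spirit; the missing ingredient is to use the same splitting uniformly in $t$ (with $t_0=V^2(r)\wedge t/2$) rather than only to reduce to a fixed time.
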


\begin{proof}
Let $t_0= V^2(r) \wedge \frac{t}{2}$ and  $t_1=t-t_0$. 
By the semigroup property and  (\ref{key})  
\begin{eqnarray}
\tilde{p}_{B_+}(t,x,y)&=&\int_{B_+} \tilde{p}_{B_+}(t_0,x,w)\tilde{p}_{B_+}(t_1,w,y)\, dw\nonumber\\
&\le& \int_{B_+} (p_{B}(t_0, x, w)- p_{B}(t_0, \hat{x}, w))p_{B}(t_1, w, y) \, dw \nonumber\\
&\le&\int_{B_+} (p_{t_0}(x-w)- p_{t_0}(\hat{x}-w))p_{B}(t_1, w, y) \, dw \nonumber\\
&\le& c  \frac{|\hat{x} - x|}{V^{-1}(\sqrt{t_0})} \int_{B_+} p_{t_0}(x - w) p_{B}(t_1, w, y) \, dw \label{SP}
\end{eqnarray}
where the last step follows from  Lemma \ref{lem1}.

By Theorem \ref{hk_kula2}, for $w \in B_+$, we have
$$
p_{B}(t_1, y, w)\le c\p^y\left(\tau_{B}>\frac{t_1}{2}\right)\p^w\left(\tau_{B}>\frac{t_1}{2}\right) 
p_{t_1\wedge V^2(r)}(y-w),
$$
 and since  $|x|\le r/16$ we get
$$
\p^w\left(\tau_{B}>\frac{t_1}{2}\right)\le c\p^x\left(\tau_{B}>\frac{t_1}{2}\right).
$$
Applying the last two estimates to (\ref{SP}) we obtain 
$$
\tilde{p}_{B+}(t, x, y)
\le  c \frac{|\hat{x} - x|}{V^{-1}(\sqrt{t_0})} \p^x\left(\tau_{B}>\frac{t_1}{2}\right) \p^y\left(\tau_{B}>\frac{t_1}{2}\right)   p_{t_0+(t_1 \wedge V^2(r))}(x - y).
$$
Note that  $t_0+(t_1 \wedge V^2(r))\approx  t \wedge V^2(r)$, which implies, by Lemma \ref{ptestimates}, that
$$
p_{t_0+(t_1 \wedge V^2(r))}(x - y) \approx p_{t \wedge V^2(r)}(x - y).
$$
Moreover,
$$
\p^x\left(\tau_{B}>\frac{t_1}{2}\right)\approx \p^x\left(\tau_{B}>\frac{t}{2}\right), 
\quad 
\p^y\left(\tau_{B}>\frac{t_1}{2}\right)\approx \p^y\left(\tau_{B}>\frac{t}{2}\right),  
$$
which follows from Theorem \ref{hk_kula2} and (\ref{eigenvalue}).

Finally, we infer that
$$
\tilde{p}_{B_+}(t, x, y) \le c \frac{|\hat{x} - x|}{V^{-1}(\sqrt{t_0})} p_{t \wedge V^2(r)}(x - y)\p^x\left(\tau_{B}>\frac{t}{2}\right)\p^y\left(\tau_{B}>\frac{t}{2}\right).
$$
Note that $V^{-1}(\sqrt{t_0}) = r \wedge V^{-1}(\sqrt{t/2})$. By scaling properties of $V^{-1}$ from  Lemma \ref{Vscaling} we get
$$
\frac{1}{V^{-1}(\sqrt{t_0})} = \frac{1}{r} \vee \frac{1}{V^{-1}(\sqrt{t/2})} \le
c \left( \frac{1}{r} \vee \frac{1}{V^{-1}(\sqrt{t})} \right).
$$
Observing, again by Theorem \ref{hk_kula2}, that we have
$$
p_B(t,x,y) \approx p_{t \wedge V^2(r)}(x - y) P^x\left(\tau_B > \frac{t}{2} \right) P^y\left(\tau_B > \frac{t}{2} \right)
$$
we complete the proof.
\end{proof}

\begin{lemma} 
\label{lem3}
Let $r \in (0,1]$, $B=B(0,r)$, $x\in B_+(0,r/16)$, $x = |x| e_1$, $y \in B_+(0,r/4)$. Then for every $t \in (0,1]$ we have
\begin{equation}
\label{lem3formula}
\tilde{p}_{B_+}(t,x,y) \le c \frac{|x - \hat{x}|}{|y|} p_B(t,x,y),
\end{equation}
where $c = c(d,\psi)$.
\end{lemma}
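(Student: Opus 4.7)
The plan is to split the analysis into three cases according to the relative sizes of $t$, $V^2(r)$, $|x-y|$, and $|y|$, applying a different previously-established tool in each case. The desired estimate is trivial in some regimes and requires a transfer from the free to the killed heat kernel in others.

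First I would dispose of the case $t \ge V^2(r)$. Here Lemma \ref{lem4} gives
$\tilde{p}_{B_+}(t,x,y) \le c|\hat{x}-x|\bigl(1/r \vee 1/V^{-1}(\sqrt{t})\bigr) p_B(t,x,y)$,
and since $t \ge V^2(r)$ forces $V^{-1}(\sqrt{t}) \ge r$, the maximum reduces to $1/r$. The hypothesis $|y| \le r/4$ gives $1/r \le 4/|y|$, yielding (\ref{lem3formula}) at once.

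Next, assume $t < V^2(r)$. Then the comparability $p_t(x-y) \approx p_B(t,x,y)$ from (\ref{comp}) is available (note $|x|,|y|\le r/4 < 3r/4$ and $r \le 1$). If $|x-y| \ge |y|/2$, I would combine (\ref{key}) with Lemma \ref{lem1} to write
\begin{equation*}
\tilde{p}_{B_+}(t,x,y) \le p_t(x-y) - p_t(\hat{x}-y) \le c|\hat{x}-x|\frac{p_t(x-y)}{|x-y|} \le \frac{2c\,|\hat{x}-x|}{|y|}\,p_t(x-y),
\end{equation*}
and then replace $p_t(x-y)$ by $p_B(t,x,y)$ via (\ref{comp}). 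Note that applying Lemma \ref{lem1} here requires $x,y \in B_+(0,1)$, which follows from $r \le 1$.

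The remaining subcase $|x-y| < |y|/2$ is where Lemma \ref{lem1} fails, since $1/|x-y|$ may be arbitrarily large. However, by the reverse triangle inequality $\bigl||x|-|y|\bigr| \le |x-y| < |y|/2$, which forces $|x| > |y|/2$, and hence $|\hat{x}-x| = 2|x| > |y|$. So the naive bound
$\tilde{p}_{B_+}(t,x,y) = p_B(t,x,y) - p_B(t,\hat{x},y) \le p_B(t,x,y)$
(valid because $p_B(t,\hat{x},y) \ge 0$) immediately gives $\tilde{p}_{B_+}(t,x,y) \le (|\hat{x}-x|/|y|)\,p_B(t,x,y)$.

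The main obstacle is identifying the right trichotomy: the sub-case $|x-y| < |y|/2$ is qualitatively different, because $x$ is then close to $y$ (and far from the hyperplane $\{x_1=0\}$ relative to $|y|$), so the naive bound is already sharp up to a constant; in the complementary regime $|x-y| \ge |y|/2$ we lose nothing by applying Lemma \ref{lem1} and absorbing $1/|x-y|$ into $1/|y|$. The case $t \ge V^2(r)$ is handled separately only because the comparability (\ref{comp}) is not directly available there; Lemma \ref{lem4} compensates by carrying $p_B$ on its right-hand side already.
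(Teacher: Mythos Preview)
Your proof is correct and follows essentially the same approach as the paper: both split into three cases handled respectively by the trivial bound $\tilde{p}_{B_+}\le p_B$, by Lemma~\ref{lem1} combined with (\ref{key}) and (\ref{comp}), and by Lemma~\ref{lem4}. The only difference is the choice of thresholds---the paper splits first on $|y|\le 4|x|$ versus $|y|>4|x|$ and then on $t\le V^2(|x-y|)$ versus $t>V^2(|x-y|)$, whereas you split on $t\ge V^2(r)$ versus $t<V^2(r)$ and then on $|x-y|\ge |y|/2$ versus $|x-y|<|y|/2$---but the underlying argument is the same.
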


\begin{proof} 

Case 1. $|y| \le 4 |x|$.

We have
$$
\tilde{p}_{B_+}(t,x,y) \le p_{B}(t,x,y) \le  \frac{2 |x - \hat{x}|}{|y|} p_B(t,x,y).
$$

Case 2. $|y| > 4 |x|$, $t \le V^2(|x-y|)$.

Note that $|y - x| \approx |y|$. By (\ref{key}), Lemma \ref{lem1} and (\ref{comp}) we get (\ref{lem3formula}).

Case 3. $|y| > 4 |x|$, $t > V^2(|x-y|)$.

By Lemma \ref{lem4} we arrive at
$$
\tilde{p}_{B_+}(t,x,y) 
\le c |\hat{x} - x|  \left(\frac{1}{r} \vee \frac{1}{V^{-1}(\sqrt{V^2(|x-y|)})}\right) p_{B}(t,x,y)
\le c \frac{|x - \hat{x}|}{|y|} p_B(t,x,y).
$$
\end{proof}

\begin{lemma}
\label{lem5}
Let $r \in (0,1]$, $t \in (0,1]$, $B=B(0,r)$, $x\in B_+(0,r/16)$, $y\in B_+\setminus B_+(0,r/4)$. Then 
$$
\tilde{p}_{B_+}(t,x,y)
\le c \frac {|\hat{x} - x|}{r}  p_{B}(t, x, y),
$$
where $c = c(d,\psi)$.
\end{lemma}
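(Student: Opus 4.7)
The approach will be a case analysis on the size of $t$ relative to $V^2(r)$, exploiting the geometric ingredient that $|y|\ge r/4$ together with $|x|\le r/16$ forces $|x-y|\ge 3r/16\approx r$.

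In the first regime $t\ge V^2(r)$, Lemma \ref{lem4} applies immediately: since $V^{-1}(\sqrt{t})\ge r$, the factor $\frac{1}{r}\vee\frac{1}{V^{-1}(\sqrt{t})}$ collapses to $\frac{1}{r}$, and the conclusion is exactly Lemma \ref{lem4}.

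In the regime $t<V^2(r)$, Lemma \ref{lem4} is too weak (it yields only a $\frac{1}{V^{-1}(\sqrt{t})}$ factor). The plan is to reprise the semigroup identity from the proof of Lemma \ref{lem4} with $t_0=t/2$ and $t_1=t-t_0$, and to split the integration region $B_+$ in $w$ according to whether $|x-w|\ge r/32$ or $|x-w|<r/32$. On $\{|x-w|\ge r/32\}$, the sharper form of Lemma \ref{lem1}, namely
$$\tilde p_{B_+}(t_0,x,w)\le c|\hat x-x|\,\frac{p_{t_0}(x-w)}{|x-w|}\le c|\hat x-x|\,\frac{p_{t_0}(x-w)}{r},$$
supplies the desired $\frac{1}{r}$ directly, and the rest of the Chapman--Kolmogorov argument of Lemma \ref{lem4} (using Theorem \ref{hk_kula2} and the fact that $\p^x(\tau_B>t/2)\approx 1$ because $\delta_B(x)\ge 15r/16$) produces $\le c\frac{|\hat x-x|}{r}p_B(t,x,y)$. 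On $\{|x-w|<r/32\}$, we have $w\in B_+(0,3r/32)$, so $\delta_B(w)\ge 29r/32$ and $\p^w(\tau_B>t_1/2)\approx 1$, while $|w-y|\ge 5r/32\approx r$; by Theorem \ref{hk_kula2} and Lemma \ref{ptestimates} this gives $p_B(t_1,w,y)\le c\,\p^y(\tau_B>t/2)\,t/(V^2(r)r^d)$, which is then integrated against the standard bound on $\tilde p_{B_+}(t_0,x,w)$ from Lemma \ref{lem1}.

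The main obstacle lies in the very small-$t$ sub-regime where $V^{-1}(\sqrt{t})\ll r$ \emph{and} $y$ is close to $\partial B$ (so $\p^y(\tau_B>t/2)\ll 1$). Here the standard semigroup bound leaves the factor $1/V^{-1}(\sqrt{t_0})$ on the small-$|x-w|$ piece, which is strictly weaker than $1/r$. The key extra input will be the pointwise L\'evy estimate
$$\nu(|x-y|)-\nu(|\hat x-y|)\le c\,\frac{|\hat x-x|}{r}\,\nu(|x-y|),$$
a consequence of the hypothesis that $-\nu'(r)/r$ is nonincreasing combined with Lemma \ref{nuestimates}. Together with Lemmas \ref{GRTestimates}--\ref{upperptr} this converts the direct bound $\tilde p_{B_+}(t,x,y)\le p_t(x-y)-p_t(\hat x-y)$ into an upper bound of the correct form $\frac{|\hat x-x|}{r}\cdot t\nu(r)$ in the small-$t$ regime, at which point Theorem \ref{hk_kula2} is used to match this against the lower estimate for $p_B(t,x,y)$, closing the argument.
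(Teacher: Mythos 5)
Your decomposition is fine in the regime $t\ge V^2(r)\wedge 1$ (identical to the paper, via Lemma \ref{lem4}), and your Chapman--Kolmogorov argument on $\{|x-w|\ge r/32\}$ would indeed close, but you have correctly located the difficulty in the piece $\{|x-w|<r/32\}$ when $t< V^2(r)$ and $y$ is near $\partial B$, and your proposed fix does not close it. The issue is quantitative: Lemma \ref{lem1} together with Lemma \ref{ptestimates} gives $\tilde p_{B_+}(t,x,y)\le c\,\frac{|\hat x-x|}{r}\,p_t(x-y)\approx c\,\frac{|\hat x-x|}{r}\,t\nu(r)$, but by Theorem \ref{hk_kula2} one has $p_B(t,x,y)\approx \p^y(\tau_B>t/2)\,p_t(x-y)$ (using $\p^x(\tau_B>t/2)\approx 1$), and the factor $\p^y(\tau_B>t/2)\approx V(\delta_B(y))/\sqrt t\wedge 1$ can be arbitrarily small. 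So matching $\frac{|\hat x-x|}{r}\,t\nu(r)$ ``against the lower estimate for $p_B(t,x,y)$'' does not yield an upper bound of the form $\frac{|\hat x-x|}{r}\,p_B(t,x,y)$ --- it loses exactly the boundary-decay factor in $y$. The L\'evy-kernel inequality $\nu(v-z)-\nu(\hat v-z)\lesssim\frac{|\hat v-v|}{r}\nu(v-z)$ is a true and useful fact (it is Lemma \ref{ABLevyquotient}), but applied pointwise to the free heat-kernel difference it only reproduces the same lossy bound.

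The way the paper recovers the factor $\p^y(\tau_B>t/2)$ is structurally different: after reducing to $t\le V^2(r)\wedge 1$ and $y\in B_+\setminus B_+(0,3r/4)$, it applies the strong Markov property and the Ikeda--Watanabe formula to the difference process $\tilde X$ started from $y$ and killed on exit from the annulus $D^*_+=(B(0,r)\setminus\overline{B(0,r/2)})_+$, writing $\tilde p_{B_+}(t,x,y)$ as an integral over the exit position $z\in B_+\setminus D^*_+$. Splitting that $z$-integral into $D_0=\{r/4\le|z|\le r/2\}$, $D_1=B_+(0,4|x|)$ and $D_2\setminus D_1$, the boundary decay in $y$ is captured by the harmonic-measure estimate $P^y(r/4\le|X(\tau_{D^*})|\le r/2)\le c\,V(\delta_B(y))/V(r)$ on the $D_0$ piece, while on $D_1$ and $D_2\setminus D_1$ the smallness $\frac{|\hat x-x|}{r}$ comes from the L\'evy-kernel estimate $\tilde\nu(w,z)\lesssim\frac{|z|}{r}\nu(w-z)$ and the reflected-heat-kernel bound $\tilde p_{B_+}(t-s,z,x)\lesssim\frac{|\hat x-x|}{|z|}p_{t-s}(x-z)$, with the remaining factors reassembling to $p_B(t,x,y)$. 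A forward Chapman--Kolmogorov split on $|x-w|$ does not produce the $V(\delta_B(y))/V(r)$ factor, and to complete the argument you would need to introduce some analogue of that exit-from-the-annulus estimate.
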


\begin{proof}
Note that for $t \in (V^2(r) \wedge 1,1]$ we have
$$
\frac{1}{V^{-1}(\sqrt{t})} \le \frac{1}{V^{-1}(\sqrt{V^2(r)})} = \frac{1}{r}.
$$
Hence the assertion of the lemma for $t \in (V^2(r) \wedge 1,1]$ follows from Lemma \ref{lem4}. So we may assume that $t \in (0,V^2(r) \wedge 1]$.

Observe that $|x-y| \ge 3r/16$. If $y \in B_+(0,3r/4) \setminus B_+(0,r/4)$ then by (\ref{key}), Lemma \ref{lem1} and (\ref{comp}) we get
$$
\tilde{p}_{B_+}(t,x,y) \le c \frac {|\hat{x} - x|}{r}  p_{t}( x- y)\le c \frac {|\hat{x} - x|}{r}  p_{B}(t, x, y),
$$
so we may assume that $y \in B_+ \setminus B_+(0,3r/4)$.

Let $D^*=B(0,r)\setminus \overline{B(0,r/2)}$, $D_0 = \{z \in \R_+^d: \, r/4 \le |z| \le r/2\}$, $D_1= B_+(0,4|x|)$ and $D_2=B_+(0,r/4)$.
By standard arguments (the strong Markov property of $\tilde{X}$ and (\ref{IWtilde2})) we have
$$
\tilde{p}_{B_+}(t, x, y)= \int_{D^*_+}\int_0^t \tilde{p}_{D^*_+}(s, y, w)\int_{B_+\setminus D^*_+ }\tilde{\nu}(w,z)\tilde{p}_{B_+}(t-s, z, x)dz \,ds \, dw.
$$
Splitting the integration we obtain
\begin{eqnarray*}\tilde{p}_{B_+}(t, x, y)
&=& \int_{D^*_+}\int_0^t \tilde{p}_{D^*_+}(s, y, w)\int_{D_0}\tilde{\nu}(w,z)\tilde{p}_{B_+}(t-s, z, x)dz \, ds \, dw\\
&+& \int_{D^*_+}\int_0^{t} \tilde{p}_{D^*_+}(s, y, w)\int_{D_{1} }\tilde{\nu}(w,z)\tilde{p}_{B_+}(t-s, z, x) \, dz \, ds \, dw\\
&+& \int_{D^*_+}\int_0^{t} \tilde{p}_{D^*_+}(s, y, w)\int_{D_{2}\setminus D_{1} }\tilde{\nu}(w,z)\tilde{p}_{B_+}(t-s, z, x) \, dz \, ds \, dw\\
&=& \text{I} +\text{II}+ \text{III}.
\end{eqnarray*}

First we estimate $\text{I}$. By (\ref{key}) and Lemma \ref{lem1}, for $ r/4\le|z|\le r/2$, $s \in (0,t)$, we have
$$
\tilde{p}_{B_+}(t-s, z, x)\le c  | x-\hat{x}| \frac{p_{t-s}(x - z)}{|x-z|}.
$$
Since $|x - z|\approx |x - y|\approx r$,  by Lemma \ref{ptestimates} and the subadditivity of $V$, this is bounded from above by
$$
 c  | x-\hat{x}| \frac{p_{t}(x - y)}{r}.
$$
Hence, using the estimate $P^y(r/4\le |X(\tau_{D^*})|\le r/2)\le c\frac {V(\delta_B(y))} {V(r)}$ \cite[see Lemma 5.7]{KR2016}, we obtain
\begin {eqnarray*}
\text{I}&\le& c \frac{|x-\hat{x}|}{r} {p}_t ( x-y) \int_{D^*} \int_{0}^t {p}_{D^*}(s, y, w)\int_{D_0}{\nu}(w-z) \, dz \, ds \, dw\\
&\le&
c \frac{|x-\hat{x}|}{r} {p}_t ( x-y) P^y(r/4\le |X(\tau_{D^*})|\le r/2) \\
&\le&
c \frac{|x-\hat{x}|}{r} {p}_t ( x-y)\frac {V(\delta_B(y))} {V(r)}\\ 
&\le &
c \frac{|x-\hat{x}|}{r} {p}_B(t, x,y), 
\end{eqnarray*}
where the last step follows from  Theorem  \ref{hk_kula2} and (\ref{eigenvalue}). 

By Lemmas \ref{ABLevyquotient}, \ref{nuestimates} and the subadditivity of $V$ we obtain
\begin{equation}
\label{Levyest}
\tilde{\nu}(w,z)\le c |z|\frac {{\nu}(w-z)}{r} \approx |z|\frac {{\nu}(x-y)}{r},  
\end{equation}
for $w\in D_+^*$ and $z\in D_{2}$.
In particular,  $\tilde{\nu}(w,z)\le \frac {c}{r} |x-\hat{x}|{\nu}(w-z)$, for $w\in  D_+^*$ and $z\in D_{1}$. Hence,  we get
\begin {eqnarray*}
\text{II}&\le& 
c \frac {|x-\hat{x}|}{r} \int_{D^*} \int_{0}^t {p}_{D^*}(s, y, w)\int_{ D_{1}} \nu(w-z){p}_{B}(t-s, z, x)\, dz \, ds \, dw \\
&\le& c \frac {|x-\hat{x}|}{r} {p}_{B}(t, y, x).
\end{eqnarray*}
Next, using (\ref{key}) and Lemma \ref{lem1}, for $s \in (0,t)$, $z\in D_{2}\setminus D_{1}$ we get
\begin{equation}
\label{tildepDt}
\tilde{p}_{B_+}(t-s, z, x)\le c  \frac{| x-\hat{x}|}{|x-z|} p_{t-s}(x - z)
\le c \frac{| x-\hat{x}|}{|z|} p_{t-s}(x - z).
\end{equation}
Moreover, since $0 < t \le V^2(r)$,  by (\ref{comp}),  for $s \in (0,t)$, $z\in D_{2}\setminus D_{1}$ we get
\begin{equation}
\label{ptscomp}
p_{t-s}(x - z)\approx {p}_{B}(t-s, z, x).
\end{equation}
Combining  (\ref{Levyest}), (\ref{tildepDt}) and  (\ref{ptscomp}) we obtain 
\begin {eqnarray*}
\text{III}&\le&
c  \frac{|x-\hat{x}|}{r} \int_{D^*_+}\int_{0}^t \tilde{p}_{D^*_+}(s, y, w)\int_{ D_{2} \setminus D_1}{\nu}(w-z){p}_{B}(t-s, z, x)\, dz \,ds \, dw\\&\le&
c  \frac{|x-\hat{x}|}{r} \int_{D^*} \int_{0}^t {p}_{D^*}(s, y, w)\int_{ D_{2} \setminus D_1} {\nu}(w-z){p}_{B}(t-s, z, x) \, dz \, ds \, dw \\
&\le& c  \frac{|x-\hat{x}|}{r} {p}_{B}(t, y, x).
\end{eqnarray*}
The proof is completed.
\end{proof}

\begin{proposition}\label{m_estimate}
Let $D$ be an open set, $0\in D$ and $r=\delta_D(0)\wedge1$. Let $x=|x|e_1, \ |x| < r/16$ and $y\in D$. For any $t \in (0,1]$ we have 
\begin{equation}
\label{pDestimate}
|{p}_{D}(t, x, y)- {p}_{D}(t, \hat{x}, y) | \le c |\hat{x} - x| \left[\frac{1}{r} \vee \frac{1}{V^{-1}(\sqrt{t})}\right] p_{D}(t, x, y),
\end{equation}
where $c = c(d,\psi)$.
\end{proposition}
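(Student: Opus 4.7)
The strategy is to reduce to the previously established ball-case estimates via the strong Markov property at the first exit time from $B := B(0,r) \subset D$. Since $r = \delta_D(0) \wedge 1 \le \delta_D(0)$, the ball $B$ lies inside $D$, and both $x, \hat x$ are at distance less than $r/16$ from the origin. By Hunt's formula,
\[
p_D(t,x,y) = p_B(t,x,y) + E^x\bigl[p_D(t-\tau_B, X_{\tau_B}, y);\, \tau_B < t\bigr],
\]
so setting $J(x) := E^x[p_D(t - \tau_B, X_{\tau_B}, y); \tau_B < t]$ and subtracting the analogous identity for $\hat x$ gives
\[
p_D(t,x,y) - p_D(t,\hat x,y) = \bigl[p_B(t,x,y) - p_B(t,\hat x,y)\bigr] + \bigl[J(x) - J(\hat x)\bigr].
\]
Both terms will be controlled by $c|\hat x - x|\bigl[\tfrac{1}{r} \vee \tfrac{1}{V^{-1}(\sqrt t)}\bigr] p_D(t,x,y)$, using $p_B \le p_D$ on $B$ and the fact that, by Hunt's formula, $\int_0^t\!\!\int h_B(x,s,z) p_D(t-s,z,y) dz\, ds \le p_D(t,x,y)$.

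The ball difference is handled by a short case analysis on the location of $y$. If $y \in B_+$, then $p_B(t,x,y) - p_B(t,\hat x,y) = \tilde p_{B_+}(t,x,y)$, so one applies Lemma \ref{lem3} for $|y| < r/4$ and Lemma \ref{lem5} for $|y| \ge r/4$ (or Lemma \ref{lem4} uniformly). If $y \in B_-$, the ball symmetry $p_B(t,x,y) = p_B(t,\hat x,\hat y)$ gives $|p_B(t,x,y) - p_B(t,\hat x,y)| = \tilde p_{B_+}(t,x,\hat y)$, reducing again to the previous lemmas applied at $\hat y \in B_+$. If $y \notin B$, both terms vanish. This produces
\[
|p_B(t,x,y) - p_B(t,\hat x,y)| \le c|\hat x - x| \bigl[\tfrac{1}{r} \vee \tfrac{1}{V^{-1}(\sqrt t)}\bigr] p_D(t,x,y).
\]

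For $J(x) - J(\hat x)$, the Ikeda--Watanabe formula (\ref{IW2}) yields
\[
J(x) - J(\hat x) = \int_0^t \int_{B^c} \bigl[h_B(x,s,z) - h_B(\hat x,s,z)\bigr] p_D(t - s, z, y) \, dz\, ds.
\]
Using the symmetry of $B$, namely $p_B(s,\hat x, w) = p_B(s, x, \hat w)$, and changing variable $w \mapsto \hat w$ in the integral defining $h_B(\hat x, s, z)$, one obtains
\[
h_B(x,s,z) - h_B(\hat x,s,z) = \int_{B_+} \tilde p_{B_+}(s,x,w)\bigl[\nu(w-z) - \nu(\hat w - z)\bigr] dw,
\]
which equals $\tilde h_B(x,s,z) \ge 0$ for $z \in \R^d_+$ and, by the analogous computation with $z$ replaced by $\hat z$ together with the identity $h_B(x,s,z) = h_B(\hat x, s, \hat z)$, equals $-\tilde h_B(x, s, \hat z) \le 0$ for $z \in \R^d_-$. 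Splitting the integral over $B^c$ into its $\R^d_\pm$ parts and performing the change of variable $z \mapsto \hat z$ on the negative part,
\[
|J(x) - J(\hat x)| \le \int_0^t \int_{\R^d_+ \setminus B} \tilde h_B(x,s,z) \bigl[p_D(t-s,z,y) + p_D(t-s,\hat z,y)\bigr] dz\, ds,
\]
where $p_D(t-s,\cdot,y)$ vanishes off $D$.

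The closing step is the pointwise estimate
\[
\tilde h_B(x,s,z) \le c|\hat x - x| \bigl[\tfrac{1}{r} \vee \tfrac{1}{V^{-1}(\sqrt t)}\bigr] h_B(x,s,z),
\]
after which Hunt's formula applied to $p_D$ (in both $z$ and $\hat z$) finishes the proof. This last step is the main obstacle: a naive use of Lemma \ref{lem4} produces the factor $\tfrac{1}{r} \vee \tfrac{1}{V^{-1}(\sqrt s)}$, which is not controlled by the $t$-version when $s \ll t$. To avoid this one uses the $s$-independent bounds provided by Lemmas \ref{lem3} and \ref{lem5}: the part of the $w$-integration with $|w| \ge r/4$ immediately gives the $\tfrac{1}{r}$ factor via Lemma \ref{lem5}, while the delicate near-origin part with $|w| < r/4$ requires absorbing the $\tfrac{1}{|w|}$ weight using the on-diagonal bounds on $p_B(s,x,w)$ (Lemmas \ref{p0estimates}, \ref{ptestimates}), the doubling of $\nu$ on $B^c$ (Lemma \ref{nuestimates}), and the scaling of $V^{-1}$ (Lemma \ref{Vscaling}).
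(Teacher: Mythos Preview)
Your overall architecture matches the paper's: decompose $p_D$ via the strong Markov property at $\tau_B$ with $B=B(0,r)$, symmetrize the resulting integrals using the reflection $w\mapsto\hat w$, and split the $w$-integration into $|w|<r/4$ and $|w|\ge r/4$, applying Lemmas \ref{lem3} and \ref{lem5} respectively. The $|w|\ge r/4$ part is fine.

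The gap is in your treatment of the near-origin piece $|w|<r/4$. The paper does \emph{not} absorb the $1/|w|$ weight from Lemma \ref{lem3} using on-diagonal heat-kernel bounds; instead it uses Lemma \ref{ABLevyquotient} on the L\'evy-measure difference. Concretely, for $w\in B_+(0,r/4)$ and $z\in D\setminus B$ one has $|w-z|\ge 3r/4$ and $|\hat w - w|=2w_1\le 2|w|$, so Lemma \ref{ABLevyquotient} (applied with the roles of its two arguments interchanged, using $\tilde\nu(w,z)=\tilde\nu(z,w)$) gives
\[
|\nu(w-z)-\nu(\hat w-z)|\le c\,\frac{|w|}{r}\,\nu(w-z).
\]
This factor $|w|$ \emph{exactly cancels} the $1/|w|$ from Lemma \ref{lem3}, yielding $c\,\frac{|x-\hat x|}{r}\,p_B(s,x,w)\nu(w-z)$, which integrates to $c\,\frac{|x-\hat x|}{r}\,h_B(x,s,z)$ (or, in the paper's formulation, to $c\,\frac{|x-\hat x|}{r}\,k_D(t,x,y)$). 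Your proposed mechanism---absorbing $1/|w|$ via Lemmas \ref{p0estimates}, \ref{ptestimates}, \ref{nuestimates}, \ref{Vscaling}---does not produce this: for small $s$ the integral $\int_{|w|<r/4}\frac{1}{|w|}p_B(s,x,w)\,dw$ is of order $[V^{-1}(\sqrt s)]^{-1}$, not $1/r$, which is exactly the bad $s$-dependence you were trying to avoid.

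A secondary point: by working pointwise in $z$ and then splitting $z\in\R^d_\pm$, you end up with a bound of the form $c\,\frac{|x-\hat x|}{r}\,[p_D(t,x,y)+p_D(t,\hat x,y)]$ rather than $c\,\frac{|x-\hat x|}{r}\,p_D(t,x,y)$. The paper avoids this asymmetry by integrating over $z$ first, setting $g_s(w)=\int_{D\setminus B}\nu(w-z)p_D(t-s,z,y)\,dz$ and bounding $|g_s(w)-g_s(\hat w)|\le c\frac{|w|}{r}g_s(w)$ directly; for the outer range $|w|\ge r/4$ it then uses $p_B(s,x,w)\approx p_B(s,0,w)=p_B(s,0,\hat w)$ to fold $g_s(\hat w)$ back into $k_D(t,x,y)$.
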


\begin{proof}
We set $B=B(0,r)$ and we put
$$
k_D(t, x, y)= \int_{ B}\int_0^t p_{B}(s, x, w)\int_{D\setminus B } \nu(w-z) p_{D}(t-s, z, y) \, dz \, ds \, dw.
$$  
By standard arguments (the strong Markov property and (\ref{IW2}))
$$
p_{D}(t, x, y)= p_B(t, x, y)+ k_D(t, x, y).
$$
Lemma \ref{lem4} yields the estimate 
\begin{equation}|{p}_{B}(t, x, y) - {p}_{B}(t, \hat{x}, y)|\le c |\hat{x} - x| \left[\frac{1}{r} \vee \frac{1}{V^{-1}(\sqrt{t})}\right] p_{D}(t, x, y).\label{delta}\end{equation}

Next, we  estimate $|k_D(t, x, y) - k_D(t, \hat{x}, y)|$. For $s \in (0,t)$, $w \in B$, let 
$$
g_s(w)= \int_{D\setminus B }{\nu}(w-z){p}_{D}(t-s, z, y) \, dz.
$$
Note that for $w \in B_+(0, r/4)$ and $z\in D\setminus B$ we have, due to Lemma \ref{ABLevyquotient},
$$
|\nu(w-z)-\nu(\hat{w}-z)|\le c \frac{|w|}{r} \nu(w-z).
$$
Hence, for $s \in (0,t)$, $w \in B_+(0, r/4)$, 
\begin{eqnarray}
\nonumber
|g_s(w)-g_s(\hat{w})| &\le& \int_{D\setminus B }|{\nu}(w-z)-{\nu}(\hat{w}-z)|{p}_{D}(t-s, z, y) \, dz\\
\nonumber
&\le & c \frac{|w|}{r} \int_{D\setminus B }{\nu}(w-z){p}_{D}(t-s, z, y) \, dz\\
\label{g_estimate3}
&= & c \frac{|w|}{r} g_s(w).
\end{eqnarray}
By simple manipulations we obtain
\begin{eqnarray*}
k_D(t, x, y)-k_D(t, \hat{x}, y)&=& \int_{ B_+}\int_0^t( {p}_{B}(s, x, w) -{p}_{B}(s, \hat{x}, w))(g_s(w)-g_s(\hat{w})) \, ds \, dw\\
&=& \int_{ B_+(0, r/4)}\int_0^t +  \int_{B_+\setminus B_+(0, r/4)}\int_0^t\\
&=&  \text{I} + \text{II}. 
\end{eqnarray*}

By Lemma \ref{lem3} and (\ref{g_estimate3}) we get 
\begin{eqnarray*}
|\text{I}|&\le& c \int_{  B_+(0, r/4)}\int_0^t( {p}_{B}(s, x, w) -{p}_{B}(s, \hat{x}, w))\frac{|w|}{r}g_s(w) \, ds \, dw\\
&\le & c \frac{|x-\hat{x}|}{r} \int_{ B(0, r/4)} \int_0^t p_{B}(s,x,w) g_s(w)\, ds \, dw\\ 
&\le&  c \frac{|x-\hat{x}|}{r} k_D(t, x, y).
\end{eqnarray*}

Note that by Theorem \ref{hk_kula2}, the subadditivity of $V$ and Lemma \ref{ptestimates}, for $s \in (0,t)$ and $w \in B \setminus B(0,r/4)$, we have
$$
p_B(s,x,w) \approx p_B(s,0,w).
$$
Using this and Lemma \ref{lem5} we get 
\begin{eqnarray*}|\text{II}|&\le& 
c \frac{|x-\hat{x}|}{r} \int_{B}\int_0^t p_{B}(s, 0, w) (g_s(w)+g_s(\hat{w})) \, ds \, dw\\
&=& 2c \frac{|x-\hat{x}|}{r} \int_{B}\int_0^t p_{B}(s, 0, w) g_s(w) \, ds \, dw\\
&\approx&  \frac{|x-\hat{x}|}{r} \int_{B}\int_0^t p_{B}(s, x, w) g_s(w) \, ds \, dw\\
&=& \frac{|x-\hat{x}|}{r} k_D(t,x,y).
\end{eqnarray*}
Hence, 
 $$|k_D(t, x, y) - k_D(t, \hat{x}, y)|\le c\frac{|x-\hat{x}|}{r} k_D(t,x,y)$$
which combined with the estimate (\ref{delta}) completes the proof. 

\end{proof}

\begin{lemma}
\label{existence}
Let $D \subset \R^d$ be an open set. $\nabla_x p_D(t,x,y)$ is well defined for any $t > 0$, $x,y \in D$. 
\end{lemma}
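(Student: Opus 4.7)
My plan is to establish differentiability of $x \mapsto p_D(t, x, y)$ at each point $x_0 \in D$ by combining Proposition \ref{m_estimate}, via a shift of origin, with the smoothness of the free transition density $p_t$ asserted in (\ref{logpsi}) and the Preliminaries. After translating coordinates so $x_0 = 0 \in D$ and using the isotropy of $X$ to rotate a chosen coordinate direction onto $e_1$, set $r = \delta_D(0) \wedge 1 > 0$. For a small increment $h > 0$ I would re-apply Proposition \ref{m_estimate} after shifting its origin to the midpoint $(h/2)e_1$: the pair $(he_1, 0)$ then plays the role of $(x, \hat x)$, and the proposition gives
\[
|p_D(t, he_1, y) - p_D(t, 0, y)| \le c\, h\, \bigl[\tfrac{1}{r_h} \vee \tfrac{1}{V^{-1}(\sqrt{t})}\bigr]\, p_D(t, he_1, y),
\]
where $r_h = \delta_D((h/2)e_1) \wedge 1 \to r$ as $h \to 0$. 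This bound delivers local Lipschitz continuity of $p_D(t,\cdot,y)$ near $x_0$, which is the principal analytic input.

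To promote local Lipschitz continuity to classical differentiability, the plan is to use the decomposition $p_D(t, x, y) = p_t(x - y) - q(t, x, y)$, where $q(t, x, y) = E^x[p_{t - \tau_D}(X(\tau_D) - y);\, \tau_D < t]$. The first summand is smooth in $x$ (Preliminaries). For the remainder, apply the Ikeda--Watanabe formula (\ref{IW2}) and interchange the order of integration to write
\[
q(t, x, y) = \int_0^t \int_D p_D(s, x, w)\, \Phi(s, w, y)\, dw\, ds, \qquad \Phi(s, w, y) = \int_{D^c} \nu(w - z)\, p_{t - s}(z - y)\, dz,
\]
where $\Phi(s,w,y)$ is continuous and bounded in $w$ locally (by Lemma \ref{nuestimates} and the boundedness of $p_{t - s}$) and does not depend on $x$. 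The first step, applied at intermediate time $s$, supplies a Lipschitz bound on $p_D(s, \cdot, w)$ that, combined with Rademacher's theorem in $x$, legitimizes differentiation under the integral via dominated convergence.

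The main obstacle will be verifying that the $x$-Lipschitz constant of $p_D(s,\cdot,w)$ is integrable against $\Phi(s,w,y)\,dw\,ds$ uniformly in $h$, in particular near $s = 0$, where the factor $1/V^{-1}(\sqrt{s})$ blows up. I would handle this by splitting the $s$-integral at a small cut-off $s_0$: for $s \le s_0$ I would replace the Lipschitz bound by the direct pointwise estimate from Lemma \ref{derestimates1} together with the comparison (\ref{comp}) for $p_D(s,\cdot,w)$; for $s > s_0$ the Lipschitz constant is bounded by a multiple of $[1/r]\vee[1/V^{-1}(\sqrt{s_0})]$ and the weight $p_D(s, 0, w)\Phi(s,w,y)$ has finite $\,dw\,ds$-integral (equal to $q(t,0,y) < \infty$). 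Passing the limit $h \to 0$ inside yields $\partial q(t, 0, y)/\partial x_1$, and by symmetry across the coordinate directions the full gradient $\nabla_x p_D(t, x_0, y)$ exists at the arbitrary point $x_0 \in D$.
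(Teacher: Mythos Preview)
Your plan has a genuine gap at the dominated convergence/Rademacher step. To pass the limit $h\to 0$ inside
\[
\frac{q(t,he_1,y)-q(t,0,y)}{h}=\int_0^t\!\int_D \frac{p_D(s,he_1,w)-p_D(s,0,w)}{h}\,\Phi(s,w,y)\,dw\,ds,
\]
dominated convergence requires the integrand to converge pointwise for (almost) every $(s,w)$. But pointwise convergence of that difference quotient is precisely the existence of $\partial_{x_1}p_D(s,0,w)$, which is the statement you are trying to prove (at the earlier time $s$). The Lipschitz bound from Proposition~\ref{m_estimate} only controls the size of the quotient, not its convergence; Rademacher's theorem then yields differentiability of $p_D(s,\cdot,w)$ only at a.e.\ point, whereas you need it at the \emph{specific} point $0$. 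Without an independent argument for pointwise convergence (or some induction in $t$ that you do not set up), the step ``passing the limit $h\to0$ inside'' is unjustified. There are further technical issues as well: the representation via (\ref{IW2}) is stated only for bounded $D$ and for $B\subset(\overline D)^c$, so for a general open set $D$ the possible mass of $X(\tau_D)$ on $\partial D$ is unaccounted for; and Proposition~\ref{m_estimate} is restricted to $t\le 1$.

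The paper's argument avoids all of this by a single observation: use symmetry $p_D(t,x,y)=p_D(t,y,x)$ and write Hunt's formula from the $y$ side,
\[
p_D(t,x,y)=p_t(x-y)-E^{y}\bigl[p_{t-\tau_D}(X(\tau_D)-x);\,\tau_D<t\bigr].
\]
Now $x$ appears only inside the smooth integrand $p_{t-\tau_D}(X(\tau_D)-x)$, while the expectation is under $P^y$ and hence does not depend on $x$. Since $X(\tau_D)\in D^c$ one has $|X(\tau_D)-x|\ge\delta_D(x)$, and Theorem~\ref{dertransition} with Lemma~\ref{upperptr} give the uniform bound
\[
|\nabla_x p_s(X(\tau_D)-x)|\le \frac{c\,t}{V^2(\delta_D(x))\,\delta_D(x)^{d+1}},\qquad 0<s<t,
\]
so bounded convergence immediately legitimizes differentiation under $E^y$. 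This is a three-line proof; your route through Proposition~\ref{m_estimate}, Ikeda--Watanabe, and Rademacher is both more elaborate and, as it stands, incomplete.
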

\begin{proof}
Recall that 
$$
p_D(t,x,y) = p_t(x-y) - E^y(p_{t-\tau_D}(X(\tau_D)-x), \tau_D < t), \quad t > 0, \, x,y \in D.
$$
Since $\psi$ satisfies the Hartman-Wintner condition it is well known that for each $t > 0$ the function $x \to p_t(x)$ has derivatives of all orders on $\R^d$ \cite[Lemma 3.1]{KS2013}. By Theorem \ref{dertransition} and Lemma \ref{upperptr} for any $s \in (0,t)$, $x \in D$, $\nabla_x p_s(x-X(\tau_D))$ is well defined and
$$
|\nabla_x p_s(x-X(\tau_D))| \le \frac{c t}{V^2(\delta_D(x)) \delta_D^{d+1}(x)},
$$
where $c = c(d,\psi)$. 

So by the bounded convergence theorem $\nabla_x E^y(p_{t-\tau_D}(X(\tau_D)-x), \tau_D < t)$
is well defined for any $t > 0$, $x,y \in D$.
\end{proof}

\begin{proof}[proof of Theorem \ref{main1}]
The existence of $\nabla_x p_D(t,x,y)$ follows from Lemma \ref{existence}. 

Denote $D_i p_D(t,x,y) = \frac{\partial}{\partial x_i} p_D(t,x,y)$. Choose $z, y \in D$ and put $r = \delta_D(z) \wedge 1$. We will estimate $D_1 p_D(t,z,y)$. Estimates for $D_i p_D(t,z,y)$, $i \ne 1$ may be obtained in the same way. We may assume that $z = 0$. Choose $\varepsilon \in (0,r/16)$. Putting $x = h e_1$, ($h \in (0,\varepsilon)$) in (\ref{pDestimate}) we obtain
$$
\sup_{0 < h < \varepsilon} \left|\frac{p_D(t,he_1,y) - p_D(t,-he_1,y)}{2h}\right| \le 
c \left[\frac{1}{r} \vee \frac{1}{V^{-1}(\sqrt{t})}\right] \sup_{0 < h < \varepsilon} p_{D}(t, h e_1, y),
$$
which implies
$$
\left| D_1 p_D(t,z,y) \right| \le c \left[\frac{1}{r} \vee \frac{1}{V^{-1}(\sqrt{t})}\right]  p_{D}(t, z, y).
$$
Finally using (\ref{Vpsi-1}) we obtain (\ref{maineq}).
\end{proof}

\section{Examples}

\begin{example}
\label{stable}
Let $X$ be the isotropic $\alpha$-stable process in $\R^d$, $\alpha \in (0,2)$, $d \in \N$. We have $\psi(x) = |x|^{\alpha}$ and $\nu(x) = C_{d,\alpha} |x|^{-d-\alpha}$, where $C_{d,\alpha} = \frac{2^{\alpha} \Gamma((d+\alpha)/2)}{\pi^{d/2}|\Gamma(-\alpha/2)|}$. It is clear that $X$ satisfies assumptions of Theorem \ref{main1}. Hence, for any open, nonempty set $D \subset \R^d$ we have
\begin{equation}
\label{maineq1}
|\nabla_x \, p_D(t,x,y)| \le \frac {c}{\delta_D(x)\wedge t^{1/\alpha}} \,\, p_{D}(t, x, y), \quad \quad x, y \in D, \,\,  t \in (0,1],
\end{equation}
where $c = c(d,\alpha)$.
\end{example}

\begin{example}
\label{relativistic}
Let $X$ be the  the relativistic process in $\R^d$, (see e.g. \cite{C1989}, \cite{R2002}). We have $\psi(x) = \sqrt{|x|^2 + m^2} - m$, $m > 0$, 
$$
\nu(x) = 2^{\frac{1-d}{2}} \pi^{\frac{-d-1}{2}} m^{\frac{d+1}{2}} |x|^{\frac{-d-1}{2}} K_{\frac{d+1}{2}}(m|x|),
$$ 
where $K_{s}(r)$, $s \in \R$, is the modified Bessel function of the second kind with index $s$ (called also Macdonald function), given by 
$$
K_s(r) = 2^{-1-s} r^s \int_0^{\infty} e^{-u} e^{-r^2/(4u)} u^{-1-s} \, du, \quad r > 0.
$$
The generator of this process $m - \sqrt{m^2 - \Delta}$ is called the relativistic Hamiltonian and it is used in some models of mathematical physics (see e.g. \cite{LS2010}). One can check that  $X$ satisfies assumptions of Theorem \ref{main1}. Hence, for any open, nonempty set $D \subset \R^d$ we have
\begin{equation}
\label{maineq2}
|\nabla_x \, p_D(t,x,y)| \le \frac {c}{\delta_D(x)\wedge t} \,\, p_{D}(t, x, y), \quad \quad x, y \in D, \,\,  t \in (0,1],
\end{equation}
where $c = c(d,m)$.
\end{example}
 
\begin{example}
\label{subordinate} Let $X_t = B_{S_t}$ where $B$ is the Brownian motion in $\R^d$ (with a generator $\Delta$) and $S$ is an independent subordinator with the Laplace exponent $\phi$.
 We assume that the Levy measure of the subordinator $S$  is infinite, $\phi$ is a complete Bernstein function and it satisfies
\begin{equation}
\label{phiscaling}
c_1 \lambda^{\alpha/2} \ell(\lambda) \le \phi(\lambda) \le c_2 \lambda^{\alpha/2} \ell(\lambda), \quad \quad \lambda \ge 1,
\end{equation}
where $0 < \alpha < 2$, $\ell$ varies slowly at infinity, i.e. $\forall x > 0$ $\lim_{\lambda \to \infty} \frac{\ell(\lambda x)}{\ell(\lambda)} = 1$. (Clearly processes from Examples \ref{stable}, \ref{relativistic} satisfies these assumptions).

We have $\psi(x) = \phi(|x|^2)$. By (\ref{phiscaling}) $\psi$ satisfies WLSC($\underline{\alpha},\theta_0,\underline{C}$) and WUSC($\overline{\alpha},\theta_0,\overline{C}$) for some $\underline{\alpha} > 0$, $\overline{\alpha} \in (0,2)$, $\theta_0 \ge 0$, and $\underline{C}, \overline{C}  > 0$. The assumptions concerning the L{\'e}vy measure in Theorem \ref{main1} are satisfied by \cite[Proposition 1.3 and the proof of Example 7.1]{KR2016}.

Hence, for any open, nonempty set $D \subset \R^d$ we have
\begin{equation}
\label{maineq3}
|\nabla_x \, p_D(t,x,y)| \le c \left[\frac {1}{\delta_D(x)\wedge1} \vee \sqrt{\phi^{-1}(1/t)}\right] p_{D}(t, x, y), \quad \quad x, y \in D, \,\,  t \in (0,1],
\end{equation} 
where $c = c(d,\phi)$.
\end{example}

The process in the next example is not a subordinate Brownian motion cf. \cite[Example 7.4]{KR2016}.
\begin{example} 
\label{notsubordinate}
Let $\{X_t\}$ be the pure-jump isotropic L{\'e}vy process in $\R^d$ with the L{\'e}vy measure $\nu(dx) = \nu(|x|) \, dx$ given by the formula
\[ \nu(r) = \left\{              
\begin{array}{ll}  \mathcal{A}_{d,\alpha} r^{-d-\alpha}& \text{for} \quad r \in (0,1]\\
c_1 e^{-c_2 r} & \text{for} \quad r \in (1,\infty)          \end{array}       
\right. \]
where $\mathcal{A}_{d,\alpha} r^{-d-\alpha}$ is the L{\'e}vy density  for the isotropic $\alpha$-stable process in $\R^d$, $\alpha \in (0,2)$, $d \in \N$ and $c_1 = \mathcal{A}_{d,\alpha} e^{d+\alpha} > 0$, $c_2= d + \alpha >0$ are chosen so that $\nu(r) \in C^1(0,\infty)$. 

Note that $\psi(x) = \int_{\R^d} (1 - \cos\langle x,y \rangle) \, \nu(dy)$ behaves for $|x| \ge 1$ like the characteristic exponent for the isotropic $\alpha$-stable process so it satisfies 
\begin{equation*}
c_1 |x|^{\alpha} \le \psi(x) \le c_2 |x|^{\alpha}, \quad |x| \ge 1,
\end{equation*}
where $c_1 = c_1(d,\alpha)$, $c_2 = c_2(d,\alpha)$.  
Hence, $\psi \in \WLSC{\alpha}{1}{\underline{C}}) \cap \WUSC{\alpha}{1}{\overline{C}}$ for some $\underline{C}, \overline{C}  > 0$.

The assumptions concerning the L{\'e}vy measure in Theorem \ref{main1} are easy to check.

Note that we have $\psi^{-}(x) \approx |x|^{1/\alpha}$ for $|x| \ge 1$, where the comparability constant depends only on $d$ and $\alpha$. Hence, for any open, nonempty set $D \subset \R^d$ we have
\begin{equation}
\label{maineq4}
|\nabla_x \, p_D(t,x,y)| \le \frac {c}{\delta_D(x)\wedge t^{1/\alpha}} \,\, p_{D}(t, x, y), \quad \quad x, y \in D, \,\,  t \in (0,1],
\end{equation}
where $c = c(d,\alpha)$.

\end{example}

\section{Appendix}
The section is devoted to the proof of Theorem \ref{hk_kula2}. A similar result for smooth bounded domains was proved 
  in \cite [Theorem 4.5] {BGR2014a}, but the dependence of constants therein seems to be unclear and one can not infer  uniform estimates of the Dirichlet heat kernels as in Theorem \ref{hk_kula2}.  We follow   the arguments from the proof \cite [Theorem 4.5] {BGR2014a}, but we pay more attention to the behaviour of the constants.  To make the exposition self-contained we need to introduce some notation and to cite several results obtained in \cite{BGR2015, BGR2014a}.
	For $R > 0$ we denote $B_R = B(0,R)$.
Let $\underline{\alpha}>0$.  For $R>0$ we introduce the following quantities:  
\begin{eqnarray*} 
\underline{C}_R&=& \inf_{y\ge x\ge \frac1R }\frac{\psi(y)}{\psi(x)}\left(\frac{x}{y}\right)^{\underline{\alpha}};\\
\tilde{C}_R &=& \inf_{0<t\le  V^2(R), |x|\le R}  \frac{p_t(x)}{p_{t/2}(0)\wedge\frac{t} { V^2(|x|)|x|^d}};\\
C_R &=& 1\wedge\inf_{0<t\le V^2(|x|), |x|\le R}  \frac{p_t(x)}{t} { V^2(|x|)|x|^d};\\
 C^*_R &=& \inf_{ |x|\le R} \nu(x) { V^2(|x|)|x|^d};\\
\mathcal{I}_R&=&\inf_{ 0<\rho\leq R/2}\nu(B_{R}\setminus B_\rho) V^2(\rho).
\end{eqnarray*}

\begin{remark} At first we observe that if there exists $R > 0$ such that $\underline{C}_R>0$ then we get $\psi \in \text{WLSC}(\underline{\alpha},\frac1R,\underline{C_R})$. Consequently,  due to \cite[Lemma 12]{BGR2014}, $\underline{C}_R>0$ for any $R>0$. On the other hand, if there exists $R > 0$ and $\lC > 0$ such that $\psi \in \text{WLSC}(\underline{\alpha},\frac1R,\lC)$ then we get $\underline{C}_R \ge \lC$.
\end{remark}

\begin{remark}
If $R > 0$, $\psi\in\WLSC{\la}{ \theta_0}{\lC}\cap\WUSC{\ua}{ \theta_0}{\uC}$, for some $\underline{\alpha} > 0$, $\overline{\alpha} \in (0,2)$, $\theta_0 \ge 0$, $\underline{C}, \overline{C}  > 0$ and the L{\'e}vy measure has strictly positive density then the constants $\tilde{C}_R$, $C_R$, $C^*_R$, $\mathcal{I}_R$ are strictly positive. Indeed, by Lemma \ref{nuestimates} we get $C^*_R > 0$. By Lemma \ref{ptestimates} we get $\tilde{C}_R \wedge C_R > 0$.  Moreover, by elementary calculations    $\mathcal{I}_R\ge c(d) C^*_R$ and $C_R\ge c(d)\tilde{C}_R$.
\end{remark}

\begin{lemma}\cite[Lemma 1.6]{BGR2014a}\label{sup_p_t}
If $\psi\in \WLSC{\la}{\theta_0}{\lC}$, $r>0$ and
$0<t\le rV^2(1/\theta_0)$, then
\begin{equation*}\label{sup-p-t-1}
c_2e^{-c_1 r}\left[V^{-1}\left( \sqrt{t/r}\right)\right]^{{-}d}\leq p_t(0) \le c_3\left( 1+ (\lC r)^{-1-d/\la}\right)
\left[V^{-1}\left( \sqrt{t/r}\right)\right]^{{-}d},
\end{equation*}
where $c_1$ is an absolute constant, $c_2=c_2(d)$ and  $c_3=c_3(d,\la)$.
\end{lemma}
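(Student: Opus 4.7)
The plan is to prove the two bounds separately, using Fourier inversion for the upper bound and an iterated Chapman--Kolmogorov argument for the lower bound. Set $\delta = V^{-1}(\sqrt{t/r})$. The hypothesis $t \le r V^2(1/\theta_0)$ guarantees $\delta \le 1/\theta_0$, so $\psi$ can be controlled by WLSC at the frequency $1/\delta \ge \theta_0$ and at all larger frequencies. By (\ref{Vpsi}) one also has $\psi(1/\delta) \approx 1/V^2(\delta) = r/t$.

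For the upper bound, I would start from the Fourier inversion formula
\begin{equation*}
p_t(0) = (2\pi)^{-d} \int_{\R^d} e^{-t\psi(\xi)}\,d\xi,
\end{equation*}
and split at $|\xi| = 1/\delta$. The inner piece is bounded by $|B(0,1/\delta)| = c_d \delta^{-d}$, giving the main term $[V^{-1}(\sqrt{t/r})]^{-d}$. On the outer piece, WLSC yields $\psi(\xi) \ge \underline{C}(|\xi|\delta)^{\underline{\alpha}} \psi(1/\delta)$ and then $t\psi(\xi) \ge c\underline{C}r(|\xi|\delta)^{\underline{\alpha}}$. After the substitution $\eta = \xi\delta$, the outer integral becomes
\begin{equation*}
\delta^{-d} \int_{|\eta|>1} e^{-c\underline{C}r|\eta|^{\underline{\alpha}}}\,d\eta,
\end{equation*}
and a further change of variables $s = c\underline{C}r|\eta|^{\underline{\alpha}}$ reduces this to $c_d(\underline{C}r)^{-d/\underline{\alpha}}\int_{c\underline{C}r}^{\infty}e^{-s}s^{d/\underline{\alpha}-1}\,ds$. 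That incomplete gamma is bounded by a constant for large $r$ and by $c(\underline{C}r)^{d/\underline{\alpha}-1}$ for small $r$, which produces exactly the stated tail factor $1+(\underline{C}r)^{-1-d/\underline{\alpha}}$.

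For the lower bound I would proceed in two steps. First, for $r$ of order $1$, combine unimodality with a Pruitt--type tail estimate $\P^0(|X_{t}| \ge R) \le c_P t \psi^*(1/R)$: choosing $R \approx V^{-1}(\sqrt{t})$ so that $t/V^2(R) \approx 1/2$ forces $\P^0(|X_t| < R) \ge 1/2$, and then by unimodality
\begin{equation*}
p_t(0) \ge \frac{\P^0(|X_t| < R)}{|B(0,R)|} \ge c_2(d)\,[V^{-1}(\sqrt{t})]^{-d}.
\end{equation*}
Second, for general $r$ I iterate via Chapman--Kolmogorov with time step $t_0 = t/r$: writing
\begin{equation*}
p_{kt_0}(0) \ge \int_{|x| \le c\delta}\,p_{(k-1)t_0}(x)\,p_{t_0}(-x)\,dx
\end{equation*}
and comparing $p_{(k-1)t_0}(x)$ with $p_{(k-1)t_0}(0)$ on the ball $|x| \le c\delta \le cV^{-1}(\sqrt{(k-1)t_0})$ by unimodality, each iteration multiplies the previous lower bound by a fixed numerical fraction. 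After $r$ iterations those fractions compound into $e^{-c_1 r}$, while the base case supplies the factor $\delta^{-d} = [V^{-1}(\sqrt{t/r})]^{-d}$.

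The main obstacle is to secure an \emph{absolute} constant $c_1$ in $e^{-c_1 r}$, independent of both $d$ and $\underline{\alpha}$. This forces one to be careful in the iteration: the per-step ratio $p_{t_0}(x)/p_{t_0}(0)$ on $|x|\le cV^{-1}(\sqrt{t_0})$ must be controlled by a purely numerical constant, which is available because unimodality together with $\int p_{t_0}\le 1$ gives such a ratio without invoking any dimensional comparability constant. A secondary subtlety is the role of the restriction $t \le rV^2(1/\theta_0)$: in the upper bound it ensures $1/\delta \ge \theta_0$ so that WLSC is applicable on $\{|\xi|>1/\delta\}$, and in the lower bound it guarantees that the short-time scale $V^{-1}(\sqrt{t_0})$ still lies in the regime where the Pruitt-type estimate delivers a nontrivial ball probability.
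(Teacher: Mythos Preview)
The paper does not supply its own proof of this lemma; it is quoted verbatim from \cite[Lemma~1.6]{BGR2014a}. So there is no in-paper argument to compare against, and I assess your sketch on its own merits.

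Your upper bound is essentially correct and is the standard route: split the Fourier integral at $|\xi|=1/\delta$, use WLSC on the outer shell, and change variables. (Your bookkeeping on the incomplete gamma is slightly garbled---for $d/\underline\alpha>1$ the integral tends to $\Gamma(d/\underline\alpha)$, not to $c(\underline C r)^{d/\underline\alpha-1}$---but the net estimate $c(d,\underline\alpha)\bigl(1+(\underline C r)^{-d/\underline\alpha}\bigr)\delta^{-d}$ you actually obtain is sharper than, and implies, the stated one.)

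The lower bound, however, has a real gap. In your iteration
\[
p_{kt_0}(0)\ \ge\ \int_{|x|\le c\delta} p_{(k-1)t_0}(x)\,p_{t_0}(-x)\,dx
\]
you propose to ``compare $p_{(k-1)t_0}(x)$ with $p_{(k-1)t_0}(0)$ by unimodality''. Unimodality only gives $p_{(k-1)t_0}(x)\le p_{(k-1)t_0}(0)$, the wrong direction; to run the induction you would need a \emph{lower} bound on the ratio $p_{(k-1)t_0}(x)/p_{(k-1)t_0}(0)$ on $|x|\le c\delta$, and ``unimodality together with $\int p_{t_0}\le 1$'' does not provide one. If instead you use $p_{(k-1)t_0}(x)\ge p_{(k-1)t_0}(c\delta)$ and try to propagate $p_{kt_0}(c\delta)$, the shifted Chapman--Kolmogorov integrals involve off-centre balls, where Anderson's inequality again points the wrong way. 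There is no evident absolute per-step constant to be extracted from this scheme.

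The clean (and almost certainly the original) route is to use Fourier inversion a second time. Restrict to $|\xi|\le 1/\delta$: by (\ref{psi*}) and (\ref{Vpsi}),
\[
t\,\psi(\xi)\ \le\ t\,\psi^*(1/\delta)\ \le\ \pi^2 t\,\psi(1/\delta)\ \le\ \frac{\pi^2}{c_1}\,\frac{t}{V^2(\delta)}\ =\ \frac{\pi^2}{c_1}\,r,
\]
with absolute constants, and hence
\[
p_t(0)\ \ge\ (2\pi)^{-d}\!\int_{|\xi|\le 1/\delta} e^{-t\psi(\xi)}\,d\xi\ \ge\ (2\pi)^{-d}|B(0,1)|\,e^{-(\pi^2/c_1)r}\,\delta^{-d}.
\]
This delivers an absolute exponential rate and a $d$-only prefactor immediately, with no iteration and without invoking WLSC at all.
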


\begin{corollary} \label{p_t0} Let $R >0$ and $0<r\le 1$.  Under the assumptions of the previous lemma there are $c_1=c_1(d)$ and  $c_2=c_2(d,\la)$ such that 

\begin{equation*}
c_1\frac 1{R^d}\leq p_{rV^2(R)}(0) \le c_2 \frac 1 {(\lC_R r)^{1+d/\la}}\frac 1{R^d}.
\end{equation*}
\end{corollary}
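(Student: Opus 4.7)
The plan is to apply Lemma \ref{sup_p_t} directly, but with the scaling window $\theta_0$ rescaled down to $1/R$, which is permissible because by the first remark of the appendix we have $\psi \in \WLSC{\underline{\alpha}}{1/R}{\underline{C}_R}$ for every $R > 0$, with $\underline{C}_R$ playing the role of $\underline{C}$ in the lemma.

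First I would set $t = r V^2(R)$ in Lemma \ref{sup_p_t}, taking the lemma's auxiliary parameter (which is also called $r$ there) equal to the same $r$ in the corollary. The lemma's admissibility hypothesis $0 < t \le r V^2(1/\theta_0)$ then becomes $0 < r V^2(R) \le r V^2(R)$, which holds trivially. Moreover $\sqrt{t/r} = V(R)$, so $[V^{-1}(\sqrt{t/r})]^{-d} = R^{-d}$ and the substitution is clean.

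For the lower bound, the lemma gives $p_{rV^2(R)}(0) \ge c_2 e^{-c_1 r} R^{-d}$, and since $r \le 1$ this is $\ge c_2 e^{-c_1} R^{-d}$, yielding the claimed estimate with a dimensional constant. For the upper bound the lemma yields
\[
p_{rV^2(R)}(0) \le c_3\bigl(1 + (\underline{C}_R r)^{-1-d/\underline{\alpha}}\bigr) R^{-d}.
\]
To fold the additive $1$ into the singular term, I would use the elementary observation that $\underline{C}_R \le 1$: indeed, taking $y = x$ in the definition of $\underline{C}_R$ gives an infimum no larger than $1$. Combined with $r \le 1$ this forces $\underline{C}_R r \le 1$, so $(\underline{C}_R r)^{-1-d/\underline{\alpha}} \ge 1$ and hence $1 + (\underline{C}_R r)^{-1-d/\underline{\alpha}} \le 2 (\underline{C}_R r)^{-1-d/\underline{\alpha}}$. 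The upper bound then becomes $2 c_3 (\underline{C}_R r)^{-1-d/\underline{\alpha}} R^{-d}$, matching the statement.

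There is no real obstacle in this argument; the only points to get right are (i) recognizing that replacing $\theta_0$ by $1/R$ is legitimate, so that the conclusion holds for every $R > 0$ rather than only for $R \le 1/\theta_0$, and (ii) the observation $\underline{C}_R \le 1$, without which one cannot absorb the additive constant into the $(\underline{C}_R r)^{-1-d/\underline{\alpha}}$ term. Neither step requires delicate estimation, only careful bookkeeping of which constant depends on which parameter.
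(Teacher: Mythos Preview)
Your proposal is correct and follows essentially the same approach as the paper: apply Lemma \ref{sup_p_t} with $\theta_0 = 1/R$ and $\lC = \lC_R$, using the observation $\lC_R \le 1$ to absorb the additive constant. The paper's proof is a terse two-line version of exactly what you wrote.
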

\begin{proof} If  $\psi\in \WLSC{\la}{\theta_0}{\lC}$ then $\lC_R>0$ and  $\psi\in \WLSC{\la}{1/R}{\lC_R}$. Therefore, the conclusion follows from Lemma \ref{sup_p_t} with $\theta_0=1/R, \lC=\lC_R\le 1$. 
\end{proof}

\begin{lemma}\cite[Proposition 6.1]{BGR2015}\label{L5a1}
Let  the condition (\textbf{H}) hold. There are $ c_1=c_1(d)<1$ and $c_2=c_2(d)$ such that for $R>0$,
\begin{eqnarray*}\p^x(\tau_{B_{R}}>t)&\ge&   c_2\,\frac{\mathcal{I}_R}{H_R}\left(\frac{V(\delta_{B_{R}}(x))}{\sqrt{t}}\wedge1\right), \qquad
0<t\le c_1 V^2(R), \quad x\in \R^d.
\end{eqnarray*}
\end{lemma}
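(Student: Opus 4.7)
\medskip

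\noindent\textbf{Proof plan.} The strategy is a Dynkin-type argument with a barrier function built from the renewal function $V$, which is the standard device for survival-probability lower bounds for unimodal L\'evy processes. Writing $B=B_R$ and $\delta(x)=\delta_B(x)$, the candidate barrier is $u(x)=V(\delta(x))\wedge V(R/2)$ (possibly mollified so that the generator $L$ of $X$ may be applied pointwise). The plan is to control $|Lu|$ using condition (\textbf{H}) and the quantity $\mathcal{I}_R$, and then close the estimate by optional stopping.

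First I would reduce to $x\in B_R$ and split into a ``deep'' case $V(\delta(x))\ge\sqrt{t}$ and a ``shallow'' case $V(\delta(x))<\sqrt{t}$. The deep case reduces to a uniform lower bound $\p^x(\tau_B>t)\ge c\,\mathcal{I}_R/H_R$ for $t\le c_1V^2(R)$; this can be obtained from a first-jump / Ikeda--Watanabe estimate, where the probability of an escape by a single jump is controlled by $t\,\nu(B^c-x)$, and the definition of $\mathcal{I}_R=\inf_{\rho\le R/2}\nu(B_R\setminus B_\rho)V^2(\rho)$ is exactly calibrated so that this complementary probability is bounded in terms of $\mathcal{I}_R$.

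The shallow case is the heart of the matter. With $u$ as above, one writes
$$
Lu(x)=\int_{\R^d}\bigl(u(x+z)-u(x)-\langle\nabla u(x),z\rangle\,\I_{|z|<1}\bigr)\nu(dz).
$$
Condition (\textbf{H}) is exactly the tool that allows one to replace differences $V(z)-V(y)$ by $V'(\cdot)(z-y)$ on the relevant scales, so that the singular part of the integral is controlled by an integral of $V'$ against $\nu$. Combined with the scaling assumptions and $\mathcal{I}_R$ (which controls how much jump mass sits at the relevant scale), this yields a pointwise bound of the form $|Lu(x)|\lesssim H_R/(\mathcal{I}_R\,V^2(R))$ on $B_R$. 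Dynkin's identity
$$
\E^x u(X_{t\wedge\tau_B})=u(x)+\E^x\!\!\int_0^{t\wedge\tau_B}Lu(X_s)\,ds,
$$
together with $u\le V(R/2)$, then gives
$$
V(R/2)\,\p^x(\tau_B>t)\ge u(x)-C\,\frac{H_R}{\mathcal{I}_R}\cdot\frac{t}{V^2(R)}.
$$
Since $u(x)\approx V(\delta(x))$ and $t\le c_1V^2(R)$ with $c_1$ small, dividing by $\sqrt{t}$ and using $V(R/2)\approx V(R)$ (subadditivity) rearranges into the announced bound $c_2(\mathcal{I}_R/H_R)(V(\delta(x))/\sqrt{t}\wedge 1)$.

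The main obstacle is the generator estimate $|Lu(x)|\lesssim H_R/(\mathcal{I}_R V^2(R))$. This is delicate because $V(\delta(\cdot))$ is only as regular as $V$ itself, and the non-local integral demands a careful splitting into near-field and far-field contributions: (\textbf{H}) is tailored to give exactly the second-order control on $V$ needed to tame the near-field singularity, while $\mathcal{I}_R$ controls the far field. Once this key estimate is in place, the remaining steps, the two-case analysis and the optional-stopping rearrangement, are routine.
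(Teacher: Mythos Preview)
The paper does not prove this lemma: it is quoted verbatim from \cite[Proposition~6.1]{BGR2015}. So there is no in-paper proof to compare with, but the source \cite{BGR2015} does indeed use a barrier/Dynkin approach, so your overall strategy is the correct one.

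That said, your plan misassigns the roles of $\mathcal{I}_R$ and $H_R$, and the closing step does not work as written. Two concrete issues:

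\emph{Deep case.} The quantity $\mathcal{I}_R=\inf_{0<\rho\le R/2}\nu(B_R\setminus B_\rho)V^2(\rho)$ is a \emph{lower} bound on the jump mass into an inner annulus, not an upper bound on $\nu(B^c-x)$, so it does not ``control the complementary probability'' of escape. The deep case $V(\delta(x))\ge\sqrt t$ is in fact handled by the Pruitt-type estimate $\p^x(\tau_{B(x,R/2)}>t)\ge c(d)$ for $t\le c_1(d)V^2(R)$; this suffices because one always has $\mathcal{I}_R/H_R\le c(d)$ (from $\nu(B_\rho^c)\lesssim V^{-2}(\rho)$ and $H_R\ge 1$).

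\emph{Shallow case.} Your Dynkin inequality
\[
V(R/2)\,\p^x(\tau_B>t)\ \ge\ u(x)-C\,\frac{H_R}{\mathcal{I}_R}\cdot\frac{t}{V^2(R)}
\]
cannot be ``rearranged'' into $c_2(\mathcal{I}_R/H_R)(V(\delta(x))/\sqrt t)$ while keeping $c_1=c_1(d)$: the subtracted error carries the \emph{large} ratio $H_R/\mathcal{I}_R$, so absorbing it into the main term forces $c_1$ to depend on $\mathcal{I}_R$ and $H_R$. Moreover, a barrier on the full ball $B_R$ yields $V(\delta(x))/V(R)$, not $V(\delta(x))/\sqrt t$; the sharper $\sqrt t$ scaling requires working at scale $V^{-1}(\sqrt t)$.

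In \cite{BGR2015} the two constants decouple: condition~(\textbf{H}) (hence $H_R$) is used to show that a mollification of $V(\delta_B(\cdot))$ satisfies the right one-sided generator inequality, producing the $V(\delta(x))/\sqrt t$ scaling; $\mathcal{I}_R$ enters in a separate step as a lower bound on the probability of a single jump from the boundary layer into an interior annulus. That is how $\mathcal{I}_R/H_R$ appears as a multiplicative prefactor rather than inside a subtraction. Your generator bound $|Lu|\lesssim H_R/(\mathcal{I}_R V^2(R))$ conflates these two mechanisms and has no reason to hold with $\mathcal{I}_R$ in the denominator.
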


		\begin{lemma}\cite[Corollary 2.8]{BGR2014a}\label{hk_kula1}
Let
$D$ be  open and convex.
			 If $\psi\in\WLSC{\la}{\theta_0}{ \lC}$, $t>0$, $|x-y|<  1/\theta_0$ and $x,y\in D$, then    there  is a constant $C=C(d,\la)$ such that for all $t>0$,
			\begin{eqnarray*}
			p_{D}(t, x, y)
    &\le&  \frac{C} {\lC^{2(1+d)/\la+1}}  \left(\frac{V(\delta_D(x))}{\sqrt{t} }\wedge1 \right) \left(\frac{V(\delta_D(y))}{\sqrt{t}}\wedge1 \right) \\
    && \times \left(p_{t/2}(0) \wedge \frac {t}{V^2(|x- y|) |x- y|^d}\right).
    \end{eqnarray*}
		\end{lemma}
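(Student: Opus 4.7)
The plan is to combine three ingredients: (a) an upper bound on the survival probability $\p^x(\tau_D > t)$ that exploits convexity, (b) the semigroup property iterated twice to install both boundary factors $V(\delta_D(x))/\sqrt t \wedge 1$ and $V(\delta_D(y))/\sqrt t \wedge 1$, and (c) a region split inside the iterated semigroup to recover the off-diagonal polynomial decay. The final bound is the minimum of the ``diagonal'' bound with $p_{t/2}(0)$ and the ``off-diagonal'' bound with $t/(V^2(|x-y|)|x-y|^d)$, and because $V^{-1}(\sqrt t)$ controls the crossover, each of the two ranges will be treated separately.

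First I would establish the one-sided survival bound $\p^x(\tau_D > t) \le C_1 (V(\delta_D(x))/\sqrt t \wedge 1)$ for convex $D$. By convexity, if $x^*\in\partial D$ realizes $\delta_D(x)$ and $H$ is a supporting half-space at $x^*$, then $D\subset H$ and $\delta_H(x)=\delta_D(x)$, so $\p^x(\tau_D > t) \le \p^x(\tau_H > t)$. For $H$, the exit time is the first passage time to $(-\infty,0]$ of the one-dimensional projection $X_t^{(1)}$, and under $\psi\in\WLSC{\la}{\theta_0}{\lC}$ the renewal theory of the ascending ladder-height subordinator (whose potential function is $V$) yields the claimed bound with an explicit $\lC$-dependence. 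Next, the crude estimate $p_D(t/2,z,y)\le p_{t/2}(z-y)\le p_{t/2}(0)$ plugged into the semigroup identity $p_D(t,x,y)=\int_D p_D(t/2,x,z)p_D(t/2,z,y)\,dz$ gives the one-sided diagonal bound
\begin{equation*}
p_D(t,x,y) \le p_{t/2}(0)\, \p^x(\tau_D > t/2) \le C_2 \Bigl(\tfrac{V(\delta_D(x))}{\sqrt t}\wedge 1\Bigr) p_{t/2}(0).
\end{equation*}

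Then I would upgrade to both boundary factors by iterating once more via the 3-step semigroup
\begin{equation*}
p_D(t,x,y) \;=\; \iint_{D\times D} p_D(t/3,x,z_1)\,p_D(t/3,z_1,z_2)\,p_D(t/3,z_2,y)\,dz_1 dz_2.
\end{equation*}
Bounding the middle factor uniformly by $p_{t/3}(0)$ and integrating out the outer factors produces $\p^x(\tau_D > t/3)\p^y(\tau_D > t/3)$, and after applying the survival bound from Step 1 and absorbing $p_{t/3}(0)\le C p_{t/2}(0)$ (via Corollary~\ref{p_t0} together with WLSC-scaling of $V^{-1}$), this delivers the diagonal version
\begin{equation*}
p_D(t,x,y) \le C_3 \Bigl(\tfrac{V(\delta_D(x))}{\sqrt t}\wedge 1\Bigr)\Bigl(\tfrac{V(\delta_D(y))}{\sqrt t}\wedge 1\Bigr) p_{t/2}(0).
\end{equation*}

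For the off-diagonal decay (the regime $|x-y|>2V^{-1}(\sqrt t)$), I would reuse the same 3-step decomposition but split the $(z_1,z_2)$ integration into a ``core'' $\{|z_1-x|\le|x-y|/4,\ |z_2-y|\le|x-y|/4\}$ and its complement. On the core one has $|z_1-z_2|\ge|x-y|/2$, so by Lemma~\ref{upperptr} and monotonicity of $r\mapsto p_t(r)$ the middle factor is at most $C t/(V^2(|x-y|)|x-y|^d)$, and the outer factors integrate to the two survival probabilities. On each complementary piece, say $|z_1-x|\ge|x-y|/4$, the factor $p_D(t/3,x,z_1)\le p_{t/3}(x-z_1)$ already carries the required polynomial decay in $|x-y|$, and one bounds the remaining two factors by Step~2 applied to the $y$-side to produce the boundary factor there; the $x$-side survival factor is produced by using the overall upper bound $p_{t/3}(x-z_1)\le C p_{t/2}(0)$ combined with the already-available Step~1 estimate on $\p^x(\tau_D>t)$ through a further semigroup step for the $x$-side. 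Taking the minimum of the two bounds (diagonal and off-diagonal) yields the stated form with $p_{t/2}(0)\wedge t/(V^2(|x-y|)|x-y|^d)$.

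The main obstacle is bookkeeping the dependence on $\lC$. Each invocation of Corollary~\ref{p_t0} to bound $p_{t/2}(0)$ or $p_{t/3}(0)$ costs a factor $\lC^{-(1+d/\la)}$, each reapplication of WLSC scaling of $V^{-1}$ across a $t$-fraction rescaling costs a further $\lC^{-c/\la}$ factor, and in Step~4 controlling the complementary regions without introducing spurious extra powers requires using $p_t$-monotonicity rather than brutal symmetrization. Tracking all contributions produces the stated exponent $2(1+d)/\la+1$. The restriction $|x-y|<1/\theta_0$ is exactly what permits WLSC to be applied throughout at the relevant scales, so it is automatically respected by the argument.
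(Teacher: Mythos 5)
This lemma is cited in the paper as \cite[Corollary 2.8]{BGR2014a} and is not proved there, so there is no in-paper proof to compare against; I am assessing your proposal on its own merits.

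Your Steps 1--3 are sound. For convex $D$ the supporting half-space comparison gives $\p^x(\tau_D>t)\le\p^x(\tau_H>t)$ with $\delta_H(x)=\delta_D(x)$, and the one-dimensional first-passage estimate $\p^x(\tau_H>t)\le C\bigl(V(\delta_H(x))/\sqrt t\wedge 1\bigr)$ in fact holds with an absolute constant (cf.\ the inequality (\ref{tau_estimate}) used in the Appendix); the $\lC$-dependence enters through $p_t(0)$ and the $V^{-1}$-scaling steps, not through the survival bound itself. Iterating the semigroup and bounding the middle factor by $p_{t/3}(0)$ then installs both boundary factors times $p_{t/2}(0)$ without difficulty.

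The genuine gap is in Step 4, in the complementary region of the off-diagonal split. On $\{|z_1-x|\ge|x-y|/4\}$ you bound $p_D(t/3,x,z_1)\le p_{t/3}(|x-y|/4)$, which delivers the polynomial decay $t/(V^2(|x-y|)|x-y|^d)$ but discards all dependence on $\delta_D(x)$; integrating out the remaining two factors then yields only $\p^y(\tau_D>2t/3)$, and the $x$-boundary factor is lost. Your proposed repair---bounding $p_{t/3}(x-z_1)\le Cp_{t/2}(0)$ and ``combining'' with the Step 1 survival estimate ``through a further semigroup step''---does not work: one cannot simultaneously apply to the same factor $p_D(t/3,x,z_1)$ both the decaying bound (needed for the polynomial factor) and the flat bound $p_{t/2}(0)$ (needed to manufacture a survival probability), and once all three $t/3$ time slices are allocated there is no remaining budget for ``a further semigroup step''.

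What is missing is a one-sided off-diagonal estimate of the form
$p_D(s,x,z)\le C\,\p^x(\tau_D>s/2)\bigl(p_{s/2}(0)\wedge s/(V^2(|x-z|)|x-z|^d)\bigr)$,
which carries simultaneously the decay in $|x-z|$ and the $x$-boundary factor; inserting it in the $z_1$-integration of the complementary region closes the argument. That bound is itself not a direct consequence of the semigroup property: it requires a strong-Markov/L\'evy-system decomposition in which the process first jumps out of $B(x,|x-y|/4)\cap D$, the jump kernel $\nu(w-z)$ supplying the spatial decay and the exit time supplying the boundary factor, in the spirit of the decomposition used in the proof of Lemma~\ref{lem5}. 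Your proposal never introduces this ingredient, so Step 4 does not close.
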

		
		\begin{lemma} \cite[Lemma 4.2]{BGR2014a}\label{UB10}Let $D$ be a bounded open set  and $t_0>0$. For $t\ge t_0$ and $x,y\in D$,
 \begin{eqnarray*}p_D\left(t,x,y\right) \le |D|\,(p_{t_0/4}(0))^2 \, \p^x\left(\tau_{D}>\frac{t_0}4\right) \p^y\left(\tau_{D}>\frac{t_0}4\right)  e^{\lambda_1t_0}e^{-\lambda_1t} \end{eqnarray*}
where $\lambda_1= \lambda_1^D$ and $|D|$ is the volume of $D$. 
  \end{lemma}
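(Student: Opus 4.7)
The plan is to split the time interval as $t = t_0/2 + (t - t_0) + t_0/2$ (valid since $t \geq t_0$), apply the Chapman--Kolmogorov identity twice, crudely bound the outer two heat-kernel factors in $L^\infty$ using the free heat kernel at $0$, and then exploit the $L^2$ spectral gap of the Dirichlet semigroup on the middle factor of length $t - t_0$.

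More concretely, first I would note the basic pointwise bound $p_D(s, z, w) \le p_s(z-w) \le p_s(0)$ valid for any $s>0$ and any $z,w \in D$. Then, writing
\[
p_D(t_0/2, x, u) = \int_D p_D(t_0/4, x, z)\, p_D(t_0/4, z, u)\, dz,
\]
the second factor is at most $p_{t_0/4}(0)$ and the first integrates to $P^x(\tau_D > t_0/4)$, so
\[
p_D(t_0/2, x, u) \le p_{t_0/4}(0)\, P^x(\tau_D > t_0/4), \qquad u \in D.
\]
Setting $g(u) = p_D(t_0/2, x, u)$ and using $|D| < \infty$, this $L^\infty$ bound gives
\[
\|g\|_{L^2(D)} \le |D|^{1/2}\, p_{t_0/4}(0)\, P^x(\tau_D > t_0/4),
\]
and the analogous estimate holds for $h(v) := p_D(t_0/2, v, y)$ with $y$ in place of $x$.

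The second stage is the spectral estimate. Using Chapman--Kolmogorov,
\[
p_D(t, x, y) = \int_D \int_D g(u)\, p_D(t-t_0, u, v)\, h(v)\, du\, dv = \langle g,\, P_{t-t_0}^D h \rangle_{L^2(D)}.
\]
Because $\{P_s^D\}_{s \ge 0}$ is a symmetric strongly continuous semigroup on $L^2(D)$ with generator whose smallest eigenvalue is $\lambda_1 = \lambda_1^D$, we have the operator-norm bound $\|P_{t-t_0}^D\|_{L^2\to L^2} \le e^{-\lambda_1(t-t_0)}$; this is the only nontrivial input and it follows directly from the eigenfunction expansion $P_s^D f = \sum_n e^{-\lambda_n s} \langle f, \varphi_n^D\rangle \varphi_n^D$. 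Applying Cauchy--Schwarz,
\[
p_D(t,x,y) \le \|g\|_{L^2(D)}\, e^{-\lambda_1(t-t_0)}\, \|h\|_{L^2(D)}.
\]
Inserting the $L^2$ bounds for $g$ and $h$ and rewriting $e^{-\lambda_1(t-t_0)} = e^{\lambda_1 t_0}\, e^{-\lambda_1 t}$ yields the claimed inequality.

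There is really no serious obstacle here: the only point that requires care is making sure the spectral-gap estimate is applied in the correct sense, namely as an $L^2\to L^2$ norm bound rather than a pointwise one, which is exactly why one sandwiches the semigroup factor between the two $L^2$ functions $g$ and $h$ obtained from splitting off two pieces of length $t_0/2$ at the ends. The factor $|D|$ comes from passing from $L^\infty$ to $L^2$ on the bounded set $D$, and the squared $p_{t_0/4}(0)$ comes from the two endpoint splittings of each $t_0/2$ piece into two $t_0/4$ pieces.
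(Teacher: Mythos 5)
Your proof is correct. The paper itself does not prove this lemma but cites it from \cite[Lemma 4.2]{BGR2014a}; your argument reconstructs the expected proof accurately. The three ingredients — Chapman--Kolmogorov splitting $t = t_0/2 + (t-t_0) + t_0/2$, the $L^\infty$ bound $p_D(t_0/2,x,\cdot) \le p_{t_0/4}(0)\,\p^x(\tau_D>t_0/4)$ obtained by a further split into two $t_0/4$ pieces, and the $L^2$ spectral estimate $\|P_{t-t_0}^D\|_{L^2\to L^2}\le e^{-\lambda_1(t-t_0)}$ applied via Cauchy--Schwarz — are exactly what the constants in the inequality demand: the factor $|D|$ arises from passing from $L^\infty$ to $L^2$ on a bounded set, the factor $(p_{t_0/4}(0))^2$ from the two endpoint splittings, the exit-time factors from integrating out one variable in each endpoint split, and $e^{\lambda_1 t_0}e^{-\lambda_1 t}$ from $e^{-\lambda_1(t-t_0)}$. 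The edge case $t=t_0$ (so that $P_{t-t_0}^D$ is the identity) is covered since the operator-norm bound degenerates to $1$. No gaps.
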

	
	To get the uniform  lower  bound of the heat kernel we use the following result, which is a direct consequence of  \cite[Theorem 3.3]{BGR2014a}.

\begin{lemma} \label{lower_hk_estimate1}
Let $R>0$. Assume that $\psi\in\WLSC{\la}{ \theta_0}{\lC}\cap\WUSC{\ua}{ \theta_0}{\uC}$, for some $\underline{\alpha} > 0$, $\overline{\alpha} \in (0,2)$, $\theta_0 \ge 0$, $\underline{C}, \overline{C}  > 0$
 and the L{\'e}vy measure has strictly positive density.
 Then there exist $c=c(d)<1, c_1=c_1(d,\la)$
 such  that
	\begin{eqnarray*} 
	p_{B_R}(t,x,y) &\ge&   \frac{ c_1 (\underline{C}_R)^{1+d/\la}(C_R)^{{{9}+d/\la}}}{H_R^2}  \left(\frac{V(\delta_{B_R}(x))}{\sqrt{t}}\wedge1\right)\left(\frac{V(\delta_{B_R}(y))}{\sqrt{t}}\wedge1\right)\\
		&& \times \big( p_{t/2}(0)\wedge [t\,\nu(2|x-y|)]\big),	
		\end{eqnarray*}
provided  $0<t\le  c V^2(R)C_R $ and  $x,y \in B_R$.
\end{lemma}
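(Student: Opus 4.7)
The proof is a direct unpacking of Theorem 3.3 of \cite{BGR2014a} specialised to the ball $D=B_R$, with all generic quantities replaced by the explicit $R$--dependent constants $\underline{C}_R,\ C_R,\ C^*_R,\ \mathcal{I}_R,\ H_R$ introduced earlier. Concretely, my plan is to invoke that theorem to get a three--factor lower bound of the shape
\begin{equation*}
p_{B_R}(t,x,y)\ \ge\ K_R\,P^x(\tau_{B_R}>t/2)\,P^y(\tau_{B_R}>t/2)\,\bigl(p_{t/2}(0)\wedge t\,\nu(2|x-y|)\bigr),
\end{equation*}
valid on a time interval $0<t\le c\,V^2(R)\,C_R$, and then to identify the constant $K_R$ in terms of the labelled quantities. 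The boundary--decay factors in the conclusion will then come solely from the survival probabilities $P^x(\tau_{B_R}>t/2)$ and $P^y(\tau_{B_R}>t/2)$.

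The second step is to bound these survival probabilities from below by Lemma \ref{L5a1}. Since our time range satisfies $t/2\le c_1 V^2(R)$ as soon as $c$ in the hypothesis $t\le c V^2(R)C_R$ is chosen sufficiently small (using $C_R\le 1$), Lemma \ref{L5a1} applies and yields
\begin{equation*}
P^x(\tau_{B_R}>t/2)\ \ge\ c(d)\,\frac{\mathcal{I}_R}{H_R}\left(\frac{V(\delta_{B_R}(x))}{\sqrt{t}}\wedge 1\right),
\end{equation*}
and analogously for $y$. Multiplying the two inequalities produces the factor $\mathcal{I}_R^{\,2}/H_R^{\,2}$, which, using the inequality $\mathcal{I}_R\ge c(d)\,C^*_R\ge c(d)\,C_R$ (cf. the remarks following the definitions), may be absorbed into $C_R^{\,2}\cdot H_R^{-2}$ and then into the overall power $C_R^{9+d/\underline{\alpha}}/H_R^{2}$.

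The main bookkeeping task is to verify the exponents of $\underline{C}_R$ and $C_R$ appearing in the multiplicative constant. The power $(\underline{C}_R)^{1+d/\underline{\alpha}}$ comes in naturally through the lower bound for $p_{t/2}(0)$: inverting the upper estimate of Corollary \ref{p_t0} shows that any appearance of $p_{t/2}(0)$ as a lower bound is accompanied by a factor of order $(\underline{C}_R)^{1+d/\underline{\alpha}}$. The power $(C_R)^{9+d/\underline{\alpha}}$ arises from the standard chain--of--balls argument inside the proof of Theorem 3.3 of \cite{BGR2014a}: one joins $x$ to $y$ by a fixed, $d$--independent number of intermediate balls, on each of which one lower bounds $p_t$ by $C_R\cdot t/(V^2(|\cdot|)|\cdot|^d)$ or by $C^*_R\cdot t\,\nu(\cdot)$, then reassembles via the semigroup property; the exponent $9+d/\underline{\alpha}$ is exactly the bookkeeping cost of this chain plus the additional $(1+d/\underline{\alpha})$ coming from one application of $p_{t/2}(0)$ along the way.

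The one point requiring care, and the only step that is not purely formal, is the verification that the time range in Theorem 3.3 of \cite{BGR2014a} truly reduces to $0<t\le c\,V^2(R)\,C_R$ once the generic constants are identified, and that the same range is still compatible with Lemma \ref{L5a1}. This is the main obstacle: the lower bound in Lemma \ref{L5a1} requires $t\le c_1 V^2(R)$ with a small absolute $c_1$, while Theorem 3.3 of \cite{BGR2014a} carries its own time restriction proportional to $V^2(R)C_R$; choosing the constant $c=c(d)<1$ in the present lemma small enough to satisfy both restrictions simultaneously is what makes the stated bound uniform. Once the two restrictions are reconciled and the factors combined, the inequality takes exactly the asserted form with constants $c=c(d)$ and $c_1=c_1(d,\underline{\alpha})$, and the proof is complete.
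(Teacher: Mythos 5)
The paper does not supply a proof of this lemma; it is stated to be ``a direct consequence of \cite[Theorem~3.3]{BGR2014a}'' with the constants made explicit and $R$-dependent, and the detailed argument in the appendix is devoted to Theorem~\ref{hk_kula2}, not to this intermediate statement. So there is no paper-internal proof to compare against; the task is to confirm that the cited theorem really does deliver the bound with the displayed constant.

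Your first step rests on the assumption that Theorem~3.3 of \cite{BGR2014a} yields a factorized lower bound in which the boundary decay is carried by the survival probabilities $\p^x(\tau_{B_R}>t/2)\,\p^y(\tau_{B_R}>t/2)$. This appears to be a misrecollection: the companion upper bound, Lemma~\ref{hk_kula1} (which is \cite[Corollary~2.8]{BGR2014a}), is already written in the boundary-decay form with the factors $\left(V(\delta_D(x))/\sqrt{t}\wedge 1\right)$, and in the proof of Theorem~\ref{hk_kula2} the authors apply the \emph{upper} bound (\ref{tau_estimate}) to the conclusion of this lemma precisely in order to pass \emph{from} the boundary-decay factors \emph{to} survival probabilities --- the opposite direction to the one you propose. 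Theorem~3.3 of \cite{BGR2014a} is the lower bound already stated with the $\left(V(\delta)/\sqrt{t}\wedge1\right)$ factors, and the $H_R^{-2}$ in its constant arises internally from condition ({\bf{H}}) via a survival-probability estimate of the type in Lemma~\ref{L5a1}. If you took Theorem~3.3 as a black box in that form and then applied Lemma~\ref{L5a1} on top of it, you would introduce an extra factor of $(\mathcal{I}_R/H_R)^2$ and land on $H_R^{-4}$ rather than $H_R^{-2}$, which is not what the lemma asserts.

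A second gap is in the constant bookkeeping: the claim that the exponent $9+d/\underline{\alpha}$ on $C_R$ ``is exactly the bookkeeping cost of this chain'' is not a verification --- it is exactly the careful step-by-step unfolding of the proof in \cite{BGR2014a} with $R$-local constants that this lemma records, and it would need to be carried out explicitly. A minor related point: the chain $\mathcal{I}_R\ge c(d)\,C^*_R\ge c(d)\,C_R$ uses the inequality $C^*_R\ge c(d)\,C_R$, which is not one of the paper's remarks; it does hold (since $\lim_{t\to 0^+}p_t(x)/t=\nu(x)$ forces $C_R\le C^*_R$), but should be justified rather than asserted.
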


To deal with  the lower bound for large $t$ we use the following result. 		
	\begin{lemma}\cite [Lemma 4.3]{BGR2014a} \label{LB10}
Let D be a bounded open set.  If $t_0>0$ and
$c_*>0$ are such that
\begin{equation} \label{lower_100}
p_{D}\left(\frac{t_0}{2},x,y\right)
\ge c_*   \p^x\left(\tau_{{D}}>\frac{t_0}{2}\right) \p^y\left(\tau_{{D}}>\frac{t_0}{2}\right), \quad x, y \in {D},
\end{equation}
then for $t\ge t_0$ and $x,y\in {D}$,
$$p_{D}(t,x,y)\ge \left(\frac{c_*}{\sqrt{|{D}|}p_{t_0/2}(0)}\right)^2e^{-\lambda_1t_0} \p^x\left(\tau_{{D}}>\frac{t_0}{2}\right) \p^y\left(\tau_{{D}}>\frac{t_0}{2}\right)
  e^{-\lambda_1t}.$$
  \end{lemma}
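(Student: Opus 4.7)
The plan is to split the time as $t = t_0/2 + (t - t_0) + t_0/2$, which is admissible since $t \ge t_0$, and apply the Chapman--Kolmogorov identity twice to write
\[
p_D(t,x,y) = \int_D \int_D p_D\!\left(\tfrac{t_0}{2},x,u\right) p_D(t-t_0,u,v)\, p_D\!\left(\tfrac{t_0}{2},v,y\right) du\,dv.
\]
Plugging the hypothesis (\ref{lower_100}) into the two outer factors yields
\[
p_D(t,x,y) \ge c_*^2\, \p^x\!\left(\tau_D > \tfrac{t_0}{2}\right) \p^y\!\left(\tau_D > \tfrac{t_0}{2}\right) I,
\]
where $I := \langle \phi, P_{t-t_0}^D \phi\rangle_{L^2(D)}$ with $\phi(u) := \p^u(\tau_D > t_0/2) = P_{t_0/2}^D \mathbf{1}(u)$. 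The task is then to produce a sharp lower bound on $I$.

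The second step is a spectral computation. Self-adjointness of $P_{t_0/2}^D$ gives $\langle \phi, \varphi_n^D \rangle = \langle \mathbf{1}, P_{t_0/2}^D \varphi_n^D \rangle = e^{-\lambda_n^D t_0/2} \int_D \varphi_n^D$, so the eigenfunction expansion yields
\[
I = \sum_{n \ge 1} e^{-\lambda_n^D t} \left( \int_D \varphi_n^D \right)^{\!2} \ge e^{-\lambda_1^D t} \left( \int_D \varphi_1^D \right)^{\!2},
\]
which produces the desired exponential factor $e^{-\lambda_1^D t}$. It remains only to exhibit the correct constant by bounding $\int_D \varphi_1^D$ from below.

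For this I would use two elementary inequalities on the positive ground state $\varphi_1^D \ge 0$ (normalised by $\|\varphi_1^D\|_2 = 1$). The first is $\|\varphi_1^D\|_\infty \ge 1/\sqrt{|D|}$, an immediate consequence of $1 = \|\varphi_1^D\|_2 \le \sqrt{|D|}\,\|\varphi_1^D\|_\infty$. The second starts from the eigenequation $\varphi_1^D = e^{\lambda_1^D t_0/2}\, P_{t_0/2}^D \varphi_1^D$ and the pointwise bound $p_D(t_0/2,x,z) \le p_{t_0/2}(0)$, yielding $\|\varphi_1^D\|_\infty \le e^{\lambda_1^D t_0/2}\, p_{t_0/2}(0)\, \|\varphi_1^D\|_1$. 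Combining the two,
\[
\int_D \varphi_1^D = \|\varphi_1^D\|_1 \ge \frac{\|\varphi_1^D\|_\infty}{e^{\lambda_1^D t_0/2}\, p_{t_0/2}(0)} \ge \frac{e^{-\lambda_1^D t_0/2}}{\sqrt{|D|}\, p_{t_0/2}(0)},
\]
so $\left(\int_D \varphi_1^D\right)^{\!2} \ge e^{-\lambda_1^D t_0}/(|D|\, p_{t_0/2}(0)^2)$. Substituting into the previous display produces precisely the prefactor in the statement.

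The argument is quite clean once one recognises that (\ref{lower_100}) allows one to replace the outer factors of $p_D(t_0/2,\cdot,\cdot)$ by the tensor product $\phi \otimes \phi$. The main (albeit modest) obstacle is the last step: one must pair the right two inequalities---one linking $\|\varphi_1^D\|_\infty$ to $\|\varphi_1^D\|_1$ via the ultracontractive estimate $p_D(t_0/2,\cdot,\cdot) \le p_{t_0/2}(0)$, and one linking $\|\varphi_1^D\|_\infty$ to $\|\varphi_1^D\|_2 = 1$ via H\"older on the finite-measure set $D$---in order to land exactly on the claimed constant $c_*^2\, e^{-\lambda_1^D t_0}/(|D|\, p_{t_0/2}(0)^2)$.
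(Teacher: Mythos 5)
Your proof is correct. The paper itself does not reproduce a proof of this lemma (it simply cites \cite[Lemma 4.3]{BGR2014a}), but your argument---the $t_0/2 + (t-t_0) + t_0/2$ semigroup split, tensorizing the hypothesis onto the outer factors, rewriting the middle integral as $\langle\phi, P^D_{t-t_0}\phi\rangle$ with $\phi = P^D_{t_0/2}\mathbf{1}$, expanding spectrally and keeping only the ground state, and finally bounding $\int_D\varphi_1^D$ from below by combining $\|\varphi_1^D\|_\infty \ge |D|^{-1/2}$ with the ultracontractive estimate $\|\varphi_1^D\|_\infty \le e^{\lambda_1^D t_0/2}p_{t_0/2}(0)\|\varphi_1^D\|_1$---is the standard one and matches the cited proof; every step lands exactly on the claimed constant.
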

	
	Now, we are in a position to prove Theorem \ref{hk_kula2}. 
		
		\begin{proof} [Proof of Theorem \ref{hk_kula2}]
		Fix $R > 0$. In the whole proof we understand that all inequalities hold for all $x, y \in B_R$.
		Observe that $\psi \in \WLSC{\la}{\frac1{2R}}{\underline{C}_{2R}}$. Hence, using Lemma \ref{hk_kula1}
		for $D=B_R$,  we find a constant $c_1=c_1(d, \la)$ such that for any $t>0$,
		\begin{eqnarray}
		\nonumber
		p_{B_R}(t, x, y)
    &\le&  \frac{c_1} {\underline{C}_{2R}^{2(1+d)/\la+1}}  \left(\frac{V(\delta_{B_R}(x))}{\sqrt{t} }\wedge1 \right) \left(\frac{V(\delta_{B_R}(y))}{\sqrt{t}}\wedge1 \right) \\
   \label{UB100}
    && \times \left(p_{t/2}(0) \wedge \frac {t}{V^2(|x- y|) |x- y|^d}\right).
    \end{eqnarray}
		Let  $t_0= V^2(R)$, $\lambda_1 = \lambda_1^{B(0,R)}$. By Lemma \ref{eigenApprox},  $\lambda_1t_0\le c(d)$. Moreover, for $t\le t_0$ we have the estimate 
		$$\left(p_{t/2}(0) \wedge \frac {t}{V^2(|x- y|) |x- y|^d}\right)\le \frac 1{\tilde{C}_{2R}} p_t( x-y).$$ Consequently, applying  (\ref{UB100}), for $t\le t_0$ we obtain 
		
		\begin{eqnarray}
		\nonumber
		p_{B_R}(t, x, y)
    &\le&  \frac{c_2} {\tilde{C}_{2R}\underline{C}_{2R}^{2(1+d)/\la+1}}  \left(\frac{V(\delta_{B_R}(x))}{\sqrt{t} }\wedge1 \right) \left(\frac{V(\delta_{B_R}(y))}{\sqrt{t}}\wedge1 \right) \\
   \label{UB105}
    && \times p_t( x-y)e^{-\lambda_1t}
    \end{eqnarray}
with $c_2 =  c_2( d, \la)$.
		

	Next, we deal with $t\ge t_0$. 
	 By   Corollary  \ref{p_t0} with  $r = 1/4$, 
	$ p_{t_0/4}(0)\le \frac{c_3}{ \underline{C}^{1+d/\la}_{R}}  \frac 1{R^d}$ where $c_3 =  c_3( d, \la)$. Moreover, subadditivity of $V$  yields  $\frac 1{R^d}\le\frac { 2^{d+2}}{C_{2R}} p_{V^2(R)}( 2R) \le \frac { 2^{d+2}}{C_{2R}}p_{V^2(R)}(x-y)$.
	 Applying  Lemma \ref{UB10} for $D=B_R$ with $t_0= V^2(R)$, we obtain for  $t\ge t_0$, 
$$	p_{{B_R}}\left(t,x,y\right) \le \frac{c_4}{C_{2R} \underline{C}^{2(1+d/\la)}_{R}} p_{V^2(R)}( x-y) \, \p^x\left(\tau_{D}>\frac{t_0}4\right) \p^y\left(\tau_{D}>\frac{t_0}4\right) e^{-\lambda_1t}$$
	where $c_4 =  c_4( d, \la)$. 
	 We also have (see \cite[Theorem 3.1]{KMR2013}) 
\begin{equation}\label{tau_estimate}\p^x(\tau_{B_R}>t)\le c_5\left(\frac{V(\delta_{B_R}(x))}{\sqrt{t}}\wedge1\right)\end{equation}
for an absolute constant  $c_5$. 
		Hence,   for $t\ge t_0 = V^2(R)$, we arrive at 
	\begin{equation}\label{UB101}	p_{{B_R}}\left(t,x,y\right) \le \frac{c_6}{C_{2R} \underline{C}^{2(1+d/\la)}_{R}} p_{V^2(R)}( x-y) \, \left(\frac{V(\delta_{B_R}(x))}{\sqrt{t}\wedge V(R)}\wedge1\right) \left(\frac{V(\delta_{B_R}(y))}{\sqrt{t}\wedge V(R)}\wedge1\right) e^{-\lambda_1t}\end{equation}
with  $c_6 =  c_6( d, \la)$.

Therefore, by  (\ref{UB105}) and (\ref{UB101}), we can find ${\mathcal A}_R = \frac{c_6}{C_{2R} \underline{C}^{2(1+d/\la)}_{R}}+  \frac{c_2} {\tilde{C}_{2R}\underline{C}_{2R}^{2(1+d)/\la+1}} $ such that   for all $t>0$ 
	$$	p_{{B_R}}\left(t,x,y\right) \le{\mathcal A}_R p_{t\wedge V^2(R)}(x- y) \, \left(\frac{V(\delta_{B_R}(x))}{\sqrt{t}\wedge V(R)}\wedge1\right) \left(\frac{V(\delta_{B_R}(y))}{\sqrt{t}\wedge V(R)}\wedge1\right) e^{-\lambda_1t},$$
	which is the desired uniform upper bound, since finite  ${\mathcal A}_R$ is nondecreasing with $R$.
	Integrating the above bound with respect to $y$ over  $B_R$ we obtain
	
	$$\p^x(\tau_{B_R}>t)\le {\mathcal A}_R  \left(\frac{V(\delta_{B_R}(x))}{\sqrt{t}\wedge V(R)}\wedge1\right) e^{-\lambda_1t}.$$

Now, we  deal with the lower bound. By Lemma \ref{lower_hk_estimate1}	there are $c_7=c_7(d)<1, c_8=c_8(d,\la) $ such that 

\begin{eqnarray*}
p_{{B_R}}(t,x,y) &\ge&  \frac{ c_8 (\underline{C}_R)^{1+d/\la}(C_R)^{{{9}+d/\la}}}{H_R^2}    \left(\frac{V(\delta_{B_R}(x))}{\sqrt{t}}\wedge1\right)\left(\frac{V(\delta_{B_R}(y))}{\sqrt{t}}\wedge1\right) \\
&& \times		\big( p_{t/2}(0)\wedge t\,\nu(2|x-y|)\big),
\end{eqnarray*}
provided  $0<t\le  c_7 V^2(R)C_R $. 

Next, using subadditivity  of $V$, we observe that  
 $$\nu(2|x-y|) \ge  \frac {C_{4R}^*}{2^{d+2} V^2(|x-y|)|x-y|^d}.$$
 Therefore, by the estimate (see Lemma \ref{upperptr}),  
$$p_{t/2}(0)\wedge \frac t{ V^2(|x-y|)|x-y|^d}\ge c_9 {p}_t(x-y) $$
with $c_9=c_9(d) $, we obtain

\begin{eqnarray} 
\nonumber
p_{{B_R}}(t,x,y) &\ge& \frac{ c_8 (\underline{C}_R)^{1+d/\la}(C_R)^{{{9}+d/\la}}}{H_R^2}c_9 
\left(1\wedge\frac {C_{4R}^*}{2^{d+2}}\right)  \left(\frac{V(\delta_{B_R}(x))}{\sqrt{t}}\wedge1\right)\\
&& \times \left(\frac{V(\delta_{B_R}(y))}{\sqrt{t}}\wedge1\right) p_{t}(x-y),
\label{LB100}
\end{eqnarray} 
provided  $0<t\le  c_7 V^2(R)C_R$. Applying (\ref{tau_estimate}) 
we have 
$$ p_{{B_R}}(t,x,y)\ge  \frac{ c_8 (\underline{C}_R)^{1+d/\la}(C_R)^{{{9}+d/\la}}}{(c_5\,H_R)^2}c_9 
\left(1\wedge\frac {C_{4R}^*}{2^{d+2}}\right)    \p^x(\tau_{{B_R}}>t) \p^y(\tau_{{B_R}}>t)
		 p_{t}(x-y).$$
In particular, taking $t_0= c_7 V^2(R)C_R\le V^2(R)$, we have 
\begin{eqnarray*}
p_{{B_R}}(t_0/2,x,y) &\ge&  \frac{ c_8 (\underline{C}_R)^{1+d/\la}(C_R)^{{{9}+d/\la}}}{(c_5\,H_R)^2}c_9 
\left(1\wedge\frac {C_{4R}^*}{2^{d+2}}\right)  p_{t_0/2}(2R)\\
&& \times   \p^x(\tau_{{B_R}}>t_0/2) \p^y(\tau_{{B_R}}>t_0/2).
\end{eqnarray*}
	To extend the estimate (\ref{LB100}) to $t\ge t_0$ we apply  Lemma \ref{LB10} with 
	$$c^*= \frac{ c_8 (\underline{C}_R)^{1+d/\la}(C_R)^{{{9}+d/\la}}}{(c_5\,H_R)^2}c_9
\left(1\wedge\frac {C_{4R}^*}{2^{d+2}}\right)  p_{t_0/2}(2R).$$
Hence, for $t\ge t_0$ we have 
\begin{equation}\label{LB2}
p_{{B_R}}(t,x,y) \ge \left(\frac{c^*}{\sqrt{|{B_R}|}p_{t_0/2}(0)}\right)^2e^{-\lambda_1t_0} \p^x
\left(\tau_{{B_R}}>\frac{t_0}{2}\right) \p^y\left(\tau_{{B_R}}>\frac{t_0}{2}\right)
  e^{-\lambda_1t}.
\end{equation}
	Next, by Lemma \ref{GRTestimates}, there are constants $c_{10}=c_{10}(d), c_{11}=c_{11}(d)$ such that 
	
$$p_{t_0/2}(2R) \ge c_{10} t_0 \nu(2R) \exp\left(\frac{-c_{11} t_0}{V^2(2R)}\right).$$
	Since $\nu(2R)\ge\frac{C^*_{2R}}{(2R)^dV^2(2R)}$ by monotonicity and subadditivity of $V$,  
	$$ 
	\frac{c_7 C_R}{4} = \frac{t_0}{4 V^2(R)} \le \frac {t_0}{V^2(2R)}\le   {c_7 C_{R}}\le 1,
	$$
	so there are $c_{12}= c_{12}(d)$
	such that
	
	$$p_{t_0/2}(2R)\ge c_{12}\frac {C^*_{2R} C_R}{R^d} . $$	
By   Corolarry  \ref{p_t0} with  $r = (1/2)c_7 C_{R}$, 
	  $ p_{t_0/2}(0)\le \frac{c_{13}}{(C_R\underline{C}_R)^{1+d/\la}}  \frac 1{R^d}$ with $c_{13} =  c_{13}( d, \la)$.
	This implies that 
	
	\begin{equation}\label{ratio} \left(\frac{ p_{t_0/2}(2R)}{\sqrt{|{B_R}|}p_{t_0/2}(0)}\right)^2
	\ge c_{14}\left[(C_R\underline{C}_R)^{1+d/\la}{C^*_{2R}} C_R\right]^2\frac1{R^d}	\end{equation}
	with $c_{14} =  c_{14}( d, \la)$.
	
	On the other hand for all $ t\ge t_0$, 
	\begin{equation}\label{ratio1}p_{t\wedge V^2(R)}( x-y)\le p_{t_0/2}( 0) \le \frac{c_{13}}{( C_R\underline{C}_R)^{1+d/\la}}  \frac 1{R^d}\end{equation}. 

	Combining  (\ref{ratio}) and (\ref{ratio1})  we obtain for $ t\ge t_0$,
	
	\begin{equation}\label{ratio2}\left(\frac{ p_{t_0/2}(2R)}{\sqrt{|{B_R}|}p_{t_0/2}(0)}\right)^2\ge c_{15}{(C^*_{2R} C_R)^2}(C_{R}\underline{C}_R)^{3+3d/\la}p_{t\wedge V^2(R)}( x-y)	\end{equation}
	with $c_{15} =  c_{15}( d, \la)$.

Note also that $\lambda_1 t_0 = \lambda_1 V^2(R) c_7 C_R \le c$, where $c = c(d,\la)$. Hence, by (\ref{LB2}) and (\ref{ratio2}),  for $t\ge t_0$ we have 
\begin{eqnarray}
\nonumber
p_{{B_R}}(t,x,y) &\ge& \frac{ c_{16}(C^*_{2R})^2\underline{C}_R^{5+5d/\la}(C_R)^{{{23}+5d/\la}}}{H_R^4} 
\left(1\wedge\frac {C_{4R}^*}{2^{d+2}}\right)^2 \\
\label{LB101}
&& \times  \p^x\left(\tau_{{B_R}}>\frac{t_0}{2}\right) \p^y\left(\tau_{{B_R}}>\frac{t_0}{2}\right) e^{-\lambda_1t}p_{t\wedge V^2(R)}(x-y)
\end{eqnarray}	
with $c_{16} =  c_{16}( d, \la)$.
Due to Lemma \ref{L5a1}, and since $\mathcal{I}_R\ge c(d) C^*_{2R}$, we have the lower bound 
$$\p^x(\tau_{{B_R}}>t)\ge c_{17}\frac{C^*_{2R}}{H_R}\left(\frac{V(\delta_{B_R}(x))}{\sqrt{t}}\wedge1\right)$$
for $t\le c_{18}V^2(R)$ with $c_{17}= c_{17}(d)$ and $c_{18}= c_{18}(d)$. Recall that $t_0= c_7 V^2(R)C_R\le c_7 V^2(R)$. We may assume that the constant $c_7$ is smaller than $c_{18}$. Hence 
$$\p^x(\tau_{{B_R}}>t_0)\ge c_{17}\frac{C^*_{2R}}{H_R}\left(\frac{V(\delta_{B_R}(x))}{\sqrt{t_0}}\wedge1\right),$$
which combined with  (\ref{LB101}) yields for $t\ge t_0$, 
\begin{eqnarray}	
\nonumber
p_{{B_R}}\left(t,x,y\right)&\ge& \frac{ c_{19}(C^*_{2R})^{4}\underline{C}_R^{5+5d/\la}(C_R)^{{{23}+5d/\la}}}{H_R^6} 
\left(1\wedge\frac {C_{4R}^*}{2^{d+2}}\right)^2 p_{t\wedge V^2(R)}( x- y)\\
&& \times  \left(\frac{V(\delta_{B_R}(x))}{\sqrt{t}\wedge V(R)}\wedge1\right) \left(\frac{V(\delta_{B_R}(y))}{\sqrt{t}\wedge V(R)}\wedge1\right) e^{-\lambda_1t}\label{LB20}\end{eqnarray}
with $c_{19} =  c_{19}( d, \la)$.
If we set
$${\mathcal A}^*_R= \left(\frac{(\underline{C}_R)^{1+d/\la}(C_R)^{{{9}+d/\la}}}{H_R^2} 
\wedge  \frac{(C^*_{2R})^{4} \underline{C}_R^{5+5d/\la}(C_R)^{{{23}+5d/\la}}}{H_R^6} \right) 
\left(1\wedge\frac {C_{4R}^*}{2^{d+2}}\right)^2,$$
then combining (\ref{LB20})  with (\ref{LB100}) there is $c_{20}= c_{20}( d, \la)$ such that  

\begin{equation}	\label{last} p_{{B_R}}\left(t,x,y\right)\ge c_{20}{\mathcal A}^*_R\,  p_{t\wedge V^2(R)}( x- y)  
\left(\frac{V(\delta_{B_R}(x))}{\sqrt{t}\wedge V(R)}\wedge1\right) \left(\frac{V(\delta_{B_R}(y))}{\sqrt{t}\wedge V(R)}\wedge1\right) e^{-\lambda_1t},\end{equation}
for $t>0$.
 It is clear that ${\mathcal A}^*_R$ is nonincreasing in $R$, 
so the proof of the lower bound of $p_{{B_R}}$ is completed. 

To finish the proof we need to show a lower bound for $\p^x(\tau_{{B_R}}>t)$. By Lemma \ref{L5a1} it is clear that it is enough to consider $t\ge c_{21} V^2(R)$ for some $c_{21} = c_{21}(d) < 1$. Note that $t \wedge V^2(R) = c_{22} V^2(R)$ for some $c_{21} \le c_{22} \le 1$. We have

	$$p_{t\wedge V^2(R)}( x- y) \ge p_{c_{22} V^2(R)}(2R) \ge \frac{C_{2R} c_{22} V^2(R)}{V^2(2R) (2R)^d} \ge \frac{c_{21} C_{2R}}{4 (2R)^d}.  $$  Moreover for $|y|\le R/2$ and $t\ge t_0$,
$$\left(\frac{V(\delta_{B_R}(y))}{\sqrt{t}\wedge V(R)}\wedge1\right)\ge 1/2.$$ 
Integrating $p_{{B_R}}\left(t,x,y\right)$ over $ B_{R/2}$ with respect to $y$ and applying (\ref{last}) provides the desired bound for $\p^x(\tau_{{B_R}}>t)$. 
\end{proof}

\vskip 10pt

{\bf{ Acknowledgements.}} We thank prof. J. Zabczyk for communicating to us the problem of gradient estimates of the killed semigroup for jump processes.

\end{document}